\def\b{\mathbb }
\def\phi{\varphi }
\def\epsilon{\varepsilon}
\theoremstyle{plain}
\newtheorem{theorem}{Theorem}[section]
\newtheorem{corollary}[theorem]{Corollary}
\newtheorem{lemma}[theorem]{Lemma}
\newtheorem{proposition}[theorem]{Proposition}
\theoremstyle{definition}
\newtheorem{definition}[theorem]{Definition}
\newtheorem{remark}[theorem]{Remark}
\newtheorem{remarks}[theorem]{Remarks}
\newtheorem{hypergroups}[theorem]{Some facts and notions on commutative
  hypergroups}
\newtheorem{jack}[theorem]{Spherical polynomials}
\newtheorem{Laguerre}[theorem]{Multivariate Laguerre polynomials}
\newtheorem{Bessel}[theorem]{Multivariate Bessel functions}
\numberwithin{equation}{section}
\begin{document}

\title{Multidimensional Heisenberg convolutions and product formulas for
multivariate  Laguerre polynomials}

\author{
Michael Voit\\
Fakult\"at Mathematik, Technische Universit\"at Dortmund\\
          Vogelpothsweg 87\\
          D-44221 Dortmund, Germany\\
e-mail: michael.voit@math.uni-dortmund.de}

\date{}
\maketitle


\renewcommand{\thefootnote}{}

\footnote{2010 \emph{Mathematics Subject Classification}: Primary 43A62;
  Secondary 33C67. 33C52, 43A90, 43A20}

\footnote{\emph{Key words and phrases}: Heisenberg convolution, matrix cones, Weyl chambers,
  multivariate Laguerre polynomials, multivariate Bessel functions,
product formulas, hypergroups, hypergroup characters.  }

\renewcommand{\thefootnote}{\arabic{footnote}}
\setcounter{footnote}{0}


\begin{abstract}
Let $p,q$ positive integers. The groups $U_p(\b C)$ and $U_p(\b C)\times U_q(\b C) $ act 
on the  Heisenberg group $H_{p,q}:=M_{p,q}(\b C)\times \b R$ canonically as
groups of automorphisms where $M_{p,q}(\b C)$ is the vector space of all
complex $p\times q$-matrices. 
The associated orbit spaces may be identified with $\Pi_q\times \b R$ and  $\Xi_q\times \b R$ respectively 
with the cone $\Pi_q$ of positive semidefinite matrices and the Weyl 
chamber $\Xi_q=\{x\in\b R^q:\> x_1\ge\ldots\ge x_q\ge 0\}$. 

In this paper we compute the associated convolutions on $\Pi_q\times \b R$ and
$\Xi_q\times \b R$
 explicitly depending on $p$.
Moreover, we extend these convolutions  by analytic continuation to series of
convolution 
structures for arbitrary parameters $p\ge 2q-1$. 
This leads for $q\ge 2$ to  continuous series of noncommutative hypergroups on  $\Pi_q\times \b R$ and
 commutative hypergroups on $\Xi_q\times \b R$. In the latter case, we describe the dual space in terms 
of multivariate Laguerre and Bessel functions on  $\Pi_q$ and $\Xi_q$. In
particular,
 we give a non-positive product formula for these 
Laguerre functions on $\Xi_q$.  

The paper extends the known case $q=1$ due to Koornwinder, Trimeche, and others as well as the group case with
 integers $p$ due to Faraut, Benson, Jenkins, Ratcliff, and others. Moreover, it is  closely related
 to product formulas for multivariate
Bessel and other  hypergeometric functions of R\"osler.
\end{abstract}

\section{Introduction}
For positive integers $p\ge q$ consider the vector space $M_{p,q}$ of all
$p\times q$ matrices over $\b C$. Consider the associated Heisenberg group
 $H_{p,q}:=M_{p,q}\times \b R$ with the product
$$(x,a)\cdot (y,b)=(x+y, a+b- {\rm Im\>} tr(x^*y))$$
where $tr$ denotes the trace of the $q\times q$ matrix $x^*y$.
Clearly, the unitary groups $K:=U_p:=U_p(\b C)$ and $K:=U_p\times U_q$ act on $H_{p,q}$ via
$$u(x,a):= (ux,a) \quad\quad\text{ and} \quad (u,v)(x,a):= (uxv^*,a)$$
for $u\in U_p$,      $v\in U_q$,  $x\in M_{p,q}$, and
$a\in\b R$
respectively as  groups of automorphisms. 
The associated orbit spaces may be identified with $\Pi_q\times \b R$ and  $\Xi_q\times \b R$ respectively 
with the cone $\Pi_q$ of complex positive semidefinite matrices and the Weyl 
chamber $\Xi_q=\{x\in\b R^q:\> x_1\ge\ldots\ge x_q\ge 0\}$ of type $B$. It is well-known that 
the Banach-$*$- algebras $M_b(H_{p,q}, K)$ of all $K$-invariant bounded
 regular Borel measures with the convolution as multiplication 
 are commutative always for 
 $K:=U_p\times U_q$, and for $K:=U_p$ for $q=1$ only (in which case the cases
  $K:=U_p:=U_p(\b C)$ and $K:=U_p\times U_1$ lead to the same result).
 Moreover, in these Gelfand pair cases, the associated spherical functions are well-known 
 in terms of multivariate Laguerre and Bessel functions;
 we refer to \cite{BJR1}, \cite{BJR2}, \cite{BJRW}, \cite{C}, 
\cite{F}, \cite{Kac}, \cite{W} and references there for this topic. 

In this paper we compute the associated orbit convolutions on $\Pi_q\times \b R$
 and  $\Xi_q\times \b R$ explicitly depending on 
the dimension parameter $p$. This computation is similar to that of R\"osler \cite{R1} 
where the action of  $U_p$ and $U_p\times U_q$  on $M_{p,q}$ is considered
 for the fields $\b R,\b C,\b H$, and where multivariate Bessel functions  appear as spherical functions. 
Moreover, following \cite{R1}, we extend these convolutions 
 by analytic continuation to series of convolution structures
 for arbitrary parameters $p\ge 2q-1$ by using the famous theorem of Carleson. 
This extension leads for $q\ge 2$ to  continuous series of
 noncommutative hypergroups on  $\Pi_q\times \b R$ and
  continuous series of
 commutative hypergroups on $\Xi_q\times \b R$. In the latter case,
 we shall describe the dual spaces in terms 
of multivariate Laguerre and Bessel functions on  $\Pi_q$ and $\Xi_q$.
 Moreover, we determine further data of these hypergroups 
like the Haar measures, the Plancherel measures, automorphisms  and subhypergroups.

The main results will be as follows: 
For the (for $q\ge 2$ noncommutative) hypergroup structures on
 $\Pi_q\times\b R$ we shall derive in Section 2:

\begin{theorem}\label{main-thm-1} Let $q\ge1$ be an integer and  $p\in]2q-1,\infty[$.
Define a convolution of point measures on  $\Pi_q\times\b R$ by
\begin{align}
(&\delta_{(r,a)} *_{p,q} \delta_{(s,b)})(f)=\\ 
&= \kappa_{p,q}\int_{B_q}
f\bigl(\sqrt{r^2 + s^2 + rws + (rws)^*},a+b- {\rm Im\>} tr(rws)\bigr)\notag\\
&\quad\quad\quad\quad\quad
\cdot
\Delta(I_q-w^*w)^{p-2q}\, dw
\notag
\end{align}
for $f\in C_b(\Pi_q\times\b R)$, $r,s\in \Pi_q$, $a,b\in\b R$, where
$$B_q:= \{ w\in M_{q,q}:\> w^*w<I_q, \> \text{i.e.,} \> I_q- w^*w \> \text{is
  positive definite}\},$$ 
$\Delta$ is the determinant of a $\>  q\times q$ matrix,
and $\kappa_{p,q}>0$ is a suitable constant.
Then this formula
establishes by   unique bilinear, weakly continuous extension an associative convolution on
 $M_b(\Pi_q\times \b R)$. More precisely, $(\Pi_q\times \b R, *_{p,q})$ is a
hypergroup in the sense of Jewett (see \cite{BH}, \cite{J}) 
with $(0,0)$ as identity and with the involution
 $\overline{(r,a)}:= (r,-a)$. Moreover,
\[ \omega_{p,q}(f) = \int_{\Pi_q\times \b R}
f(\sqrt{r}, a) \Delta(r)^{p-q} dr \> da\]
defines a left and right Haar measure.
\end{theorem}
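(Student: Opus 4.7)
The plan is to reduce the theorem, following R\"osler~\cite{R1}, to two steps: first verify everything for integer $p\ge 2q$ via the geometric realization as an orbit hypergroup, and then extend analytically in $p$ via Carlson's theorem on the right half-plane $\{\Re p > 2q-1\}$.

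For integer $p\ge 2q$, the compact group $U_p$ acts on $H_{p,q}$ by automorphisms, so the orbit space $H_{p,q}/U_p$, identified with $\Pi_q\times\b R$ via $[(x,a)]\leftrightarrow(\sqrt{x^*x},a)$, inherits the canonical orbit convolution and is thereby a hypergroup in the sense of Jewett with the claimed identity and involution. To match this with formula~(1.1), I would fix representatives $x=\binom{r}{0}$ and let $y$ range over the $U_p$-orbit of $\binom{s}{0}$. Parametrizing this orbit by the upper $q\times q$ block $w$ of a section of $U_p/U_{p-q}$, a classical Siegel integration formula (as used in \cite{F,R1}) yields that $w$ is distributed on $B_q$ with density proportional to $\Delta(I_q-w^*w)^{p-2q}$. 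After computing $(x+y)^*(x+y)=r^2+s^2+rws+(rws)^*$ and $\mathrm{Im}\,\mathrm{tr}(x^*y)=\mathrm{Im}\,\mathrm{tr}(rws)$, one recovers (1.1), with $\kappa_{p,q}$ identified as the reciprocal of the total Siegel integral $\int_{B_q}\Delta(I_q-w^*w)^{p-2q}\,dw$.

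For real $p\in(2q-1,\infty)$, formula (1.1) already defines a probability measure $\delta_{(r,a)}*_{p,q}\delta_{(s,b)}$ depending weakly continuously on $(r,a),(s,b)$; the bilinear weak-$*$ extension to $M_b(\Pi_q\times\b R)$, the identity role of $(0,0)$, the involution relation $\overline{(r,a)}=(r,-a)$, and the support axiom (that $(0,0)$ lies in the support iff $(s,b)=(r,-a)$) can all be read directly off the integrand. The genuine obstacle is associativity. For arbitrary $(r,a),(s,b),(t,c)$ and $f\in C_c(\Pi_q\times\b R)$, I would set
\[F(z):=\bigl((\delta_{(r,a)}*_{z,q}\delta_{(s,b)})*_{z,q}\delta_{(t,c)}\bigr)(f)-\bigl(\delta_{(r,a)}*_{z,q}(\delta_{(s,b)}*_{z,q}\delta_{(t,c)})\bigr)(f)\]
and treat $F$ as a complex function of $z$ on $\{\Re z>2q-1\}$. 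By the integer case, $F(p)=0$ for every integer $p\ge 2q$. If I can verify holomorphy on this half-plane, boundedness along any vertical line, and exponential growth of order strictly less than $\pi$, Carlson's theorem forces $F\equiv 0$ and yields associativity on a weak-$*$ dense set. Establishing these analytic estimates uniformly in $(r,s,t,a,b,c)$, while controlling the normalization $\kappa_{z,q}$ near the boundary $\Re z=2q-1$, is the technical heart of the proof; the factor $\Delta(I_q-w^*w)^{z-2q}$ is bounded by $1$ on $B_q$, which helps, but the iterated Siegel integrals and the ratio of Gamma-type normalizations still require careful bookkeeping.

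Finally, the Haar measure claim amounts to $\int f\, d(\omega_{p,q}*\delta_{(s,b)})=\omega_{p,q}(f)$ for every $f\in C_c$ and every $(s,b)$, and analogously on the right. Substituting (1.1) together with the definition of $\omega_{p,q}$, the Jacobian of the substitution $r\mapsto\sqrt{r^2+s^2+rws+(rws)^*}$ combines with the weight $\Delta(r)^{p-q}$ and the Siegel weight to reduce the identity, in the integer case, to translation invariance of Lebesgue measure on $M_{p,q}\times\b R$ under the Heisenberg product; the general case $p>2q-1$ then follows by the same Carlson continuation argument applied to the difference of the two sides. Unimodularity is automatic because $\omega_{p,q}$ is manifestly invariant under the involution $(r,a)\mapsto(r,-a)$.
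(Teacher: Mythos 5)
Your overall strategy coincides with the paper's: realize the integer case $p\ge 2q$ as an orbit hypergroup of $U_p$ acting on $H_{p,q}$ (the reduction of the $U_p$-integral to the $B_q$-integral with density $\Delta(I_q-w^*w)^{p-2q}$ is exactly the truncation step taken from R\"osler), and then continue analytically in $p$ via Carlson's theorem, using $|\kappa_{p,q}|=O(|p|^{q^2})$ and $|\Delta(I_q-w^*w)^{p-2q}|\le 1$ for associativity. That part of your proposal is sound and matches the paper; your worry about needing estimates ``uniformly'' in $(r,s,t,a,b,c)$ is unnecessary, since Carlson is applied for fixed data and fixed $f$.

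The genuine gap is in the Haar measure step. You assert that the invariance identity for noninteger $p$ ``follows by the same Carlson continuation argument applied to the difference of the two sides.'' It does not, as stated: with $f\in C_c$ fixed, the left-hand integral
\[
\int_{\Pi_q}\int_{\b R}(\delta_{(r,a)}*_{p,q}\delta_{(\sqrt s,b)})(f)\,\Delta(s)^{p-q}\,ds\,db
\]
is only bounded by $C\|f\|_\infty\,|\kappa_{p,q}|\,M^{\mathrm{Re}(p-q)}$, where $M$ is the supremum of $\Delta(s)$ over the set of $s$ for which the convolution support meets $\mathrm{supp}\,f$. This is exponential growth in $\mathrm{Re}\,p$ with rate $\log M$, and Carlson's theorem requires rate strictly below $\pi$; for large $r$ or large $\mathrm{supp}\,f$ the hypothesis simply fails. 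The paper closes this by a two-step device you are missing: first it proves the identity only when $\|r\|$ and $\mathrm{supp}\,f$ lie in a small neighborhood of $(0,0)$ (so that $M$ is small enough for Carlson, using the support bound of Remark \ref{support-remark}), and then it transfers the identity to arbitrary $r$ and $f$ by means of the dilations $T_{I_q,\lambda}(r,a)=(\lambda r,\lambda^2 a)$, which are shown beforehand (Lemma \ref{automorph}, by a direct change of variables valid for every $p>2q-1$, not by continuation) to be hypergroup automorphisms, so that the invariance identity for $(\lambda r, f_\lambda)$ is equivalent to that for $(r,f)$. Without this localization-plus-rescaling argument (or some substitute controlling the exponential growth), your continuation of the Haar property is incomplete. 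A smaller point in the same spirit: the support-related hypergroup axioms (including $(0,0)\in\mathrm{supp}(\delta_{(r,a)}*\delta_{(s,b)})$ iff $(s,b)=(r,-a)$, and the continuity of the support map) are not quite ``read directly off the integrand''; the paper inherits them from the group case by observing that the support of $\delta_{(r,a)}*_{p,q}\delta_{(s,b)}$ is independent of $p\in\,]2q-1,\infty[$, and you should either argue this $p$-independence or verify the boundary cases (e.g. $w=-I_q\in\partial B_q$) explicitly.
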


For the commutative hypergroup structures on $\Xi_q\times\b R$ we shall derive in Section 3:

\begin{theorem}\label{main-thm-2} 
 Let $q\ge1$ be an integer and $p\in]2q-1,\infty[$. 
Then  $\Xi_q\times\b R$ carries a commutative hypergroup structure with convolution
\begin{align}
(&\delta_{(\xi,a)} \circ_{p,q} \delta_{(\eta,b)})(f)\\ 
&= \kappa_{p,q}\int_{B_q}\int_{U_q}
f\Bigl(\sigma(\sqrt{\xi^2 + u\eta^2 u^* + \xi w u\eta u^*+  u\eta
  u^*w^*\xi}),\notag \\ 
&\quad\quad\quad\quad\quad\quad\quad\quad
a+b- {\rm Im\>} tr(\xi w u\eta u^*)\Bigr)
\cdot
\Delta(I_q-w^*w)^{p-2q}\, du\, dw
\notag
\end{align}
for $f\in C_b(\Xi_q\times\b R)$, $(\xi,a),(\eta,b)\in \Xi_q\times\b R$
 where $du$ means integration with respect to the normalized Haar measure on
 $U_q$ and $\xi\in\Xi$ is identified with the associated diagonal matrix in $\Pi_q$.
The neutral element of this hypergroup is given by $(0,0)\in\Xi_q\times\b R$, and the involution
 by
$\overline{(\xi,a)} := (\xi,-a)$.
Moreover, a Haar measure 
is given by $d\tilde\omega_{p,q}(\xi,a):=  h_{p,q}(\xi)\> d\xi\> da$
 with the Lebesgue density
\begin{equation}
h_{p,q}(\xi):=\prod_{i=1}^q \xi_i^{2p-2q + 1} \prod_{i<j} (\xi_i^2-\xi_j^2)^2.
\end{equation}
\end{theorem}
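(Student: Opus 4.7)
The plan is to realize $(\Xi_q\times\b R,\circ_{p,q})$ as the orbit hypergroup obtained from $(\Pi_q\times\b R,*_{p,q})$ of Theorem~\ref{main-thm-1} under the action of the compact group $U_q$ by conjugation, $v\cdot(r,a):=(vrv^*,a)$. The first task is to verify that this action is by hypergroup automorphisms of $*_{p,q}$. Substituting $r\mapsto vrv^*$, $s\mapsto vsv^*$ in formula (1.1) and performing the change of variables $w\mapsto v^*wv$ on $B_q$, one checks that the radicand transforms to its conjugate by $v$ (so that its positive square root does likewise), ${\rm Im\>}{\rm tr}(rws)$ is invariant by the cyclicity of the trace, $\Delta(I_q-w^*w)^{p-2q}$ is unchanged since $v$ is unitary, and Lebesgue measure $dw$ is preserved. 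The spectral theorem together with the chamber ordering $\xi_1\ge\ldots\ge\xi_q\ge 0$ identifies $\Pi_q/U_q$ with $\Xi_q$ via the ordered eigenvalue map $\sigma$, so $(\Pi_q\times\b R)/U_q\cong\Xi_q\times\b R$.

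Once the $U_q$-action is known to be by automorphisms, the standard orbit hypergroup construction of Jewett (cf.\ \cite{J}, \cite{BH}) equips $\Xi_q\times\b R$ with a hypergroup structure whose convolution is
\[
(\delta_{(\xi,a)}\circ_{p,q}\delta_{(\eta,b)})(f)\;=\;\int_{U_q}(\delta_{(\xi,a)}*_{p,q}\delta_{(u\eta u^*,b)})(f\circ\pi)\,du,
\]
where $\pi$ is the orbit projection, $du$ is normalized Haar measure on $U_q$, and $\xi,\eta\in\Xi_q$ are identified with their associated diagonal matrices in $\Pi_q$. Plugging the explicit formula (1.1) into this expression yields exactly (1.2) in the theorem statement, with the outer $\sigma$ produced by $\pi$. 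The neutral element $(0,0)$ and the involution $\overline{(\xi,a)}=(\xi,-a)$ descend automatically from Theorem~\ref{main-thm-1} since both are fixed by the $U_q$-action.

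Commutativity needs an additional argument, since orbit hypergroups are not commutative in general. For integer $p$ it holds because $(\Xi_q\times\b R,\circ_{p,q})$ then coincides with the double-coset hypergroup of the Gelfand pair $(H_{p,q},U_p\times U_q)$ recalled in the introduction. For general $p\in\,]2q-1,\infty[$ it extends by the same analytic-continuation-plus-Carleson argument already used for $*_{p,q}$ and in R\"osler~\cite{R1}: for fixed $f\in C_c(\Xi_q\times\b R)$ and fixed $(\xi,a),(\eta,b)$, the difference $(\delta_{(\xi,a)}\circ_{p,q}\delta_{(\eta,b)})(f)-(\delta_{(\eta,b)}\circ_{p,q}\delta_{(\xi,a)})(f)$ is analytic in $p$ on a suitable right half-plane with admissible growth on vertical strips, vanishes on all admissible integers, and must therefore vanish identically.

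Finally, pushing $\omega_{p,q}$ of Theorem~\ref{main-thm-1} forward through $\pi$ yields a Haar measure $\tilde\omega_{p,q}$ on $\Xi_q\times\b R$. The Weyl integration formula on $\Pi_q$ parametrizes $r=u\Lambda u^*$ with ordered eigenvalues $\lambda_1\ge\cdots\ge\lambda_q\ge 0$ and Jacobian $\prod_{i<j}(\lambda_i-\lambda_j)^2$; the substitution $\lambda_i=\xi_i^2$, dictated by the $\sqrt{r}$ in the definition of $\omega_{p,q}$, contributes an additional factor $2^q\prod_i\xi_i$, and combining with $\Delta(r)^{p-q}=\prod_i\xi_i^{2(p-q)}$ reproduces exactly the Lebesgue density $h_{p,q}(\xi)$ in (1.3) up to a constant. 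I expect the most delicate point to be the commutativity assertion in the non-integer regime: one has to verify carefully that formula (1.2) is holomorphic in $p$ and has the growth bounds required for Carleson's theorem, paralleling what was carried out for $*_{p,q}$ itself.
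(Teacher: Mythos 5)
Your proposal is correct and follows essentially the same route as the paper: realize $(\Xi_q\times\b R,\circ_{p,q})$ as the orbit hypergroup of the $U_q$-action by the automorphisms $T_{u,1}$ on $(\Pi_q\times\b R,*_{p,q})$ via Jewett's orbital-morphism machinery, prove commutativity from the Gelfand pair $((U_p\times U_q)\ltimes H_{p,q},U_p\times U_q)$ for integer $p$ plus Carlson-type analytic continuation, and obtain the Haar measure as the pushforward of $\omega_{p,q}$ computed by the Weyl-type integration formula with $\lambda_i=\xi_i^2$. The only inessential differences are that the paper additionally records a direct substitution proof of commutativity (using $w\mapsto\bar w$, $u\mapsto u^T$, $w\mapsto u^*w^*u$) and delegates the density computation to Theorem 4.1 of R\"osler's paper, which you instead sketch explicitly.
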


Moreover, the dual spaces of these commutative hypergroups, i.e., the sets
 of all bounded continuous multiplicative functions, will be 
described precisely as a Heisenberg fan consisting  of multivariate Laguerre
and Bessel functions which were studied in 
\cite{BF},  \cite{F}, \cite{FK}, \cite{He} and many others.
 As already noticed above, this description is well known for the group 
cases with integer $p$ by \cite{BJRW}, \cite{F}.

In Section 4 we shall use the product formula on  $\Xi_q\times\b R$ of Section
3 
in order to derive a product formula 
for the normalized Laguerre functions
$$\tilde\phi_{\bf m}^p(x):= \frac{l_{\bf m}^p(x^2/2)}{l_{\bf m}^p(0)}=
 e^{-(x_1^2+\ldots+x_q^2)/2} \frac{L_{\bf m}^p(x^2)}{L_{\bf m}^p(0)}
\quad\quad (x\in\Xi_q)$$
for $p> 2q-1$ which are introduced, for instance, in \cite{FK}. 
We shall show that for all partitions ${\bf m}$ and all $\xi,\eta\in\Xi_q$,
\begin{align}
\tilde\phi_{\bf m}^p(\xi)\cdot \tilde\phi_{\bf m}^p(\eta)&= 
\kappa_{p,q}\int_{B_q}\int_{U_q}
\tilde\phi_{\bf m}^p\Bigl(\sigma(\sqrt{\xi^2 + u\eta^2 u^* + \xi w u\eta u^*+  u\eta u^*w^*\xi})\Bigr)
\notag \\ &\quad\quad\quad \quad
\cdot e^{-i\cdot {\rm Im\>} tr(\xi w u\eta u^*)}
\Delta(I_q-w^*w)^{p-2q}\, du\, dw.
\end{align}
For $q=1$, this formula was derived by Koornwinder \cite{Ko} who also gave
another version of this product formula using Bessel functions.

We  here notice that on all three levels discussed above also degenerated
product formulas are
 available for the limit case $p=2q-1$. We do not consider the case $p< 2q-1$.

It is a pleasure to thank Margit R\"osler for many valuable
 discussions about their multivariate Bessel convolutions and 
multivariate special functions.

\section{Heisenberg convolutions associated with matrix cones}

For positive integers $p,q$ consider the vector space $M_{p,q}$ of all
$p\times q$ matrices over $\b C$. Consider the associated Heisenberg group
 $H_{p,q}:=M_{p,q}\times \b R$ with the product
$$(x,a)\cdot (y,b)=(x+y, a+b- {\rm Im\>} tr(x^*y))$$
where $tr$ denotes the trace of the $q\times q$ matrix $x^*y$.
Clearly, the unitary group $U_p:=U_p(\b C)$ acts on $H_{p,q}$ via
$$u(x,a):= (ux,a) \quad\quad \text{ for} \quad u\in U_p,\> x\in M_{p,q}, \>
a\in\b R$$
as a group of automorphisms. We regard $U_p$ as a compact subgroup of the associated
semidirect product
$G_{p,q}:= U_p \ltimes H_{p,q}$ in the natural way and consider the double coset
space $G_{p,q}//U_p$ which may be also regarded as the space of all orbits of
the action of $U_p$ on $H_{p,q}$ in the canonical way. Moreover, using
uniqueness of polar decomposition of $p\times q$ matrices, we see immediately
that we may identify this space of orbits also with the space $\Pi_q\times\b
R$ with
$$   \Pi_q:= \{z\in   M_{q,q} : \> z \>\>\text{ Hermitian and positive
  semidefinite}\}$$
via
$$U_p((x,a)) \quad\simeq\quad (|x|,a),  $$         
where $|x|:=\sqrt{x^*x}\in \Pi_q$ stands for the unique positive semidefinite square root of
$x^*x\in \Pi_q$.

Now consider the Banach-$*$-algebra $M_b(G_{p,q}||U_p)$ of all
$U_p$-biinvariant bounded signed Borel measures on $G_{p,q}$ with the usual
convolution of measures as multiplication. If we extend the canonical
projection $P:G_{p,q}\to G_{p,q}//U_p\simeq \Pi_q\times\b R$ to measures by
taking images of measures w.r.t.~$P$, this extension becomes an isometric
isomorphism  between the Banach spaces $M_b(G_{p,q}||U_p)$ and $M_b(\Pi_q\times\b R)$.
We thus may transfer the  Banach-$*$-algebra structure on  $M_b(G_{p,q}||U_p)$
to $M_b(\Pi_q\times\b R)$ by this isomorphism and obtain a probability
preserving, weakly continuous convolution $*_{p,q}$ on  $M_b(\Pi_q\times\b R)$
in this way. The pair $(\Pi_q\times\b R,*_{p,q})$ forms a so-called
hypergroup; for general details on hypergroups the the construction above via
double cosets and orbits we  refer to  \cite{BH} and
\cite{J}.

Clearly, this Heisenberg-type convolution $*_{p,q}$ on (measures on)
$\Pi_q\times\b R$ is commutative iff so is  $M_b(G_{p,q}||U_p)$ , i.e., iff
 $(G_{p,q},U_p)$ is a Gelfand pair. As Gelfand pairs associated with
Heisenberg groups were classified completely 
 (see  \cite{BJR2},  \cite{C}, \cite{Kac},  \cite{W}), it turns out that the 
 convolution $*_{p,q}$ is commutative precisely for $q=1$. Moreover, for
 $q=1$, the convolutions  $*_{p,q}$ on $\Pi_1\times\b R=[0,\infty[\times\b R$ and
 the associated hypergroup structures were investigated by several
authors; see \cite{Ko},  and the monographs \cite{T}, \cite{BH} as well as
references therein.

We next compute the convolution  $*_{p,q}$ for arbitrary positive integers
$q$ under the technical restriction $p\ge 2q$ which will become clear below.
We do this by using the approach for the Gelfand pair $(U_p \ltimes M_{p,q},U_p)$
in   \cite{R1} where the double coset space  $(U_p \ltimes M_{p,q}//U_p)$ is
identified with  $\Pi_q$, and where the same restriction appears. The
computation 
here is only slightly more involved,
and we obtain:

\begin{proposition}\label{computation-convo} Let  $p\ge 2q\ge1$ be integers.
 Then the convolution  $*_{p,q}$
of point measures is given by
\begin{align}\label{convo1}
(&\delta_{(r,a)} *_{p,q} \delta_{(s,b)})(f)\\ 
&= \kappa_{p,q}\int_{B_q}
f\bigl(\sqrt{r^2 + s^2 + rws + (rws)^*},a+b- {\rm Im\>} tr(rws)\bigr)
\notag\\ &\quad\quad\quad\quad
\cdot
\Delta(I_q-w^*w)^{p-2q}\, dw
\notag
\end{align}
for $f\in C_b(\Pi_q\times\b R)$, $r,s\in \Pi_q$, $a,b\in\b R$ with
$I_q\in M_{q,q}$ the identity matrix,
\begin{align}
&B_q:= \{ w\in M_{q,q}:\> w^*w<I_q, \> \text{i.e.,} \> I_q- w^*w \> \text{is
  positive definite}\},\notag\\
&dw \>\>  \text{denoting integration w.r.t.~Lebesgue measure on}\> M_{q,q}, \notag\\
&\Delta\>\> \text{ denoting the determinant of a $\>  q\times q$ matrix, and} \notag\\
&\kappa_{p,q}:=\Biggl(\int_{B_q}\Delta(I_q-w^*w)^{p-2q}\,
dw\Biggr)^{-1}>0.\notag
\end{align}
\end{proposition}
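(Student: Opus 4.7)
The strategy I would use is to reduce the convolution to an integral over a single copy of $U_p$ via the orbit-convolution formula, and then to push the Haar measure down to $B_q$. Fix orbit representatives $x=\binom{r}{0}$ and $y=\binom{s}{0}$ in $M_{p,q}$, with zero blocks of size $(p-q)\times q$, so that $|x|=r$ and $|y|=s$. For continuous bounded $f$ on $\Pi_q\times\b R$, lift to the $U_p$-invariant function $\tilde f(z,c):=f(|z|,c)$ on $H_{p,q}$. By construction of $*_{p,q}$ from the double-coset algebra,
\[
(\delta_{(r,a)}*_{p,q}\delta_{(s,b)})(f)=\int_{U_p\times U_p}\tilde f\bigl((ux,a)\cdot(vy,b)\bigr)\, du\, dv.
\]
The Heisenberg product gives $(ux,a)\cdot(vy,b)=(ux+vy,\, a+b-{\rm Im\>}tr(x^*u^*vy))$. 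Substituting $v=uV$ and using left $U_p$-invariance of $\tilde f$ in its $M_{p,q}$-variable collapses the $u$-integral, leaving
\[
\int_{U_p} f\bigl(|x+Vy|,\, a+b-{\rm Im\>}tr(x^*Vy)\bigr)\, dV.
\]

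The next step is elementary block algebra. Write $V=\begin{pmatrix} w & B\\ C & D\end{pmatrix}\in U_p$ with top-left block $w\in M_{q,q}$. Because the lower $(p-q)\times q$ blocks of $x$ and $y$ vanish, direct multiplication gives $x^*Vy=rws$ and
\[
|x+Vy|^2 = r^2+s^2+rws+(rws)^*+s(w^*w+C^*C)s.
\]
Unitarity of $V$ forces $w^*w+C^*C=I_q$ by orthonormality of the first $q$ columns, so the last summand collapses to $s^2$. Thus the integrand depends on $V$ only through $w$, and the proof reduces to identifying the pushforward of the normalized Haar measure of $U_p$ under the projection $V\mapsto w$.

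The analytic core is then the classical matrix-beta identity: this pushforward is supported on $\overline{B_q}$ with Lebesgue density $\kappa_{p,q}\,\Delta(I_q-w^*w)^{p-2q}$ on $B_q$. It is derived from a polar/$QR$-type decomposition of the first $q$ columns of $V$ together with a Jacobian computation, and it is precisely the ingredient used by R\"osler in \cite{R1} for the Bessel case; the integrability constraint $p-2q\ge 0$ forces the stated range $p\ge 2q$. Substituting this density into the reduced integral yields the formula of the proposition. The main obstacle I anticipate is the clean invocation of this matrix-beta distribution of the top-left corner of a Haar-random unitary; the remaining work (the orbit-convolution formula, the two block identities, and associativity, which is inherited automatically from the group convolution on $G_{p,q}=U_p\ltimes H_{p,q}$) is essentially bookkeeping.
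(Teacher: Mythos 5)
Your proposal follows essentially the same route as the paper's proof: reduce to a single integral over $U_p$ via the orbit-convolution formula with the representatives $\sigma_0 r,\ \sigma_0 s$ (the paper writes this one-integral formula directly from Section 8.2 of Jewett rather than collapsing a double integral by the substitution $v=uV$), observe by block algebra that the integrand depends on the unitary only through its upper-left $q\times q$ block, and invoke the truncation/matrix-beta lemma for the pushforward of Haar measure under the corner projection, which is precisely the truncation lemma of R\"osler cited in the paper. One small slip to correct: the block computation gives $|x+Vy|^2=r^2+rws+(rws)^*+s(w^*w+C^*C)s$, and the last summand becomes $s^2$ by unitarity of the first $q$ columns; as written you list $s^2$ twice, which would yield $r^2+2s^2+rws+(rws)^*$.
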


\begin{proof} The canonical projection $\phi:H_{p,q}\to H_{p,q}^{U_p}\simeq
  \Pi_q\times\b R$ from the Heisenberg group onto its orbit space is given
  explicitly by $\phi(x,a)=(|x|, a)$ with $|x|:=\sqrt{x^*x}$. 
Moreover, if we define  the block matrix
\[ \sigma_0 :=  \begin{pmatrix} I_q\\ 0
\end{pmatrix} \in M_{p,q},\]
an ``orbit'' $(r,a)\in \Pi_q\times\b R$ has the representative $( \sigma_0r,a)\in H_{p,q}$. 
By the general definition of the orbit convolution $*_{p,q}$
 (see Section 8.2 of \cite{J} or  \cite{R1})
we have
\begin{align}
(\delta_{(r,a)} *_{p,q} \delta_{(s,b)})(f) &= (\delta_{U_p(\sigma_0r,a)} *_{p,q} \delta_{U_p(\sigma_0s,b)})(f)
\notag\\
 &= \int_{U_p} f\bigl(\phi\bigl((\sigma_0r,a)\cdot u(
(\sigma_0s,b)\bigr)\bigr) \> du\notag\\
 &= \int_{U_p} f\bigl(|\sigma_0r+u\sigma_0s|, a+b- {\rm Im\>} tr(r\sigma_0^*u\sigma_0s)\bigr)
 \> du\end{align}
where $du$ denotes integration w.r.t.~the normalized Haar measure on
$U_p$. Using the definition of the absolute value of a matrix above and
denoting the upper $q\times q$ block of $u$ by  $u_q:=\sigma_0^*u\sigma_0\in
M_{q,q}$, we readily obtain
$$(\delta_{(r,a)} *_{p,q} \delta_{(s,b)})(f)\> =\>
\int_{U_p} f\bigl(\sqrt{r^2+s^2+ru_qs+(ru_qs)^*}, a+b- {\rm Im\>} tr(ru_qs)\bigr)
 \> du.$$
The truncation lemma 2.1 of \cite{R2} now implies the proposition.
\end{proof}

\begin{remarks}
\begin{enumerate}
\item[\rm{(1)}]  The integral in Eq.~(\ref{convo1}) exists precisely for
  exponents $p-2q>-1$ which shows that a formula for $ *_{p,q}$ of the above
  kind is available precisely for $p\ge 2q$. 
\item[\rm{(2)}]  Let  $p\ge 2q\ge1$ be integers,
 and let  $f\in C_b(\Pi_q\times\b R)$, $r,s\in \Pi_q$, $a,b\in\b R$.
Formula~(\ref{convo1}) and a straightforward computation  yield that
\begin{align}
(&\delta_{(s,b)} *_{p,q} \delta_{(r,a)})(f)\> \notag \\
&=\>\kappa_{p,q}\int_{B_q}
f\bigl(\sqrt{r^2 + s^2 + rws + (rws)^*},a+b+ {\rm Im\>} tr(rws)\bigr)
\notag \\ &\quad\quad\quad
\cdot
\Delta(I_q-w^*w)^{p-2q}\, dw.
\notag\end{align}
If one compares this with  Eq.~(\ref{convo1}), the reader can  check directly
the known fact that
 $ *_{p,q}$ is non-commutative precisely for $q\ge 2$. For this, take for
 instance, $a=b=0$, $r=\left(\begin{array}{cc}
 1&0\\0&{\bf 0}
\end{array}\right)$, and  $s=\left(\begin{array}{cc}
{\bf 0} &0\\0&1
\end{array}\right)$ with the zero matrix $ {\bf 0}\in M_{q-1,q-1}$.
\end{enumerate}
\end{remarks}

We next extend the definition of the Heisenberg convolution in
Eq.~(\ref{convo1}) to noninteger exponents $p\in]2q-1,\infty[$ for a fixed
     dimension parameter $q$ by Carlson's theorem on analytic continuation.
For the convenience of the reader we recapitulate this result from \cite{Ti}, p.186:

\begin{theorem}\label{continuation} Let $f(z)$ be holomorphic in a neighbourhood of
$\{z\in \b C:{\rm Re\>} z \geq 0\}$ satisfying $f(z) = O\bigl(e^{c|z|}\bigr)$
on $\,{\rm Re\>}  z \geq 0$ for some $c<\pi$. 
If $f(z)=0$ for all nonnegative integers $z$, then $f$ is identically zero for
${\rm Re\>}  z>0$.
\end{theorem}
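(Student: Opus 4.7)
The standard approach (see Titchmarsh \cite{Ti}) is to apply the Phragmén--Lindelöf principle to the auxiliary function
$$
g(z) \;:=\; \frac{f(z)}{\sin(\pi z)}.
$$
Since $f$ vanishes at the nonnegative integers, which are exactly the zeros of $\sin(\pi z)$ in the closed right half-plane, $g$ extends to a holomorphic function on an open neighborhood of $\{\operatorname{Re} z \geq 0\}$.

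First I would estimate $g$ on the imaginary axis using $|\sin(\pi iy)| = \sinh(\pi|y|)$ together with the hypothesis $|f(iy)| \leq Me^{c|y|}$: one gets
$$
|g(iy)| \;\leq\; \frac{Me^{c|y|}}{\sinh(\pi|y|)},
$$
which decays exponentially as $|y| \to \infty$ (since $c<\pi$) and is therefore bounded on the entire imaginary axis by some constant $M_1$. Next I would obtain a global exponential-type bound $|g(z)| \leq C e^{c|z|}$ on the closed right half-plane: outside small disks around the nonnegative integers, $|\sin(\pi z)|$ admits a positive lower bound, while inside each such disk the bound is extended by the maximum modulus principle on the circle $|z-n|=1/2$ (where a direct calculation gives $|\sin(\pi w)| \geq 1$ for $|w|=1/2$).

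The central technical step is a sharp Phragmén--Lindelöf estimate. A direct computation of the indicator of $g$ in the right half-plane yields
$$
\limsup_{r \to \infty} \frac{\log|g(re^{i\theta})|}{r} \;\leq\; c - \pi |\sin\theta|, \qquad -\tfrac{\pi}{2} \leq \theta \leq \tfrac{\pi}{2},
$$
and the elementary inequality $c\,|\tan(\theta/2)| \leq \pi$ (valid in this range precisely because $c<\pi$) upgrades this to $c - \pi|\sin\theta| \leq c\cos\theta$. The Phragmén--Lindelöf principle for the right half-plane, applied with the indicator $c\cos\theta$ of the comparison function $e^{cz}$, then gives $|g(z)| \leq M_1 e^{c\operatorname{Re} z}$ throughout $\{\operatorname{Re} z \geq 0\}$.

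Finally, set $G(z):=g(z)\,e^{-cz}$. Then $|G|\leq M_1$ on the closed right half-plane, while $|G(iy)| = |g(iy)|$ decays exponentially as $|y|\to\infty$. A Blaschke factorization combined with the Poisson integral representation for $\log|G|$ on the right half-plane (whose weighted integral against $|y|$ diverges for $\operatorname{Re} z>0$) forces $G\equiv 0$, hence $f\equiv 0$ on $\{\operatorname{Re} z > 0\}$. I expect the main obstacle to be the sharp Phragmén--Lindelöf step: the hypothesis $c<\pi$ is tight, as witnessed by $f(z)=\sin(\pi z)$, which vanishes at every integer but is itself nonzero.
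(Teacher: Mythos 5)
The paper does not prove this statement at all — it is Carlson's theorem, quoted verbatim from Titchmarsh \cite{Ti}, p.~186 — and your argument is essentially the classical proof from that source (divide by $\sin(\pi z)$, bound the quotient on the imaginary axis and, via maximum modulus on the circles $|z-n|=1/2$, near the nonnegative integers, then Phragm\'en--Lindel\"of, then a uniqueness argument from the exponential decay $|g(iy)|\le M e^{(c-\pi)|y|}$ on the boundary), and it is correct. The only loose joint is the direct appeal to ``Phragm\'en--Lindel\"of for the right half-plane'' via the indicator inequality $c-\pi|\sin\theta|\le c\cos\theta$: the opening $\pi$ is the critical angle for functions of exponential type, so this step is most cleanly justified by applying the ordinary Phragm\'en--Lindel\"of theorem to $G(z)=g(z)e^{-cz}$ separately on the two closed quadrants, where your imaginary-axis bound and your global bound $|g(z)|\le Ce^{c|z|}$ (hence $|G|$ bounded on the positive real axis) make $G$ bounded on all three boundary rays, after which your final Blaschke/Poisson (log-integral) argument for bounded analytic functions on the half-plane finishes the proof exactly as you describe.
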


This theorem will lead to the following extended convolution:

\begin{theorem}\label{heisenberg-convolution} Let $q\ge1$ be an integer and  $p\in]2q-1,\infty[$.
Define the convolution of point measures on $\Pi_q\times\b R$ by
\begin{align}\label{convo2}
(&\delta_{(r,a)} *_{p,q} \delta_{(s,b)})(f)\\ 
&= \kappa_{p,q}\int_{B_q}
f\bigl(\sqrt{r^2 + s^2 + rws + (rws)^*},a+b- {\rm Im\>} tr(rws)\bigr)
\notag\\&\quad\quad\quad
\cdot
\Delta(I_q-w^*w)^{p-2q}\, dw
\notag
\end{align}
for $f\in C_b(\Pi_q\times\b R)$, $r,s\in \Pi_q$, $a,b\in\b R$  where
$\kappa_{p,q} $, $dw$, $\Delta$ and other data are defined as in Proposition \ref{computation-convo}
above. Then this formula
defines a weakly
continuous convolution of point measures on $ \Pi_q\times \b R$ 
which can be extended uniquely in a bilinear, weakly continuous way to a
probability preserving, weakly continuous, and associative convolution on
 $M_b(\Pi_q\times \b R)$. More precisely, $(\Pi_q\times \b R, *_{p,q})$ is a
hypergroup with $(0,0)$ as identity and with the involution
 $\overline{(r,a)}:= (r,-a)$. 
\end{theorem}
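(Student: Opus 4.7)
The plan is to verify the Jewett hypergroup axioms from \cite{BH}, \cite{J}, isolating associativity as the single step requiring analytic continuation. First I would settle the basic analytic facts about~(\ref{convo2}). The integral $\int_{B_q}\Delta(I_q - w^*w)^{p-2q}\, dw$ converges precisely for $p - 2q > -1$, so $\kappa_{p,q}>0$ is well-defined; this also makes $\delta_{(r,a)}*_{p,q}\delta_{(s,b)}$ a probability measure, with compact support because $w\in B_q$ is bounded in operator norm. Dominated convergence gives weak continuity of $(r,a,s,b)\mapsto\delta_{(r,a)}*_{p,q}\delta_{(s,b)}$, and the standard Fubini argument yields the unique bilinear weakly continuous extension to $M_b(\Pi_q\times\b R)$.

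Next I would dispatch the algebraic axioms apart from associativity by direct computation. Plugging $r=0$ into~(\ref{convo2}) gives $\sqrt{s^2}=s$ and $a+b-0=b$, so $\delta_{(0,0)}$ is a two-sided identity, and $\overline{(r,a)}:=(r,-a)$ is visibly an involution. The conjugation law $\overline{\mu*_{p,q}\nu}=\bar\nu*_{p,q}\bar\mu$ follows from the substitution $w\mapsto w^*$, which preserves $B_q$, Lebesgue measure, and the weight $\Delta(I_q-w^*w)^{p-2q}$ (since $w^*w$ and $ww^*$ have the same characteristic polynomial), interchanges $rws+(rws)^*$ with $sw^*r+(sw^*r)^*$, and negates ${\rm Im\>}tr(rws)$. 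For the axiom $(0,0)\in{\rm supp}(\delta_{(r,a)}*_{p,q}\delta_{(s,b)})\iff (s,b)=(r,-a)$, the $(\Leftarrow)$ direction uses $w=-tI_q$ with $t\uparrow 1$, which drives the first coordinate of the integrand to $\sqrt{2-2t}\,r\to 0$ and the second to $0$; the $(\Rightarrow)$ direction follows from taking traces in $r^2+s^2+rws+(rws)^*=0$ and applying the Cauchy--Schwarz bound $|tr(rws)|\le \sqrt{tr(r^2)tr(s^2)}$ on $B_q$, whose equality case (combined with analysis of the matrix identity, not just its trace) forces $r=s$ and then $a=-b$.

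The main step is associativity, which I would tackle via Carlson's theorem exactly as in \cite{R1}. For $f\in C_c(\Pi_q\times\b R)$ and three points $x_i=(r_i,a_i)$, expanding both iterated convolutions via~(\ref{convo2}) and factoring out $\kappa_{p,q}^2$ writes the difference
\begin{equation*}
\bigl[(\delta_{x_1}*_{p,q}\delta_{x_2})*_{p,q}\delta_{x_3}\bigr](f) - \bigl[\delta_{x_1}*_{p,q}(\delta_{x_2}*_{p,q}\delta_{x_3})\bigr](f) = \kappa_{p,q}^2\, H(p),
\end{equation*}
where $H(p)$ is the difference of two absolutely convergent double integrals over $B_q\times B_q$ of bounded continuous functions of $(w_1,w_2)$, weighted by $\Delta(I_q-w_1^*w_1)^{p-2q}\Delta(I_q-w_2^*w_2)^{p-2q}$. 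For complex $p$ with ${\rm Re\>}p>2q-1$ this weight has modulus independent of ${\rm Im\>}p$, so $H$ is holomorphic on that half-plane by differentiation under the integral and is \emph{uniformly bounded} on every vertical strip, since $\Delta(I_q-w^*w)\le 1$ on $B_q$. By Proposition~\ref{computation-convo}, $H(p)=0$ for each integer $p\ge 2q$, because then $*_{p,q}$ coincides with the orbit convolution from the double-coset hypergroup $G_{p,q}//U_p$ and is associative by construction. Setting $g(z):=H(z+2q)$, Theorem~\ref{continuation} (whose growth hypothesis holds trivially for bounded $g$) forces $g\equiv 0$ on $\{{\rm Re\>}z\ge 0\}$, and holomorphic continuation then extends the vanishing to $\{{\rm Re\>}z>-1\}$, i.e.\ to all real $p>2q-1$; bilinearity lifts associativity to $M_b(\Pi_q\times\b R)$. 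The principal obstacle is really only the verification that $H$ is holomorphic and vertically bounded for complex $p$, which rests on the uniform integrability of $\Delta(I_q-w^*w)^{{\rm Re\>}p-2q}$ near $\partial B_q$ for ${\rm Re\>}p>2q-1$; continuity of the support map $(x,y)\mapsto{\rm supp}(\delta_x*_{p,q}\delta_y)$ in the Michael topology is then routine from the explicit formula~(\ref{convo2}).
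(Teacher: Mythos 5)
Your proposal is correct and rests on the same key idea as the paper: analytic continuation in $p$ via Carlson's theorem (Theorem \ref{continuation}) from the integer group cases of Proposition \ref{computation-convo}, with the easy axioms checked directly. You deviate in two sub-steps, both legitimately. For associativity, the paper keeps $\kappa_{p,q}^2$ inside the function being continued and therefore needs the growth estimate $|\kappa_{p,q}|=O(|p|^{q^2})$ of (\ref{kappaestim}) to satisfy Carlson's growth hypothesis after a shift; you factor $\kappa_{p,q}^2$ out, so the remaining difference $H(p)$ of two double integrals over $B_q\times B_q$ is bounded on ${\rm Re\>}p\ge 2q$ (where indeed $|\Delta(I_q-w^*w)^{p-2q}|\le 1$), apply Carlson there, and then use the identity theorem to push the vanishing down to ${\rm Re\>}p>2q-1$. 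This avoids the $\kappa_{p,q}$ estimate altogether (and incidentally sidesteps the need for holomorphy in a neighbourhood of the closed half-plane at the shifted origin); only note that your claim of boundedness ``on every vertical strip since $\Delta\le1$'' is literally valid only for ${\rm Re\>}p\ge 2q$ -- for $2q-1<{\rm Re\>}p<2q$ the weight exceeds $1$ near $\partial B_q$, but that region is not needed for Carlson and boundedness there follows from integrability in any case. Second, for the support-related axioms the paper simply observes that ${\rm supp}(\delta_{(r,a)}*_{p,q}\delta_{(s,b)})$ is independent of $p\in\,]2q-1,\infty[$ (the density is strictly positive on $B_q$), so these axioms are inherited from the group case $p\ge 2q$; you instead verify them directly, getting the involution law from the substitution $w\mapsto w^*$ and the axiom $(0,0)\in{\rm supp}(\delta_{(r,a)}*_{p,q}\delta_{(s,b)})\iff(s,b)=(r,-a)$ from a Cauchy--Schwarz equality analysis. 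That analysis is only sketched in your write-up but does close: for a limit point $w\in\overline{B_q}$ one gets equality in $|tr(rws)|\le\sqrt{tr(r^2)tr(s^2)}$, forcing $ws=-r$, hence $s^2=r^2$ and $s=r$ by uniqueness of positive semidefinite square roots, and then $tr(rwr)=-tr(r^2)$ is real, giving $b=-a$. Your route is more self-contained; the paper's $p$-independence argument is shorter and covers all support axioms (including the Michael-topology continuity you defer to a closing remark) in one stroke.
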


\begin{proof} It is clear from Eq.~(\ref{convo2}) that the mapping
$$(\Pi_q\times\b R)\times(\Pi_q\times\b R)\to M_b(\Pi_q\times\b R), \quad
((r,a), (s,b))\mapsto \delta_{(r,a)} *_{p,q} \delta_{(s,b)} $$
is probability preserving and weakly
continuous. It is now standard (see \cite{J}) to extend  this convolution
uniquely in a bilinear and   weakly
continuous way to a probability preserving convolution on  $M_b(\Pi_q\times \b
R)$.

To prove associativity, it suffices to  consider point measures.
So let $r,s,t\in \Pi_q$, $a,b,c\in\b R$, and $f\in C_b(\Pi_q)$. Then
\begin{align}
&\delta_{(r,a)}*_{p,q}(\delta_{(s,b)}*_{p,q}\delta_{(t,c)})(f) \notag\\
&= 
{\kappa_{p,q}^2}\int_{B_q}\int_{B_q} f\bigl(H(r, a,s,b,t, c;v,w)\bigr)
\notag\\&\quad\quad\quad\quad\quad\quad\quad
\cdot\Delta(I_q-v^*v)^{p-2q}\Delta(I_q-w^*w)^{p-2q}dv dw \,=:I(p) 
\notag\end{align}
with a certain  argument $H$ 
independent of $p$. Similar, 
\[(\delta_{(r,a)}*_{p,q}\delta_{(s,b)})*_{p,q}\delta_{(t,c)}(f) 
=:  I^\prime(p)\]
admits a similar integral representation with some integrand  $H^\prime$ independent of $p$. 
The integrals $I(p)$ and $I^\prime(p)$ are well-defined and holomorphic in 
$\{p\in \b C: {\rm Re\>}  p >2q -1\}$. 
Furthermore, we know from the group cases above that $\,I(p) = I^\prime(p)\,$ 
 for all integers $p\ge 2q$.
As 
\begin{equation}\label{kappaestim}
|\kappa_{p,q}|=O(|p|^{q^2}) \>\>\text{uniformly in $\{p\in \b C:\>{\rm Re\>}  p >
  2q-1\} $ for  $p\to\infty$}
\end{equation}
(see, for example,  Eq.~(3.9) of \cite{R1}), 
we obtain readily that $$f(p):=I(p+2q-1) - I^\prime(p+2q-1)=O(|p|^{2q^2}),$$ and Theorem
\ref{continuation} ensures that $I(p) = I^\prime(p)$  for all $p>2q-1$. Thus
$*_{p,q}$ is associative.

Finally, it is clear by Eq.~(\ref{convo2}) that $\delta_{(0,0)}$ is the
neutral element. Moreover, as the support $supp
(\delta_{(r,a)}*_{p,q}\delta_{(s,b)})$
of our convolution is obviously independent of $p\in]2q-1,\infty[$, all
     further 
hypergroup axioms from \cite{BH} or \cite{J} regarding the support of
convolution products are obvious, as they are valid
for the group cases with integer $p\ge 2q$.
\end{proof}

\begin{remark}\label{support-remark}  The convolution (\ref{convo2})
 obviously satisfies the following 
support formula: For all $(r,a),(s,b)\in\Pi_q\times \b R$,
\begin{align}
supp(&\delta_{(r,a)} *_{p,q} \delta_{(s,b)})\subset
\notag\\&
\subset \{(t,c)\in\Pi_q\times \b R:
\> \|t\|\le \|r\| +\|s \|, \> |c|\le |a|+|b|+ \|r\|\cdot \|s \|\}
\notag
\end{align}
with the Euclidean norm $\|x\|:=\sqrt{tr(x^*x)}$.
\end{remark}

We next collect some properties of the hypergroups  $(\Pi_q\times \b R, *_{p,q})$
for  $p\in]2q-1,\infty[$. We first turn to examples of automorphisms. For this
we first recapitulate that a homeomorphism $T$ on $\Pi_q\times \b R$ is called
a hypergroup automorphism, if for all $(r,a), (s,b)\in \Pi_q\times \b R$,
$$T(\delta_{(r,a)}*\delta_{\overline{(s,b)}})= \delta_{T(r,a)}*\delta_{\overline{T(s,b)}},$$
where the left hand side means the image of the measure under $T$.

\begin{lemma}\label{automorph}
For all $u\in U_q$ and $\lambda>0$, the mappings
$$T_{u,\lambda}(r,a):=(\lambda uru^*, \lambda^2a)$$
are hypergroup automorphisms on   $(\Pi_q\times \b R, *_{p,q})$.
\end{lemma}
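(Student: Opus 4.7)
The plan is to verify the three defining conditions for a hypergroup automorphism: that $T_{u,\lambda}$ is a homeomorphism of $\Pi_q\times\b R$, that it commutes with the involution $\overline{(r,a)}=(r,-a)$, and that it is compatible with the convolution $*_{p,q}$. The first two are immediate, since $T_{u,\lambda}$ is bicontinuous with inverse $T_{u^*,1/\lambda}$, fixes the identity $(0,0)$, and clearly satisfies $T_{u,\lambda}(\overline{(r,a)})=\overline{T_{u,\lambda}(r,a)}$ as the involution only flips the sign of the second coordinate.

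The substantive step is to establish the convolution identity
\[ T_{u,\lambda}(\delta_{(r,a)} *_{p,q}\delta_{(s,b)}) = \delta_{T_{u,\lambda}(r,a)} *_{p,q}\delta_{T_{u,\lambda}(s,b)} \]
on point measures; the extension to all of $M_b(\Pi_q\times\b R)$ then follows by bilinear, weakly continuous extension as in the proof of Theorem~\ref{heisenberg-convolution}. I would test both sides against $f\in C_b(\Pi_q\times\b R)$ using the integral representation (\ref{convo2}). On the right-hand side, after replacing $r$ and $s$ by $\lambda uru^*$ and $\lambda usu^*$, the matrix expression under the square root becomes $\lambda^2 u(r^2+s^2+r(u^*wu)s+(r(u^*wu)s)^*)u^*$, and the scalar coordinate becomes $\lambda^2(a+b)-\lambda^2 \cdot {\rm Im\>} tr(r(u^*wu)s)$ by cyclic invariance of the trace. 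The substitution $w\mapsto u^*wu$ is then a Lebesgue-measure-preserving bijection of $B_q$ onto itself: the condition $w^*w<I_q$ is preserved by unitary conjugation, and the weight $\Delta(I_q-w^*w)^{p-2q}$ is invariant since $\Delta(u^*Xu)=\Delta(X)$ for all $X\in M_{q,q}$. Combined with the identities $\sqrt{\lambda^2 X}=\lambda\sqrt{X}$ for $X\in \Pi_q$ and $\sqrt{uAu^*}=u\sqrt{A}u^*$ for $A\in \Pi_q$, this matches the right-hand integral exactly with the one obtained on the left-hand side by substituting $T_{u,\lambda}$ into the argument of $f$ in (\ref{convo2}).

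The only delicate point is the interplay of the positive square root with unitary conjugation and positive scaling, but this is routine by uniqueness of the positive semidefinite square root on $\Pi_q$. I expect the main (mild) obstacle to be merely bookkeeping: making sure the correspondence $w\leftrightarrow u^*wu$ is organized so that the transformed integrand on the right-hand side is literally the integrand coming from pushing $T_{u,\lambda}$ through the formula on the left-hand side, without any stray factors or signs. Once this is verified, the lemma follows.
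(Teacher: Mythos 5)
Your proposal is correct and follows essentially the same route as the paper: apply the integral formula (\ref{convo2}) to the transformed point measures, use cyclicity of the trace, $\sqrt{uAu^*}=u\sqrt{A}\,u^*$ and $\sqrt{\lambda^2 A}=\lambda\sqrt{A}$, and the measure-preserving substitution $w\mapsto u^*wu$ on $B_q$ (which also leaves $\Delta(I_q-w^*w)^{p-2q}$ invariant). The paper's proof is exactly this computation, with the homeomorphism and involution compatibility left implicit.
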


\begin{proof}
Eq.~(\ref{convo2}) yields
\begin{align}
(\delta_{T_{u,\lambda}(r,a)} *_{p,q}& \delta_{T_{u,\lambda}(s,b)})(f)\notag\\ 
= \kappa_{p,q}\int_{B_q}&
f\bigl(\lambda\sqrt{u(r^2 + s^2 + ru^*wus + (ru^*wus)^*)u^*},\notag\\ 
&\quad\quad\quad\quad
\lambda^2(a+b- {\rm Im\>} tr(uru^*wusu^*)\bigr)\cdot
\Delta(I_q-w^*w)^{p-2q}\, dw.
\notag
\end{align}
Using $tr(ut)=tr(tu)$, $\sqrt{utu^*}= u\sqrt t u^*$ and the substitution
$v=u^*wu$, we see that this expression is equal to
\begin{align}
&\kappa_{p,q}\int_{B_q}
f\bigl(\lambda u \sqrt{r^2 + s^2 + rvs + (rvs)^*)}u^*,
\notag\\ 
&\quad\quad\quad\quad\quad\quad
\lambda^2(a+b- {\rm Im\>} tr(rvs)\bigr)
\cdot
\Delta(I_q-w^*w)^{p-2q}\, dw\notag\\ 
&=T_{u,\lambda}(\delta_{(r,a)} *_{p,q} \delta_{(s,b)})
\end{align}
as claimed.
\end{proof}

\begin{remark} The Bessel hypergroups on the matrix cones $\Pi_q$ of \cite{R1}
  admit many more hypergroup automorphisms. In fact, a complete classification
  of all automorphisms on these Bessel hypergroups is given in \cite{V3}. Due to the additional term
  ${\rm Im\>} tr(rws)$ in Eq.~(\ref{convo2}), most of these hypergroup
  automorphisms on  $\Pi_q$ cannot be extended to our Heisenberg convolutions.
\end{remark}

 We next turn to the (left) Haar measure which is
     unique up to a multiplicative constant by  \cite{J}:

\begin{proposition}\label{Haar-cone}
A left Haar measure of the hypergroup $(\Pi_q\times \b R, *_{p,q})$ is given by
\[ \omega_{p,q}(f) = \int_{\Pi_q\times \b R}
f(\sqrt{r}, a) \Delta(r)^{p-q} dr \> da\]
for a continuous function  $f\in  C_c(\Pi_q\times \b R)$ with compact support
and the restriction of the Lebesgue measure $dr$ on the vector space of all
Hermitian $q\times q$ matrices. 

Moreover, this left Haar measure is also a right Haar measure.
\end{proposition}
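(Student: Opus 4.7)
The plan is to establish the formula first in the group case (integer $p\ge 2q$) as the push-forward of the Haar measure of $H_{p,q}$, and then extend it to arbitrary $p\in]2q-1,\infty[$ by Carlson's theorem, in parallel with the proof of associativity in Theorem \ref{heisenberg-convolution}.

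\emph{Step 1 (group case).} For integer $p\ge 2q$ the Heisenberg group $H_{p,q}$ is nilpotent, hence unimodular, with bi-invariant Haar measure $dX\,da$. Since $(\Pi_q\times\b R,*_{p,q})$ is by construction the orbit hypergroup of the $U_p$-action on $H_{p,q}$, the push-forward of $dX\,da$ under the projection $\phi:(X,a)\mapsto(|X|,a)$ is a two-sided Haar measure by the general theory of orbit hypergroups (cf.\ \cite{J}). I would identify this push-forward using the classical Wishart-type change-of-variables formula on $M_{p,q}(\b C)$,
\[\int_{M_{p,q}}g(X^*X)\,dX\;=\;c_{p,q}\int_{\Pi_q}g(r)\,\Delta(r)^{p-q}\,dr,\]
which, after the substitution $r\mapsto\sqrt r$ already built into the statement, yields precisely $\omega_{p,q}$ up to a positive multiplicative constant, absorbable into the normalisation of Haar measure.

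\emph{Step 2 (analytic continuation).} Fix $f\in C_c(\Pi_q\times\b R)$ and $(r_0,a_0)\in\Pi_q\times\b R$. By Remark \ref{support-remark}, the integrand in
\[L(p):=\int_{\Pi_q\times\b R}\bigl(\delta_{(r_0,a_0)}*_{p,q}\delta_{(s,b)}\bigr)(f)\,d\omega_{p,q}(s,b)\]
vanishes outside a compact set in $(s,b)$ independent of $p$. Together with Eq.~(\ref{convo2}) and the estimate (\ref{kappaestim}), this shows that $L(p)$ and $R(p):=\int f\,d\omega_{p,q}$ admit holomorphic extensions to $\{\mathrm{Re}\,p>2q-1\}$ of polynomial-times-exponential growth in $|p|$. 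Step 1 gives $L(p)=R(p)$ for every integer $p\ge 2q$, so Theorem \ref{continuation}, applied to the shifted difference $p\mapsto L(p+2q-1)-R(p+2q-1)$, forces $L(p)=R(p)$ for all real $p>2q-1$. This is left invariance; right invariance follows by the same argument starting from bi-invariance in the group case.

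\emph{Main obstacle.} The delicate point in Step 2 is the growth-rate condition $c<\pi$ built into Carlson's theorem: the density $\Delta(r)^{p-q}$ of $\omega_{p,q}$ contributes an exponential factor $M_f^{\mathrm{Re}\,p}$ with $M_f=\sup_{r\in\mathrm{supp}(f)}\Delta(r)$, and $\log M_f$ need not lie below $\pi$ for general $f$. This is circumvented by the dilation automorphisms $T_{I_q,\lambda}$ of Lemma \ref{automorph}: $L(p)$ and $R(p)$ transform covariantly (by the same factor $\lambda^{2qp+2}$) under $f\mapsto f\circ T_{I_q,\lambda}^{-1}$, so by linearity and weak continuity of both sides it suffices to verify the Haar identity on the dense subspace of $f\in C_c(\Pi_q\times\b R)$ whose support lies in an arbitrarily small neighbourhood of $(0,0)$, where $\log M_f<\pi$ holds automatically. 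The Wishart Jacobian invoked in Step 1 is classical and, together with the support formula in Remark \ref{support-remark}, supplies the remaining ingredients.
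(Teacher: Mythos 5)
Your proposal follows essentially the same route as the paper: the integer case via the push-forward of the (unimodular) Haar measure of $H_{p,q}$ under $\phi$ together with the Wishart-type Jacobian (the paper cites the computation in Section 3.1 of R\"osler \cite{R1}), then analytic continuation by Carlson's theorem for the Haar identity, with the exponential-growth obstacle handled by the dilation automorphisms $T_{I_q,\lambda}$ exactly as in the paper's proof. One nuance: the reduction must shrink both $\mathrm{supp}\,f$ and the translating point $(r,a)$ simultaneously (your covariance under $T_{I_q,\lambda}$ does this automatically, and the paper scales $\lambda r$ and $f_\lambda$ together), whereas small-support functions are not a dense subspace of $C_c(\Pi_q\times \b R)$ — but density is neither true nor needed, since the covariance identity treats each pair $(f,(r,a))$ individually.
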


\begin{proof} We first recall that the Heisenberg groups $H_{p,q}$ are
  unimodular with the usual Lebesgue measure $d\lambda$ as Haar
  measure. Therefore, by general results on orbit hypergroups (see e.g. \cite{J}), the image
  $\phi(d\lambda)$ of $d\lambda$ under the canonical projection
  $\phi:H_{p,p}\to \Pi_q\times \b R$ is a left and right Haar measure on  the
  hypergroup $(\Pi_q\times \b R, *_{p,q})$.
Moreover, the computation in Section 3.1 of  \cite{R1} shows that
$$\phi(d\lambda)(r,a)= c_{p,q} \cdot\Delta(r)^{p-q} dr\> da\in M^+(\Pi_q\times \b
R)$$
with a certain known constant $c_{p,q}>0$. This proves the result for integers
$p\ge 2q$.

For the general case we must check that
\begin{align}\label{Haar-condition}
&\int_{\Pi_q}\int_{\b R} (\delta_{(r,a)} *_{p,q} \delta_{(\sqrt s,b)})(f) \>
\Delta(s)^{p-q}ds \> db \notag\\
=&\int_{\Pi_q}\int_{\b R} (\delta_{(\sqrt s,b)} *_{p,q} \delta_{( r,a)})(f) \>
\Delta(s)^{p-q}ds \> db \notag\\
= &\int_{\Pi_q}\int_{\b R} f(\sqrt s,b) \>
\Delta(s)^{p-q}ds \> db   
\end{align}
for all  $f\in C_c(\Pi_q\times \b R)$, $r\in  \Pi_q$, $a\in \b R$ and $p\in\b
C$ with ${\rm Re\>}  p > 2q-1$, where we use Eq.~(\ref{convo2}) also for the
convolution  for complex $p$.
 Clearly, all expressions are analytic in $p$
for fixed $f,r,a,q$. Moreover, by Eq.~(\ref{convo2}), all three expressions
are bounded by
$$C\|f\|_\infty \kappa_{p,q} \cdot M^{{\rm Re}  (p-q)}$$
with some constant $C$ and
\begin{align}
M:= &\sup\{\Delta(s):\> (s,b)\in\Pi_q\times \b R, \> supp(\delta_{(r,a)}
*_{p,q} \delta_{(\sqrt s,b)})\cap supp\>f\ne \emptyset \}
\notag\\
=&\sup\{\Delta(s):\> (s,b)\in  (r,-a)*_{p,q} supp(f)\} .
 \notag\end{align}
Using the estimate (\ref{kappaestim}) for $\kappa_{p,q}$ and the estimate for
the support of convolution products in Remark
\ref{support-remark}, we obtain that the necessary estimate in Carlson's
theorem \ref{continuation} holds whenever $\|r\|$ and the support of $f$ are
contained in a 
sufficiently small neighborhood of $(0,0)$. Therefore, (\ref{Haar-condition}) holds in this case.

Finally, if  $f\in C_c(\Pi_q\times \b R)$ and  $r\in  \Pi_q$ are arbitrary,
then we choose a sufficiently small scaling parameter $\lambda$ such that
$\lambda r$ and the support of $f_\lambda(s,a):= f(\lambda^{-1} s, \lambda^{-2} a)$
are sufficiently small such that  (\ref{Haar-condition}) holds for $\lambda r$
and  $f_\lambda$. As the scaling map $T_{I_q, \lambda}$ is a hypergroup
automorphism, it follows readily that   (\ref{Haar-condition})  for $\lambda r$
and  $f_\lambda$ is equivalent to   (\ref{Haar-condition})  for $ r$
and  $f$. This completes the proof.  
\end{proof}

\begin{remark}\label{abs-cont}
Eq.~(\ref{convo2}) implies that for  $p>2q-1$ and $(r,a),(s,b)\in\Pi_q\times \b R$ with
positive definite matrices $r,s$, the convolution product $\delta_{(r,a)}
*_{p,q} \delta_{(s,b)}$
admits a density w.r.t.~the Lebesgue measure and hence by the preceding
proposition w.r.t.~the Haar measure of the hypergroup $(\Pi_q\times \b R, *_{p,q})$. 

In fact, in the case $p>2q-1$ consider the linear  map 
$$w\mapsto (r^2 +s^2+rws+(rws)^*,{\rm Im\>} tr(rws))$$
from $B_q\subset \b R^{2q^2}$ to $\Pi_q^\circ\times \b R\subset R^{q^2-1}$ which has
a Jacobi matrix with maximal rank $q^2-1$. As the square root mapping on the
interior $\Pi_q^\circ$ of $\Pi_q$ is a diffeomorphism, the claim follows
immediately from the convolution (\ref{convo2}).
\end{remark}

We next turn to the subhypergroups of $(\Pi_q\times \b R, *_{p,q})$. Recapitulate for this that
 a closed set $X\subset\Pi_q\times \b R $ is called a
subhypergroup, if for all $x,y\in X$, we have $\bar x\in X$ and
$\{x\}*\{y\}:=supp(\delta_x*\delta_y)\subset X$.
We next determine all subhypergroups of $(\Pi_q\times \b R, *_{p,q})$. We
begin with examples of subhypergroups.

\begin{proposition}\label{all-subhyper}
Let $p>2q-1$, $k\in\{1,\ldots,q\}$, and $u\in U_q$. Then
$$X_{k,u}:=\left\{
\left(u\left(\begin{array}{cc}
\tilde r&0\\0&0
\end{array}
\right)u^*, a\right): \> \tilde r\in\Pi_k, a\in\b R\right\}$$
 is a subhypergroup of  $(\Pi_q\times \b R, *_{p,q})$, and the mapping
$$(\tilde r,a)\mapsto \left(u\left(\begin{array}{cc}
\tilde r&0\\0&0
\end{array}
\right)u^*, a\right)$$
is a hypergroup isomorphism between the Heisenberg hpergroup
$(\Pi_k\times \b R, *_{p,k})$ and the subhypergroup  $(X_{k,u}, *_{p,q})$.
\end{proposition}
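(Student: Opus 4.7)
My plan is to first reduce to the case $u = I_q$ via Lemma \ref{automorph}: the hypergroup automorphism $T_{u,1}(r,a) = (uru^*, a)$ maps $X_{k, I_q}$ homeomorphically onto $X_{k, u}$ and intertwines the two candidate embeddings of $\Pi_k \times \b R$. It therefore suffices to show that $X_k := X_{k, I_q}$ is a subhypergroup and that the map
$$\Phi:\Pi_k \times \b R \to \Pi_q \times \b R,\qquad \Phi(\tilde r, a) := \Bigl(\bigl(\begin{smallmatrix} \tilde r & 0 \\ 0 & 0\end{smallmatrix}\bigr), a\Bigr),$$
is a hypergroup isomorphism onto $X_k$. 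Since $\Phi$ is a closed topological embedding, sends $(0,0)$ to $(0,0)$, and satisfies $\overline{\Phi(\tilde r, a)} = \Phi(\overline{(\tilde r, a)})$, the entire assertion collapses to the convolution identity
\begin{equation}\label{eq:sub-plan}
\delta_{\Phi(\tilde r, a)} *_{p, q} \delta_{\Phi(\tilde s, b)} \;=\; \Phi\bigl(\delta_{(\tilde r, a)} *_{p, k} \delta_{(\tilde s, b)}\bigr)
\end{equation}
for all $(\tilde r, a), (\tilde s, b) \in \Pi_k \times \b R$, which simultaneously yields support containment in $X_k$ and the isomorphism.

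To attack (\ref{eq:sub-plan}), I plug the block forms of $r := \bigl(\begin{smallmatrix}\tilde r & 0 \\ 0 & 0\end{smallmatrix}\bigr)$ and $s := \bigl(\begin{smallmatrix}\tilde s & 0 \\ 0 & 0\end{smallmatrix}\bigr)$ into formula (\ref{convo2}) and partition $w \in M_{q, q}$ conformally as $w = \bigl(\begin{smallmatrix}w_{11} & w_{12} \\ w_{21} & w_{22}\end{smallmatrix}\bigr)$ with $w_{11} \in M_{k, k}$. A direct block calculation gives $rws = \bigl(\begin{smallmatrix}\tilde r w_{11} \tilde s & 0 \\ 0 & 0\end{smallmatrix}\bigr)$, so $r^2 + s^2 + rws + (rws)^*$ is block-diagonal with upper-left block $\tilde r^2 + \tilde s^2 + \tilde r w_{11} \tilde s + (\tilde r w_{11}\tilde s)^*$---and so is its positive semidefinite square root---while ${\rm Im\>} tr(rws) = {\rm Im\>} tr(\tilde r w_{11} \tilde s)$. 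In particular the argument of $f$ in (\ref{convo2}) lies in $X_k$ and depends on $w$ only through $w_{11} = \tilde w$, and equals $\Phi$ applied to the $*_{p,k}$-expression at $\tilde w$. Thus (\ref{eq:sub-plan}) is equivalent to the scalar matrix-integral identity
\begin{equation}\label{eq:intid-plan}
\kappa_{p, q}\! \int_{\{w \in B_q :\, w_{11} = \tilde w\}}\! \Delta(I_q - w^* w)^{p - 2q}\, dw_{12}\, dw_{21}\, dw_{22} \;=\; \kappa_{p, k}\, \Delta(I_k - \tilde w^* \tilde w)^{p - 2k}
\end{equation}
for all $\tilde w \in B_k$.

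For integer $p \ge 2q$, (\ref{eq:sub-plan}) is immediate from the group origin of $*_{p, q}$: $H_{p, k}$ embeds as a closed $U_p$-invariant subgroup of $H_{p, q}$ via $(\tilde x, a) \mapsto ((\tilde x \mid 0), a)$, and since $|(\tilde x \mid 0)| = \bigl(\begin{smallmatrix}|\tilde x| & 0 \\ 0 & 0\end{smallmatrix}\bigr)$, this embedding descends under the orbit projections precisely to $\Phi$; compatibility of orbit convolutions with $U_p$-invariant subgroup inclusions then yields (\ref{eq:sub-plan}) for such $p$. For general $p \in ]2q - 1, \infty[$ I would extend (\ref{eq:sub-plan}) by Carlson's Theorem \ref{continuation}, in the same manner as in the proof of Theorem \ref{heisenberg-convolution}: for any fixed $f \in C_b(\Pi_q \times \b R)$ and $(\tilde r, a), (\tilde s, b)$, both sides of (\ref{eq:sub-plan}) applied to $f$ are holomorphic in $\{{\rm Re\>} p > 2q - 1\}$, agree on all integers $p \ge 2q$, and---by combining the growth bound (\ref{kappaestim}) for $\kappa_{p, q}$ and $\kappa_{p, k}$ with the uniform support control of Remark \ref{support-remark} (which confines the $f$-factor to a fixed compact set with a uniform $\|f\|_\infty$ bound)---satisfy a polynomial estimate $O(|p|^N)$ in the relevant right half-plane. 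Carlson's theorem then forces equality for all $p > 2q - 1$. The main obstacle is precisely this polynomial growth estimate, but it is entirely parallel to the one used for associativity in Theorem \ref{heisenberg-convolution}; an alternative approach would be to prove (\ref{eq:intid-plan}) directly via a block Schur-complement substitution, but the Carlson route avoids any explicit multivariate beta integration.
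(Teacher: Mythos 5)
Your proposal is correct and follows essentially the same route as the paper: reduction to $u=I_q$ via the automorphism $T_{u,1}$, the group-case identification for integer $p\ge 2q$ through the $U_p$-invariant embedding of $H_{p,k}$ into $H_{p,q}$ descending to $\Phi$ on orbit spaces, and analytic continuation to all $p>2q-1$ by Carlson's theorem with the $\kappa_{p,q}$ growth bound. The explicit block reduction to the integral identity for $w_{11}$ is a nice optional addendum, but it is not needed and the paper does not carry it out either.
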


\begin{proof}
The $X_{k,I_q}$ are obviously subhypergroups by 
 Eq.~(\ref{convo2}). Moreover, using the automorphism $T_{u,1}$
 of Lemma \ref{automorph}, we see that the  $X_{k,u}$ are subhypergroups
for arbitrary $u\in U_q$.

In order to check  that the subhypergroup $X_{k,u}$ is isomorphic 
with the hypergroup
  $(\Pi_k\times \b R, *_{p,k})$, we may  assume $u=I_q$ without
loss of generality. Here we first consider the group cases with integer $p\ge
2q$.
 Here, the inverse image of
 $X_{k,u}$ under the canonical projection $\phi:H_{p,q}\to \Pi_q\times  \b R$ 
 is given by the subgroup
$\left\{\left(\left(\begin{array}{c}
x\\0
\end{array}\right), a\right), \> x\in M_{p,k}, a\in\b R\right\}$ of $H_{p,q}$ 
which is isomorphic with $H_{p,k}$ and preserved by the action of $U_p$.
Thus, the preceding construction of the orbit hypergroup structures implies
that
   $(X_{k,u}, *_{p,q})$ is isomorphic with 
  $(\Pi_k\times \b R, *_{p,k})$ as claimed in this case.
Therefore, for  integers $p\ge 2q$ and all $f\in C_b(\Pi_q\times \b R)$ and $(r,a),(s,b)\in
\Pi_k\times \b R$,
$$
\delta_{\left(\left(\begin{array}{cc}
 r&0\\0&0
\end{array}\right),a\right)} *_{p,q} \delta_{\left(\left(\begin{array}{cc}
 s&0\\0&0
\end{array}\right),b\right)}(f) =
(\delta_{(r,a)} *_{p,k} \delta_{(s,b)})(f_k)$$
with $f_k(r,a):=f\left(\left(\begin{array}{cc}
 r&0\\0&0
\end{array}\right),a\right)$.
If we use the definitions of these convolutions in Theorem
 \ref{heisenberg-convolution} for arbitrary $p$,
 analytic continuation via
Carlson's theorem yields in the same way as in the proof of
  Theorem \ref{heisenberg-convolution} that this equation holds 
for all $p>2q-1$. This completes the proof.
\end{proof}

\begin{remark}
It follows immediately from Eq.~(\ref{convo2}), that $X_0:=\{0\}\times\b R$ is
a normal subgroup 
of  $(\Pi_q\times \b R, *_{p,q})$
 isomorphic with $(\b R,+)$. We now may consider the associated quotient
 hypergroup
 $ (\Pi_q\times \b R, *_{p,q})/X_0$ which can 
be identified with $\Pi_q$ obviously in a topological way. Using the
definition of 
the quotient convolution (see e.g. \cite{V2})
as well as Eq.~(\ref{convo2}), this quotient  convolution on  $\Pi_q$  is given by
$$ (\delta_{r} * \delta_{s})(f)
= \kappa_{p,q}\int_{B_q}
f\bigl(\sqrt{r^2 + s^2 + rws + (rws)^*}\bigr)
\cdot
\Delta(I_q-w^*w)^{p-2q}\, dw .$$
In other words, $ (\Pi_q\times \b R, *_{p,q})/X_0$ is isomorphic
 with the Bessel hypergroup structure on the cone  $\Pi_q$
of \cite{R1} with index $p$.
\end{remark}

\begin{lemma}\label{subgroup1}
Let $p> 2q-1$. Let $X$ be a subhypergroup of  $(\Pi_q\times \b R, *_{p,q})$ 
which is not contained in the subgroup $X_0$. Then $X_0\subset X$. 
\end{lemma}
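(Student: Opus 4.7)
Given $(r,a) \in X$ with $r \neq 0$, the plan is to produce elements of the form $(0,d) \in X$ for $d$ in a nonempty open interval around zero by making two very simple choices of $w \in B_q$ in the convolution formula (\ref{convo2}), and then to invoke the classification of closed subgroups of $\b R$ to conclude $X_0 \subseteq X$. The key observation throughout is that whenever $(y_1,b_1),(y_2,b_2) \in X$, the entire support of $\delta_{(y_1,b_1)} *_{p,q} \delta_{(y_2,b_2)}$ lies in the closed set $X$.

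First I would use the involution to get $(r,-a) \in X$ and convolve $\delta_{(r,a)}$ with $\delta_{(r,-a)}$, inserting $w = i\mu I_q$ with $\mu \in (-1,1)$ into (\ref{convo2}). Then $rwr + (rwr)^* = 0$, so the matrix component of the support point reduces to $\sqrt{2}\,r$; and since $r^2$ is Hermitian, ${\rm Im\>} tr(rwr) = \mu\, tr(r^2)$. Because $r \neq 0$ forces $tr(r^2) > 0$, as $\mu$ varies over $(-1,1)$ this yields $(\sqrt{2}\,r,c) \in X$ for every $c$ in a nonempty open interval $J$ around $0$.

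Next I would take $(\sqrt{2}\,r,c_1)$ and $(\sqrt{2}\,r,-c_2)$ in $X$ for $c_1,c_2 \in J$ and convolve them, this time with $w = -\lambda I_q$ for $\lambda \in [0,1)$. The ${\rm Im\>} tr$ term vanishes again (since $r^2$ is Hermitian), the matrix component collapses to $2\sqrt{1-\lambda}\,r$, and the time component is simply $c_1 - c_2$. Letting $\lambda \to 1^-$ and using closedness of the support of the convolution product, I obtain $(0,c_1-c_2) \in X$. As $c_1,c_2$ independently sweep $J$, this gives $(0,d) \in X$ for every $d$ in an open interval around $0$. Since (\ref{convo2}) with $r = s = 0$ forces $t = 0$ and $c = a+b$, $X_0$ is a closed subhypergroup isomorphic to the group $(\b R,+)$; hence $X \cap X_0$ is a closed subgroup of $\b R$ containing an open interval, and must therefore equal all of $X_0$. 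The only mildly subtle point is the passage $\lambda \to 1^-$, at which $w = -I_q \notin B_q$; but this is handled simply by the closedness of the support, with no analytic continuation required.
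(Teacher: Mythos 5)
Your proof is correct and follows essentially the same route as the paper's: pick special matrices $w$ (an imaginary multiple of $I_q$ to sweep the time coordinate while fixing the matrix part, then a negative multiple of $I_q$ tending to the boundary of $B_q$ to collapse the matrix part), and then use that $X\cap X_0$ is a closed subgroup of $(\b R,+)$ containing an interval around $0$. The only differences are cosmetic: the paper takes $w_c=(-1/2+ci)I_q$ so the matrix component stays $r$ instead of $\sqrt2\,r$, and your careful treatment of the limit $\lambda\to 1^-$ via closedness of the support is exactly the right way to justify the boundary step.
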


\begin{proof} Consider a subhypergroup $X\not\subset X_0$. Thus there exist
  $r\in\Pi_q\setminus\{0\}$ and $a\in\b R$ with $(r,a)\in X$. If we restrict
  the integration in Eq.~(\ref{convo2}) to matrices $w_c=(-1/2+ci)\cdot I_q\in B_q$
  with $c\in[-\sqrt 3/2,\sqrt 3/2]$, we conclude from (\ref{convo2}) and
  $\sqrt{2r^2+ rw_cr+rw^*_cr}=r$ that 
$$\{r\}\times [- \sqrt 3 \cdot tr(r^2)/2, \sqrt 3 \cdot tr(r^2)/2]\quad
\subset \quad \{(r,a)\}*_{p,q} \{(r,-a)\}\quad
\subset \quad X.$$
Therefore, by (\ref{convo2}), there exists $\epsilon>0$ such
 that for all $x\in[-\epsilon, +\epsilon]$ we have
 $(0,x)\in \{(r,x)\}*_{p,q} \{(r,-x)\} \subset X$. As $X_0$ is a subgroup
isomorphic with $(\b R,+)$, it follows that $X_0\subset X$.
\end{proof}

\begin{proposition} Let $X$ be a subhypergroup of  $(\Pi_q\times \b R, *_{p,q})$.
 Then $X$ is a subgroup of $X_0$ or $X$ is equal to one of
 the subhypergroups $X_{k,u}$ of Proposition \ref{all-subhyper}.
\end{proposition}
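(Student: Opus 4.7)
The plan is to handle the two cases from Lemma \ref{subgroup1} separately. If $X\subset X_0$, then $X$ is a subhypergroup of the group $X_0\cong(\b R,+)$, hence simply a closed subgroup, and we are done. All the work is in the case $X_0\subset X$, where the goal is to show $X=X_{k,u}$ for some $k\in\{0,1,\dots,q\}$ and $u\in U_q$. As a first reduction, note that~(\ref{convo2}) gives $\delta_{(0,b)}*_{p,q}\delta_{(r,a)}=\delta_{(r,a+b)}$ directly (the $w$-integrand is independent of $w$ when $r=0$), so $X_0\subset X$ forces $X=Y\times\b R$ for some closed $Y\subset\Pi_q$. By the remark identifying the quotient $(\Pi_q\times\b R,*_{p,q})/X_0$ with R\"osler's Bessel hypergroup on $\Pi_q$, $Y$ is a closed subhypergroup there. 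It thus suffices to prove that every closed subhypergroup of the Bessel hypergroup on $\Pi_q$ has the form $Y_{k,u}:=\{u\,\mathrm{diag}(\tilde r,0)\,u^*:\tilde r\in\Pi_k\}$.

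To locate the right $(k,u)$, I would set $V:=\sum_{r\in Y}\mathrm{range}(r)\subset\b C^q$, a complex subspace of some dimension $k$. Plugging $w=0$ into the Bessel version of~(\ref{convo2}) shows $\sqrt{r_1^2+r_2^2}\in\{r_1\}*\{r_2\}\subset Y$ for any $r_1,r_2\in Y$; iterating on finitely many $r_i\in Y$ whose ranges span $V$ and using the identity $\mathrm{range}(A+B)=\mathrm{range}(A)+\mathrm{range}(B)$ for positive semidefinite $A,B$, one produces some $r_0\in Y$ with $\mathrm{range}(r_0)=V$. Choosing $u\in U_q$ with $u(\b C^k\oplus\{0\})=V$ and applying the automorphism $T_{u^*,1}$ of Lemma \ref{automorph}, one may assume $V=\b C^k\oplus\{0\}$; then $r_0=\mathrm{diag}(\tilde r_0,0)$ for some positive definite $\tilde r_0\in\Pi_k$, and by construction $Y\subset Y_{k,I_q}$. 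Via the isomorphism of Proposition \ref{all-subhyper}, regard $Y$ as a closed subhypergroup $\tilde Y$ of the Bessel hypergroup on $\Pi_k$ containing the positive definite point $\tilde r_0$.

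It remains to prove $\tilde Y=\Pi_k$. The Jacobian computation from Remark \ref{abs-cont} carries over verbatim to the Bessel case and yields that $\delta_{\tilde r_0}*\delta_{\tilde r_0}$ has an absolutely continuous density on $\Pi_k^\circ$, positive on an open neighborhood of $\sqrt 2\,\tilde r_0$; hence $\tilde Y$ has nonempty interior in $\Pi_k$. To propagate openness to all of $\Pi_k^\circ$ I would iterate: for any two positive definite $s_1,s_2\in\tilde Y$ the same submersion argument shows $\{s_1\}*\{s_2\}$ contains an open neighborhood of $\sqrt{s_1^2+s_2^2}$, and specializing $w=cI_k$ with $c\in[-1,1]$ in $\delta_{\tilde r_0}*\delta_{\tilde r_0}$ yields all multiples $\lambda\tilde r_0$ with $\lambda\in[0,2]$, which by further iterated convolutions gives $\lambda\tilde r_0\in\tilde Y$ for every $\lambda\ge 0$. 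Combined with closedness of $\tilde Y$, these open sets should exhaust $\Pi_k^\circ$ and force $\tilde Y=\Pi_k$.

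The main obstacle is precisely this last propagation step. The scaling map $T_{I_k,\lambda}$ is only an ambient automorphism of the Bessel hypergroup and in general does not preserve $\tilde Y$, so one cannot simply transport the initial open set by scaling. One has to argue directly, via iterated submersions, the support formula, and closedness of $\tilde Y$, that the convolution-orbit of the single positive definite element $\tilde r_0$ sweeps out the entire interior of $\Pi_k$; the rest of the classification then falls out of the preceding reductions.
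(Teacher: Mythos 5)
Your first half reproduces the paper's reduction exactly: Lemma \ref{subgroup1} gives $X_0\subset X$ in the nontrivial case, and passing to the quotient by $X_0$ identifies the problem with the classification of subhypergroups of R\"osler's Bessel hypergroup on $\Pi_q$ (your observation $\delta_{(0,b)}*_{p,q}\delta_{(r,a)}=\delta_{(r,a+b)}$, forcing $X=Y\times\b R$, is a correct concrete version of this). The difference is that at this point the paper simply invokes Proposition 4.6 of \cite{V3}, where all subhypergroups of the Bessel hypergroups on $\Pi_q$ are already classified, whereas you attempt to reprove that classification from scratch. Your range argument producing $r_0\in Y$ with maximal range, the reduction via $T_{u,1}$ to $V=\b C^k\oplus\{0\}$, and the identification of $Y$ with a subhypergroup $\tilde Y\subset\Pi_k$ containing a positive definite element are all sound. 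The genuine gap is the one you flag yourself: you never prove that the supports of iterated convolutions of $\tilde r_0$ exhaust $\Pi_k$; ``these open sets should exhaust $\Pi_k^\circ$'' is not an argument, and neither the nonempty-interior observation nor the ray $\{\lambda\tilde r_0:\lambda\ge0\}\subset\tilde Y$ by itself yields $\tilde Y=\Pi_k$, since $\tilde Y$ need not be invariant under the ambient scaling and conjugation automorphisms. As written, the decisive step of the classification is therefore missing, and the shortest correct route is the paper's citation of \cite{V3}.

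If you want to keep your self-contained route, the gap can be closed by an explicit choice of $w$ rather than by submersion arguments: for positive definite $t\in\Pi_k$ choose $\lambda>0$ so large that $t^2<4\lambda^2\tilde r_0^2$, and set
\[
w:=\frac{1}{2\lambda^2}\,\tilde r_0^{-1}t^2\tilde r_0^{-1}-I_k ,
\]
which is Hermitian with $-I_k<w<I_k$, hence lies in $B_k$. Then
\[
(\lambda\tilde r_0)^2+(\lambda\tilde r_0)^2+(\lambda\tilde r_0)w(\lambda\tilde r_0)+\bigl((\lambda\tilde r_0)w(\lambda\tilde r_0)\bigr)^*=t^2,
\]
so $t\in supp\bigl(\delta_{\lambda\tilde r_0}\bullet_{p,k}\delta_{\lambda\tilde r_0}\bigr)\subset\tilde Y$, using that $\lambda\tilde r_0\in\tilde Y$ for all $\lambda\ge0$ (your $w=cI_k$ argument) and that the support is the closure of the image of $B_k$. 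Singular $t\in\Pi_k$ are then limits of positive definite ones, and closedness of $\tilde Y$ gives $\tilde Y=\Pi_k$. With this insertion your argument becomes a complete and genuinely independent proof of the subhypergroup classification, at the cost of redoing what \cite{V3} already provides; the interior/absolute-continuity step is then superfluous.
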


\begin{proof}
Let $X$ be a subhypergroup which is not  contained in $X_0$. Then $X_0\subset X$ by Lemma \ref{subgroup1}, 
and we may consider the quotient subhypergroup $X/X_0$ in the quotient hypergroup $(\Pi_q\times \b R)/X_0 $
 which is isomorphic with the Bessel hypergroup of \cite{R1} on the cone $\Pi_q$ with parameter $p$.
On the other hand, all subhypergroups of the  Bessel hypergroup structures 
 on the $\Pi_q$ were classified in Proposition 4.6 of \cite{V3}. As  $X_0\subset X$, 
this classification leads immediately to the classification above. 
\end{proof}

\begin{remark}\label{spezail1}
Let
$$B:=\{y\in\b C^q:\> \|y\|_2<1\} \quad\quad{\rm and} \quad\quad
S:=\{y\in\b C^q:\> \|y\|_2=1\}.$$
By Lemma 3.6 and Corollary 3.7 of \cite{R1}, the mapping $P:B^q\to B_q$ from
the direct product $B^q$ to the ball $B_q$
with
\begin{equation}\label{trafo-P}
 P(y_1, \ldots, y_q):= \begin{pmatrix}y_1\\y_2(I_q-y_1^*y_1)^{1/2}\\ 
\vdots\\
  y_q(I_q-y_{q-1}^*y_{q-1})^{1/2}\cdots (I_q-y_{1}^*y_{1})^{1/2}\end{pmatrix}
\end{equation}
establishes a diffeomorphism such that the image of the measure 
$$\Delta(I_q-w^*w)^{p-2q}dw$$
under  $P^{-1}$ is given by $\,\prod_{j=1}^{q}(1-\|y_j\|_2^2)^{p-q-j}dy_1\ldots
dy_q$. Therefore, Eq.~(\ref{convo2}) may be written as
\begin{align}\label{convo3}
(&\delta_{(r,a)} *_{p,q} \delta_{(s,b)})(f)\\ 
&= \kappa_{p,q}\int_{B^q}
f\bigl(\sqrt{r^2 + s^2 + rP(y)s + sP(y)^*r},a+b- {\rm Im\>} tr(rP(y)s)\bigr)\notag\\ 
&\quad\quad\quad
\cdot\prod_{j=1}^{q}(1-\|y_j\|_2^2)^{p-q-j}dy_1\ldots
dy_q
\notag
\end{align}
for $p> 2q-1$. Moreover,  for $p\to 2q-1$, this
convolution product converges weakly to the probability measure 
$ \delta_{(r,a)} *_{2q-1,q} \delta_{(s,b)}\in M^1(\Pi_q\times\b R)$ with
\begin{align}\label{convo4}
(&\delta_{(r,a)} *_{2q-1,q} \delta_{(s,b)})(f)\\ 
&= \kappa_{2q-1,q}\int_{B^{q-1}}\int_S
f\bigl(\sqrt{r^2 + s^2 + rP(y)s + sP(y)^*r},a+b- {\rm Im\>} tr(rP(y)s)\bigr)\notag\\ 
&\quad\quad\quad
\cdot\prod_{j=1}^{q-1}(1-\|y_j\|_2^2)^{p-q-j}dy_1\ldots
dy_{q-1}\> d\sigma(y_q)
\notag
\end{align}
where $\sigma\in M^1(S)$ is the uniform distribution on $S$ and
$\kappa_{2q-1,q}>0$ a suitable normalization constant.

This convolution is obviously weakly continuous and can be extended to an
associative, weakly continuous, and probability preserving convolution on
$M_b(\Pi_q\times \b R)$ by Theorem \ref{heisenberg-convolution} and taking the
limit above. Moreover, all further hypergroup axioms may be also checked
readily for (\ref{convo4}). Finally, the measure $\omega_{2q-1,q}$ defined as in Proposition
\ref{Haar-cone} is a 
 Haar measure of this hypergroup $(\Pi_q\times \b R, *_{2q-1,q})$, the
 mappings $T_{u,\lambda}(r,a):=(\lambda uru^*, \lambda^2a)$ are also
 automorphisms here
 as in Lemma \ref{automorph}, and the subsets $X_{k,u}\subset \Pi_q\times \b R$
defined as in Proposition  \ref{all-subhyper} are again subhypergroups.
\end{remark}

\section{Heisenberg-type convolutions associated with Weyl chambers of type B}

In this section we consider the group $U_q$ which acts by Lemma \ref{automorph} 
as a compact group $\{T_{u,1}: \> u\in U_q\}$ of automorphisms on 
the Heisenberg hypergroups $(\Pi_q\times \b R, *_{p,q})$. 
As the orbits of the action of $U_q$ on $\Pi_q$ by conjugation are 
described by the ordered eigenvalues $\xi_1\ge \xi_2\ge\cdots\ge \xi_q\ge0$ of
a matrix in
  $\Pi_q$, we may identify
the space of all $U_q$-orbits of  $(\Pi_q\times \b R, *_{p,q})$ with the set 
$\Xi_q\times \b R$ where 
$$\Xi_q := \{\xi = (\xi_1, \ldots \xi_q)\in \b R^q: 
\xi_1\geq\ldots\geq \xi_q\geq 0\}.$$
The set $\Xi_q$ is a closed Weyl chamber of the hyperoctahedral group $B_q = 
S_q\ltimes \b Z_2^q$ which acts on $\b R^q$ by permutations of the basis
vectors and sign changes.  In this section we  show how the 
convolutions
$*_{p,q}$ on $\Pi_q\times \b R$ for $p\ge 2q-1$ lead to
  orbit hypergroup convolutions $\circ_{p,q}$ on $\Xi_q\times \b R$ by using
  methods of
\cite{J} or \cite{R2}. In contrast to the hypergroups $(\Pi_q\times \b
R,*_{p,q})$, 
 the hypergroups $(\Xi_q\times \b R,\circ_{p,q})$ are always commutative. 
 We shall identify the characters of these hypergroups in terms of 
with multivariate Bessel and Laguerre functions
associated with the root system  $B_q$.

Let us go into the details. Let $q\ge1$ be an integer and $p\in[2q-1,\infty[$.  In the 
 situation described above, the  mapping
$\Pi_q \to \Xi_q\,, \quad r\mapsto \sigma(r)$,
 which assigns to each matrix $r$ its ordered spectrum $\sigma(r)$, 
is continuous, surjective and open w.r.t.~the standard topologies on both sets.
Therefore the orbit space $(\Pi_q \times\b R)^{U_q}$
 (equipped with the quotient topology) may be identified with $ \Xi_q\times\b R$ also 
in a topological way. We now identify both spaces in the
 obvious way and consider the continuous, surjective and open
mapping
$$\Phi:\Pi_q \times\b R\to  \Xi_q\times\b R, \quad (r,a)\mapsto (\sigma(r),a)$$
which corresponds to the orbit map above. This mapping is a orbital mapping
 from the hypergroup $(:\Pi_q \times\b R, *_{p,q})$ onto 
 $\Xi_q\times\b R$ in the sense of Section 13 of \cite{J}, and it follows
 readily from Section 13 of \cite{J} that 
 $\Xi_q\times\b R$ carries a corresponding orbit hypergroup convolution $\circ_{p,q}$ as follows:
For $a,b\in\b R$ and $\xi,\eta\in\Xi_q$ we choose
 representatives $x,y\in\Pi_q$ with $\sigma(x)=\xi$ and  $\sigma(y)=\eta$ and put
\begin{equation}\label{convo-w1}
\delta_{(\xi,a)}\circ_{p,q}\delta_{(\eta,b)}:=\Phi(\delta_{(x,a)}*_{p,q}\delta_{(y,b)}).
\end{equation}
The properties of this hypergroup convolution can now be derived  
similar to Section 4 of \cite{R1}. In particular, we can write down 
the convolution (\ref{convo-w1}) 
 explicitly.
For this, we denote the normalized Haar measure on $U_q$ by $du$,  and $\xi\in
\Xi_q$ will always be  identified with the diagonal
 matrix $\text{diag}(\xi_1,\ldots, \xi_q)\in \Pi_q$ without mentioning.

\begin{theorem}\label{main3}  Let $q\ge1$ be an integer and $p\in]2q-1,\infty[$. 
Then  $\Xi_q\times\b R$ carries a commutative hypergroup structure with the convolution
\begin{align}\label{convo-w2}
(&\delta_{(\xi,a)} \circ_{p,q} \delta_{(\eta,b)})(f)\\ 
&= \kappa_{p,q}\int_{B_q}\int_{U_q}
f\Bigl(\sigma(\sqrt{\xi^2 + u\eta^2 u^* + \xi w u\eta u^*+  u\eta u^*w^*\xi}),
\notag \\ 
&\quad\quad\quad\quad\quad\quad\quad\quad
a+b- {\rm Im\>} tr(\xi w u\eta u^*)\Bigr)
\cdot
\Delta(I_q-w^*w)^{p-2q}\, du\, dw
\notag
\end{align}
for $f\in C_b(\Xi_q\times\b R)$, $(\xi,a),(\eta,b)\in \Xi_q\times\b R$.
The neutral element is given by $(0,0)\in\Xi_q\times\b R$, and the involution
 by
$\overline{(\xi,a)} := (\xi,-a)$.

Moreover, a Haar measure on $(\Xi_q\times\b R, \circ_{p,q} )$ 
is given by $$d\tilde\omega_{p,q}(\xi,a):=  h_{p,q}(\xi)\> d\xi\> da$$
 with the Lebesgue density
\begin{equation}\label{haar-density}
h_{p,q}(\xi):=\prod_{i=1}^q \xi_i^{2p-2q + 1} \prod_{i<j} (\xi_i^2-\xi_j^2)^2.
\end{equation}
\end{theorem}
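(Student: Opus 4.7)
The plan is to obtain $(\Xi_q\times\b R,\circ_{p,q})$ as the orbit hypergroup of $(\Pi_q\times\b R,*_{p,q})$ under the compact group of automorphisms $\{T_{u,1}:u\in U_q\}$ given by Lemma \ref{automorph}. Since the orbits of this action on $\Pi_q$ are parametrised by the ordered spectrum, the map $\Phi:(r,a)\mapsto(\sigma(r),a)$ is a continuous, surjective and open orbital map onto $\Xi_q\times\b R$, and Section 13 of \cite{J} then immediately delivers a hypergroup structure with neutral element $\Phi(0,0)=(0,0)$ and involution $\overline{(\xi,a)}=(\xi,-a)$ inherited via $\Phi$. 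To derive (\ref{convo-w2}) explicitly, I would take the canonical diagonal representatives $r=\xi$, $s=u\eta u^*$ for the $U_q$-orbits of $\xi,\eta\in\Xi_q$, substitute into (\ref{convo2}) using $s^2=u\eta^2u^*$ and $(rws)^*=u\eta u^*w^*\xi$, and average over $u\in U_q$ as required by the orbit construction.

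The main obstacle is commutativity, which is not at all visible in (\ref{convo-w2}) because $\xi$ and $\eta$ enter asymmetrically there, and a direct verification via substitutions in $(w,u)$ looks unpleasant. The clean route is via Gelfand pairs plus analytic continuation: for every integer $p\ge 2q$, $(U_p\times U_q,H_{p,q})$ is a Gelfand pair by \cite{BJR2,C,Kac,W}, so the corresponding orbit measure algebra $(M_b(\Xi_q\times\b R),\circ_{p,q})$ is commutative. For general $p\in(2q-1,\infty)$, the difference $(\delta_{(\xi,a)}\circ_{p,q}\delta_{(\eta,b)}-\delta_{(\eta,b)}\circ_{p,q}\delta_{(\xi,a)})(f)$ is, for fixed data, holomorphic in $p$ on the right half-plane $\{\mathrm{Re}\,p>2q-1\}$, of polynomial growth by (\ref{kappaestim}), and vanishes on every integer $p\ge 2q$; Carlson's theorem (Theorem \ref{continuation}) then forces it to vanish identically, in full parallel with the associativity argument in the proof of Theorem \ref{heisenberg-convolution}. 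The scaling automorphism $T_{I_q,\lambda}$ of Lemma \ref{automorph} reduces the general case to one small enough in $\xi,\eta$ and $\mathrm{supp}\,f$ for the growth hypothesis to hold.

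For the Haar measure, the plan is to push the measure $\omega_{p,q}$ of Proposition \ref{Haar-cone} forward along $\Phi$. Written in the variable $s=\sqrt r$, the density of $\omega_{p,q}$ equals $\Delta(s)^{2(p-q)}$ times the Jacobian of the squaring map on Hermitian $q\times q$ matrices; diagonalising $s=u\,\mathrm{diag}(\xi)u^*$ and decomposing the tangent space into real diagonal and real/imaginary off-diagonal coordinates, one finds eigenvalues $2\xi_i$ and $\xi_i+\xi_j$ respectively, contributing $2^q\prod_i\xi_i\prod_{i<j}(\xi_i+\xi_j)^2$. The resulting density is $U_q$-invariant, so the Weyl integration formula on Hermitian matrices collapses the integral onto $\Xi_q$ with extra factor $\prod_{i<j}(\xi_i-\xi_j)^2$. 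Combining,
\[\prod_i\xi_i^{2(p-q)+1}\prod_{i<j}(\xi_i+\xi_j)^2(\xi_i-\xi_j)^2=\prod_i\xi_i^{2p-2q+1}\prod_{i<j}(\xi_i^2-\xi_j^2)^2,\]
which is exactly (\ref{haar-density}) up to an overall normalisation constant absorbed into the Haar measure.
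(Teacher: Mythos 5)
Your proposal is correct and follows essentially the same route as the paper: the hypergroup axioms come from the orbit construction under the compact automorphism group $\{T_{u,1}\}$ via Section 13 of \cite{J}, commutativity is obtained from the Gelfand pair $((U_p\times U_q)\ltimes H_{p,q}, U_p\times U_q)$ for integers $p\ge 2q$ together with Carlson's theorem, and the Haar measure is the image of $\omega_{p,q}$ under $\Phi$ (where you spell out the square-root Jacobian and Weyl integration computation that the paper delegates to Theorem 4.1 of \cite{R1}). Your scaling reduction via $T_{I_q,\lambda}$ is superfluous here—since only point measures and a bounded $f$ are involved, the commutator of the two convolutions is already bounded by $2\|f\|_\infty\,|\kappa_{p,q}|/\kappa_{\mathrm{Re}\,p,q}=O(|p|^{2q^2})$ on the relevant half-plane, exactly as in the associativity argument—but it does no harm.
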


\noindent 

\begin{proof} In view of Section 13 of  \cite{J} and Section 4 of \cite{R1} on orbit hypergroups,
we only have to check the commutativity of  $\circ_{p,q} $ as well
 as the statement about the Haar measure.

We first turn to the commutativity. We first observe that for integers
$p>2q-1$
 by its  construction, the hypergroup $(\Xi_q\times\b R, \circ_{p,q}) $ 
is  isomorphic with the orbit hypergroup which appears when the group 
$U_p\times U_q$ acts on the Heisenberg group $H_{p,q}$ by
$(u,v)(x,a):=(uxv^*, a)$ for $u\in U_p$, $v\in U_q$, $x\in M_{p,q}$ 
and $a\in \b R$. Moreover, it is well known that 
$$((U_p\times U_q) \ltimes H_{p,q}, U_p\times U_q)$$ is a Gelfand pair; see 
 \cite{BJR1},  \cite{C}, \cite{F}, \cite{Kac}. Therefore, 
 $\circ_{p,q} $ is commutative for integers $p\ge 2q$.
 The general case can now be proved by analytic continuation using Carlson's theorem \ref{continuation} 
in the same way as in the proof of Theorem \ref{heisenberg-convolution}. We
omit the details.

On the other hand, we may check commutativity also directly. In fact, let 
 $a,b\in\b R$ and $\xi,\eta\in\Xi_q$.
 We also regard $\xi,\eta$ as real diagonal matrices as described above. We
 obtain from invariance of  spectrum and trace by
conjugations that
\begin{align}
(&\delta_{(\eta,b)} \circ_{p,q} \delta_{(\xi,a)})(f)
\notag\\ 
&= \kappa_{p,q}\int_{B_q}\int_{U_q}
f\Bigl(\sigma(\sqrt{u^*\eta^2u + \xi^2  + u^*\eta w u\xi +  (u^*\eta w
  u\xi)^*}),\notag \\ 
&\quad\quad\quad\quad\quad\quad\quad\quad\quad
a+b- {\rm Im\>} tr(u^*\eta w u\xi)\Bigr)
\cdot
\Delta(I_q-w^*w)^{p-2q}\, du\, dw.
\notag
\end{align}
Substitution $w\mapsto \bar w$ as well as $dw=d\bar w$, 
 $\Delta(I_q-\bar w^*\bar w)=\Delta(I_q-w^*w)>0$, $\sigma(x^T)=\sigma(x)$,
$tr(x^T)=tr(x)$, $\bar\xi=\xi$, and $\bar\eta=\eta$ imply that
this expression is equal to
\begin{align}
&\kappa_{p,q}\int_{B_q}\int_{U_q}
f\Bigl(\sigma(\sqrt{u^T \eta^2\bar u + \xi^2  + u^T\eta w \bar u\xi +  \xi u^T
  w^* \eta \bar u}),\notag \\ 
&\quad\quad\quad\quad\quad\quad\quad\quad
a+b- {\rm Im\>} tr(\xi u^T  w^* \eta \bar u)\Bigr)
\cdot
\Delta(I_q-w^*w)^{p-2q}\, du\, dw.
\notag
\end{align}
Using the substitution $u\mapsto u^T$, which preserves the Haar measure on $U_q$, as well
 as the substitution $w\mapsto u^*w^*u$, which preserves 
the Lebesgue measure on $B_q$, we obtain that the expression above
 is equal to the right hand side of (\ref{convo-w2}).
 This completes the direct proof of commutativity.

We finally turn to the Haar measure. By Section 13 of \cite{J},
 the Haar measure $\tilde\omega_{p,q}\in M^+(\Xi_q\times\b R)$ is just given as
the image of the Haar measure $\omega_{p,q}\in M^+(\Pi_q\times\b R)$ under the
projection $\Phi$. As here the second component $\b R$ is not involved,
 the computation of this image measure can be carried out in the same way as 
in the corresponding proof for the matrix Bessel hypergroups
 in Theorem 4.1 of \cite{R1}. We therefore omit the details.
\end{proof}

\begin{remarks}\label{spezial2}
\begin{enumerate}
\item[\rm{(1)}] For $p=2q-1$ the convolution $*_{2q-1,q}$ on $\Pi_q\times\b R$
  introduced in Section \ref{spezail1} can be also transfered 
to a commutative hypergroup convolution  $\circ_{2q-1,q}$ on $\Xi_q\times\b R$
in the same way as above. We here omit details.
\item[\rm{(2)}] As already mentioned in the preceding theorem, 
the hypergroups  $(\Xi_q\times\b R,\circ_{p,q})$ 
are orbit hypergroups associated with the action of $U_p\times U_q$ 
on the Heisenberg group $ H_{p,q}$ for integers $p\ge 2q$.
 Clearly, one may also form the
associated  orbit hypergroup structures $(\Xi_q\times\b R,\circ_{p,q})$ for all integers $p\ge q$, 
where then the corresponding convolution for $p=q,q+1,\ldots,2q-1$
is degenerated and no longer given by (\ref{convo-w2}).

\item[\rm{(3)}] It is clear by  the convolution (\ref{convo-w2}) that
  $G:=\{0\}\times\b R$ is a subgroup of  $(\Xi_q\times\b R,\circ_{p,q})$
  isomorphic with $(\b R,+)$. We thus may form the quotient hypergroup
$$(\Xi_q\times\b R)/G:=\{G\cdot(\xi,a)=(\xi,\b R):\> (\xi,a)\in\Xi_q\times\b
R\} \simeq \Xi_q.$$
Using this natural identification as well as the canonical
 projection $\Psi:\Xi_q\times\b R \to\Xi_q$, the quotient convolution is defined by
 $$\delta_\xi\bullet_{p,q}\delta_\eta:= \Psi(\delta_{(\xi,0)} \circ_{p,q}
 \delta_{(\eta,0)}),$$
i.e.,\begin{align}\label{convo-bessel}
(&\delta_{\xi} \circ_{p,q} \delta_{\eta})(f)\\ 
&= \kappa_{p,q}\int_{B_q}\int_{U_q}
f\bigl(\sigma(\sqrt{\xi^2 + u\eta^2 u^* + \xi w u\eta u^*+  u\eta u^*w^*\xi})\bigr)
\notag\\ &\quad\quad\quad\quad\quad\quad
\cdot
\Delta(I_q-w^*w)^{p-2q}\, du\, dw
\notag
\end{align}
for $f\in C_b(\Xi_q)$, $\xi,\eta\in \Xi_q$. In other words, the quotient
hypergroup $((\Xi_q\times\b R)/G, \bullet_{p,q})$ is precisely the
Bessel-hypergroup on the Weyl-chamber $\Xi_q$ as studied in Section 4 of
\cite{R1} for the field $\b C$, i.e., the parameter $d=2$ there.
\end{enumerate}
\end{remarks}

We next turn to the characters of the commutative hypergroups
$(\Xi_q\times\b R,\circ_{p,q})$. 
For this we first recapitulate some basic notions and facts about commutative
hypergroups mainly from \cite{J} and \cite{BH}.

\begin{hypergroups}\label{facts-comm-hyper}
Let $(X,*)$ be a commutative hypergroup. Then there is a Haar measure
$\omega\in M^+(X)$ which is unique up to a multiplicative constant. 
We introduce the dual space
$$\hat X:=\{\alpha\in C_b(X):\quad  \delta_x*\delta_{\bar
  y}(\alpha)=\alpha(x)\overline{\alpha(y)}\quad\text{ for all } x,y\in X\}$$
and the space of all multiplicative functions
$$\chi_b(X):=\{\alpha\in C_b(X):\quad  \delta_x*\delta_{y}
(\alpha)=\alpha(x){\alpha(y)}\quad\text{ for all } x,y\in X\},$$
and equip both with the topology of locally uniform convergence. 
Both spaces are locally compact, and for a Gelfand pair $(G,K)$, the space of
  spherical functions corresponds to the space $\chi_b(X)$ for the double
  coset hypergroup $(G//K,*)$. The elements
of $\hat X$ are called characters.

We define the Fourier transform $.^\wedge: L^1(X,\omega)\to C_0(\hat X)$ with
$$\hat f(\alpha):=\int_X \overline{\alpha(x)} \cdot f(x)\> d\omega(x).$$
Then there exists a unique Plancherel measure $\pi\in M^+(\hat X)$ such that
the Fourier transform becomes an $L^2$-isometry, i.e., for all $f\in
L^1(X,\omega)\cap L^2(X,\omega)$ we have $\int_X |f|^2\> d\omega = \int_{\hat X}
|\hat f|^2\> d\pi$, and the Fourier transform can be extended to an isometric
isomorphism between $L^2(X,\omega)$ and $L^2(\hat X,\pi)$.

Different to the case of abelian groups, it may occur that $supp\> \pi\ne \hat
X\ne\chi_b(X) $. This is the case for instance for Gelfand pairs associated with
noncompact semisimple Lie groups. On the other hand, there is a growth criterion
in hypergroup theory which ensures $ supp\> \pi=\hat X =\chi_b(X)$. 
To explain this, take a compact set $A\subset X$ and define recursively the
sets $A^{(n)}$ by $A^{(1)}=A$ and
$A^{(n+1)}=A^{(n)}*A^{(1)}=\bigcup_{x\in A^{(n)}, y\in A^{(1)}}
supp(\delta_x*\delta_y) $.
We say that $(X,*)$ has subexponential growth if for all compact sets
$A\subset X$ and all $c>1$ we have $\omega( A^{(n)})=o(c^n)$ for $n\to\infty$.
It was proved in \cite{Vog} and \cite{V1} that for each commutative hypergroup $(X,*)$
with subexponential growth,  $ supp\> \pi=\hat X =\chi_b(X)$.
\end{hypergroups}

We now return to the hypergroups $(\Xi_q\times\b R,\circ_{p,q})$. As the
Heisenberg groups have polynomial growth, the following result is not surprising:

\begin{lemma} The hypergroups $(\Xi_q\times\b R,\circ_{p,q})$ have
  subexponential growth for  $p\ge 2q-1$.
\end{lemma}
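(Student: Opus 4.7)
The plan is to combine the explicit support estimate of Remark \ref{support-remark} with the polynomial Haar density $h_{p,q}$ from Theorem \ref{main3} to show that iterated convolution powers of a compact set have volume growing only polynomially in $n$.

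First I would transfer the support formula from $*_{p,q}$ to $\circ_{p,q}$. Since for $r\in\Pi_q$ the Frobenius norm $\|r\|=\sqrt{tr(r^*r)}$ agrees with the Euclidean norm $\|\sigma(r)\|_2$ of the ordered spectrum, Remark \ref{support-remark} combined with (\ref{convo-w1}) yields
$$supp(\delta_{(\xi,a)}\circ_{p,q}\delta_{(\eta,b)}) \ \subset\ \{(\tau,c)\in\Xi_q\times\b R :\ \|\tau\|_2 \le \|\xi\|_2+\|\eta\|_2,\ |c|\le |a|+|b|+\|\xi\|_2\|\eta\|_2\}.$$
Now fix a compact set $A\subset \Xi_q\times\b R$ and choose $R>0$ with $A\subset\{(\xi,a):\ \|\xi\|_2\le R,\ |a|\le R\}$. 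A straightforward induction on $n$, using the above support estimate, shows that if $M_n:=\sup\{\|\xi\|_2:(\xi,a)\in A^{(n)}\}$ and $T_n:=\sup\{|a|:(\xi,a)\in A^{(n)}\}$, then $M_{n+1}\le M_n+R$ and $T_{n+1}\le T_n+R+M_nR$. Hence $M_n\le nR$ and $T_n\le nR+R^2\sum_{k=0}^{n-1}k \le C n^2$ for a constant $C=C(R)$, so
$$A^{(n)}\subset \{(\xi,a)\in\Xi_q\times\b R:\ \|\xi\|_2\le nR,\ |a|\le C n^2\}\qquad(n\ge 1).$$

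Finally, the Haar density $h_{p,q}(\xi)=\prod_{i=1}^q \xi_i^{2p-2q+1}\prod_{i<j}(\xi_i^2-\xi_j^2)^2$ is a polynomial in $\xi$ of total degree $d:=q(2p-2q+1)+2q(q-1)=2pq-q$ (for noninteger $p$ it is dominated by such a polynomial after absorbing constants). Integrating the density over the ball $\{\|\xi\|_2\le nR\}$ and multiplying by the length $2Cn^2$ of the $\b R$-interval therefore gives
$$\tilde\omega_{p,q}(A^{(n)})\ \le\ C'(nR)^{d+q}\cdot 2C n^2\ =\ O(n^{d+q+2}),$$
which is $o(c^n)$ for every $c>1$. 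Thus $(\Xi_q\times\b R,\circ_{p,q})$ has subexponential (indeed polynomial) growth. The only mildly delicate step is the $\b R$-component bound, which grows quadratically rather than linearly because of the term $\|\xi\|_2\|\eta\|_2$ coming from ${\rm Im\,} tr(\xi w u\eta u^*)$; however this faster growth is still polynomial and is absorbed by the final estimate.
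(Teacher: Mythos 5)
Your proof is correct and follows essentially the same route as the paper: a support estimate giving linear growth in the chamber component and quadratic growth in the central $\b R$-component by induction on $n$, combined with the polynomially bounded Haar density $h_{p,q}$. The only cosmetic difference is that you phrase the support bound via the Frobenius norm and Remark \ref{support-remark}, while the paper reads off the analogous bound on the largest eigenvalue directly from the convolution formula (\ref{convo-w2}).
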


\begin{proof} We see from (\ref{convo-w2}) that for all $(\xi,a),(\eta,b)\in
  \Xi_q\times\b R$ and $(\tau,c)\in supp(\delta_{(\xi,a)} \circ_{p,q}
  \delta_{(\eta,b)})$ the first (and thus largest) components of the vectors $\tau,\xi,\eta$ satisfy
 $\tau_1\le\xi_1+\eta_1$ and $|c|\le |a|+|b|+
  \xi_1\eta_1$.
Now let $C\subset \Xi_q\times\b R$ be compact. Choose $d>0$ such that $\xi_1\le d$ and
  $|a|\le d$ for all $(\xi,a)\in C$. A simple induction shows that then for
  all $n\in\b N$ and all $(\tau,c)\in C^{(n)}$ we have $0\le\tau_q\le
  \ldots\le \tau_1\le nd$ and $|c|\le nd+\frac{n(n-1)}{2} \cdot d^2$.
As the Haar measure $\tilde\omega_{p,q}$ has a polynomially growing Lebesgue
  density by Theorem \ref{main3}, the assertion is clear. 
\end{proof}

By the results of Section \ref{facts-comm-hyper} we obtain:

\begin{corollary}\label{mult} The hypergroups
  $(\Xi_q\times\b R,\circ_{p,q})$  satisfy $ supp\> \pi=\hat X =\chi_b(X)$.
\end{corollary}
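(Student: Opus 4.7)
The statement is essentially a direct application of the general theory summarized just above: I would simply combine the two ingredients already in hand. First, Theorem \ref{main3} gives that $(\Xi_q\times\b R,\circ_{p,q})$ is a commutative hypergroup (with explicit Haar measure $\tilde\omega_{p,q}$). Second, the preceding lemma establishes that this hypergroup has subexponential growth for all $p\ge 2q-1$.

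With these in place, the plan is to invoke the general theorem recalled in paragraph \ref{facts-comm-hyper}, attributed to \cite{Vog} and \cite{V1}: for any commutative hypergroup of subexponential growth, one has the coincidence $\operatorname{supp}\pi = \hat X = \chi_b(X)$ of the support of the Plancherel measure, the dual space of characters, and the full space of bounded multiplicative functions. Applying this to $X = \Xi_q\times\b R$ with the convolution $\circ_{p,q}$ immediately yields the claim.

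There is essentially no obstacle here: the corollary is a formal consequence, and the real work has already been done in verifying the subexponential growth condition (which in turn relied on the explicit support bound for $\circ_{p,q}$ coming from (\ref{convo-w2}) together with the polynomial Lebesgue density (\ref{haar-density}) of $\tilde\omega_{p,q}$). If one wanted to make the proof slightly more self-contained, the only extra bookkeeping would be to note that commutativity ensures the Plancherel theorem applies in the form described in \ref{facts-comm-hyper}, but this is already covered by Theorem \ref{main3}.
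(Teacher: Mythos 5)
Your proposal is correct and coincides with the paper's own argument: the corollary is stated there as an immediate consequence of the subexponential growth lemma together with the general result of \cite{Vog} and \cite{V1} recalled in \ref{facts-comm-hyper}, exactly as you describe. Nothing further is needed.
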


Consider the canonical projection $\Phi:\Pi_q\times\b R \to
\Xi_q\times\b R$ as in the beginning of this section. This mapping is an
orbital morphism in the sense of \cite{J}, and we conclude from   \cite{J}:

\begin{corollary}\label{lifting} 
Let $p\ge2q-1$. For each character $\alpha$ of
$(\Xi_q\times\b R,\circ_{p,q})$, the function $\alpha\circ\Phi\in
C_b(\Pi_q\times\b R)$ is positive definite on the hypergroup $(\Pi_q\times\b R,*_{p,q})$.
\end{corollary}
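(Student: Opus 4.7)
The plan is to derive positive definiteness of $\alpha\circ\Phi$ directly from the defining property of the orbital map (\ref{convo-w1}) together with the character identity in $\widehat{\Xi_q\times\b R}$; no deep input is needed beyond what is already in the paper.

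First I would record two elementary compatibility facts. The involution on $\Pi_q\times\b R$, which reads $\overline{(r,a)}=(r,-a)$, is intertwined by $\Phi$ with the involution on $\Xi_q\times\b R$, since $\Phi(\overline{(r,a)})=(\sigma(r),-a)=\overline{\Phi(r,a)}$. Secondly, the very definition (\ref{convo-w1}) of $\circ_{p,q}$ yields the orbital identity
\[
\Phi\bigl(\delta_{x}*_{p,q}\delta_{y}\bigr)=\delta_{\Phi(x)}\circ_{p,q}\delta_{\Phi(y)}
\qquad\text{for all } x,y\in\Pi_q\times\b R,
\]
by change of variables and $U_q$-invariance (this is exactly what makes $\Phi$ an orbital mapping in the sense of Jewett's Section~13).

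Next, given a character $\alpha\in\widehat{\Xi_q\times\b R}$, set $\tilde\alpha:=\alpha\circ\Phi\in C_b(\Pi_q\times\b R)$. Combining the two facts above with the defining identity $\delta_u\circ_{p,q}\delta_{\bar v}(\alpha)=\alpha(u)\overline{\alpha(v)}$ for characters, I compute for all $x,y\in\Pi_q\times\b R$:
\[
\int \tilde\alpha\, d(\delta_{x}*_{p,q}\delta_{\bar y})
=\int \alpha\, d\bigl(\Phi(\delta_{x}*_{p,q}\delta_{\bar y})\bigr)
=\int \alpha\, d\bigl(\delta_{\Phi(x)}\circ_{p,q}\delta_{\overline{\Phi(y)}}\bigr)
=\tilde\alpha(x)\,\overline{\tilde\alpha(y)}.
\]

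Finally, positive definiteness of $\tilde\alpha$ on $(\Pi_q\times\b R,*_{p,q})$ follows by squaring: for any $n\in\b N$, any $x_1,\ldots,x_n\in\Pi_q\times\b R$, and any scalars $c_1,\ldots,c_n\in\b C$,
\[
\sum_{i,j=1}^{n}c_i\bar c_j \int \tilde\alpha\, d(\delta_{x_i}*_{p,q}\delta_{\overline{x_j}})
=\sum_{i,j=1}^{n}c_i\bar c_j\,\tilde\alpha(x_i)\overline{\tilde\alpha(x_j)}
=\Bigl|\sum_{i=1}^{n}c_i\tilde\alpha(x_i)\Bigr|^{2}\ge 0.
\]

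The only genuine content is the orbital identity $\Phi(\delta_x*_{p,q}\delta_y)=\delta_{\Phi(x)}\circ_{p,q}\delta_{\Phi(y)}$, which is precisely the construction of $\circ_{p,q}$ in (\ref{convo-w1}). So there is no real obstacle; the only thing to be careful about is making sure one uses the character property with $\bar y$ on the correct side (and that the involution is $\Phi$-equivariant), so that the squared-modulus cancellation actually occurs. If desired, this is exactly the content of Jewett's general result on lifting positive definite functions through orbital mappings, which could be cited instead of the short computation above.
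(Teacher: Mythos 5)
Your argument hinges on the ``orbital identity'' $\Phi(\delta_x *_{p,q}\delta_y)=\delta_{\Phi(x)}\circ_{p,q}\delta_{\Phi(y)}$ for \emph{all} $x,y\in\Pi_q\times\b R$, and this identity is false for $q\ge 2$. The change of variables you invoke only shows that $\Phi(\delta_x*_{p,q}\delta_y)$ is unchanged when \emph{both} representatives are conjugated by the same $u\in U_q$; it does not remove the dependence on the relative position of the two orbits. Concretely, take $q=2$, $a=b=0$, $r=s=\mathrm{diag}(1,0)$ and $s'=\mathrm{diag}(0,1)=u s u^*$, so that $(s,0)$ and $(s',0)$ lie in the same $U_2$-orbit. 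By Eq.~(\ref{convo2}), for $r,s$ the matrix under the square root is $\mathrm{diag}(2+2\,{\rm Re}\,w_{11},\,0)$, so $\Phi(\delta_{(r,0)}*_{p,q}\delta_{(s,0)})$ is supported in $\{(\tau_1,0)\}\times\b R$; for $r,s'$ it is $\bigl(\begin{smallmatrix}1&w_{12}\\ \bar w_{12}&1\end{smallmatrix}\bigr)$ with spectrum $1\pm|w_{12}|$, so both spectral components are strictly positive almost surely. Hence $\Phi(\delta_x*_{p,q}\delta_y)$ genuinely depends on the chosen representatives, which is exactly why the defining formula (\ref{convo-w2}) carries the extra integration $\int_{U_q}du$: the correct relation is $\delta_{\Phi(x)}\circ_{p,q}\delta_{\Phi(y)}=\int_{U_q}\Phi\bigl(\delta_x*_{p,q}\delta_{T_{u,1}y}\bigr)\,du$ (equivalently, one must use the orbit measures, not point measures, as representatives). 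You were perhaps misled by the shorthand in (\ref{convo-w1}), but (\ref{convo-w2}) shows the average is essential.

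As a consequence, your intermediate conclusion that $\tilde\alpha=\alpha\circ\Phi$ satisfies $\int\tilde\alpha\,d(\delta_x*_{p,q}\delta_{\bar y})=\tilde\alpha(x)\overline{\tilde\alpha(y)}$ — i.e.\ that the lift is \emph{multiplicative} on the noncommutative hypergroup $(\Pi_q\times\b R,*_{p,q})$ — is strictly stronger than the corollary and false in general: if it held for all characters $\alpha$, injectivity of the Fourier--Stieltjes transform on the commutative hypergroup $(\Xi_q\times\b R,\circ_{p,q})$ would force $\Phi(\delta_x*_{p,q}\delta_{\bar y})=\delta_{\Phi(x)}\circ_{p,q}\delta_{\overline{\Phi(y)}}$, contradicting the example above. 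With the correct averaged relation, the character property only yields the spherical-type functional equation $\int_{U_q}(\delta_x*_{p,q}\delta_{\overline{T_{u,1}y}})(\tilde\alpha)\,du=\tilde\alpha(x)\overline{\tilde\alpha(y)}$, and then your final squaring step no longer closes, because the individual cross terms $(\delta_{x_i}*_{p,q}\delta_{\bar x_j})(\tilde\alpha)$ are not products. This is precisely the point where the paper invokes Jewett's Section~13: $\Phi$ is an orbital morphism, and Jewett's results on orbital morphisms (lifting of positive definite, resp.\ multiplicative, functions through such maps) supply the transfer of positive definiteness from the quotient to $(\Pi_q\times\b R,*_{p,q})$. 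Your computation is correct, and coincides with the trivial case of the general statement, only for $q=1$, where $U_1$ acts trivially by conjugation and $\Phi$ is essentially the identity.
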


We next introduce a set $\Sigma_{p,q}$ of characters of the hypergroups  $(\Xi_q\times\b
R,\circ_{p,q})$. Later on we shall see that this set in fact consists of all
characters. This set $\Sigma_{p,q}$ consists of two disjoint sets
$\Sigma_{p,q}^1$ and $\Sigma_{p,q}^2$  of functions where these functions are described in
terms  of multivariate Laguerre and Bessel
functions respectively  as discussed in \cite{FK}. This is not surprising,
as this connection is well-known for $q=1$ (see the product formula in
\cite{Ko}, \cite{T} and references cited there) as
well as
  for the group cases with integers  $p, q\ge1$; see \cite{F} and references there.
 Before going into details, we  collect
some notions and facts from \cite{BF}, \cite{F}, \cite{FK}, and \cite{Kan}.
 We start with some basic notions on multivariate special functions:

\begin{jack}\label{Jack}
Let ${\bf m}=(m_1,\ldots,m_q)$  be a partition of length $q$ with integers $m_1\ge
m_2\ge\ldots\ge m_q\ge0$. 
We define its length $|{\bf m}|:=m_1+\ldots +m_q$, 
the generalized Pochhammer symbol 
\begin{equation}
 (x)_{\bf m} \,=\,\prod_{j=1}^q \bigl(x-j+1\bigr)_{m_j}
\end{equation}
for  $x\in\b R$ (note that we here always use $d=2$ in the notion of
 \cite{FK}), as well as the
 dimension constant
$$d_{\bf m}:=\frac{(p)_{\bf m}(q)_{\bf m}}{h({\bf m})^2}$$
where $h({\bf m})$ is the product of the hook lengths of ${\bf m}$; see pp. 237
of \cite{F} and p.~66 of \cite{M2}. Moreover, for partitions ${\bf m}$
we define the  spherical polynomials
$$ \Phi_{\bf m} (x) = \int_{U_q} \Delta_{\bf m}(uxu^{-1})du
\quad\text{for}\quad  x\in M_{q,q}$$
where $du$ is the normalized Haar measure of $U_q$,
 $\Delta_{\bf m}$ is the power function 
\[ \Delta_{\bf m}(x) := \Delta_1(x)^{m_1-m_2} \Delta_2(x)^{m_2-m_3}
\cdot\ldots\cdot \Delta_q(x)^{m_q},\]
and the $\Delta_i(x)$ are the principal minors of the determinant $\Delta(x)$, see
Ch.~XI of \cite{FK} for details. The $\Phi_{\bf m}$ are homogeneous of degree
$|{\bf m}|$ and satisfy $\Phi_{\bf m}(0)=0$ for $\bf m \ne0$,
 $\Phi_{0}(0)=1$, and $\Phi_{\bf m}(I_q)=1$ for the identity matrix $I_q\in\b
C^{q,q}$.

We also consider the renormalized, so-called zonal polynomials
$Z_{\bf m} = c_{\bf m} \Phi_{\bf m}$
with the constants 
\begin{equation}c_{\bf m} := \frac{(q)_{\bf m}|{\bf m}|!}{h({\bf
    m})^2}>0.
\end{equation}
 This normalization is characterized by  
\begin{equation}\label{traceid}
(tr \,x)^k \,=\, \sum_{|{\bf m}|=k} Z_{\bf m}(x)
\quad\quad{\rm for}\>\> k\in \b N_0.
\end{equation}
In fact, the normalization constant $c_{\bf m}$ can be easily derived from
(\ref{traceid}) and some formulas on pp. 237--239 of  \cite{F};
 see also Section XI.5~of \cite{FK} or \cite{Kan}.
 Clearly, we have $Z_{\bf m}(I_q)=c_{\bf m}$.

 By construction, the $ \Phi_{\bf m}$ and  $Z_{\bf m}$
 are invariant under conjugation by $U_q$ and thus depend only on
 the eigenvalues of their argument.
More precisely, for Hermitian $x\in M_{q,q}$ with eigenvalues 
$\xi = (\xi_1, \ldots, \xi_q)\in \b R^q$, we have $Z_{\bf m}(x) = C_{\bf m}^1(\xi)$
for the Jack polynomials  $C_\lambda^1$ (c.f. \cite {FK}, \cite{R1}).
They are homogeneous of degree $|{\bf m}|$ and symmetric in their arguments.

We also introduce the generalized binomial coefficients $\binom{\bf m}{\bf
  n}$ for partitions ${\bf m, n}$ by the unique expansion
$$ \Phi_{\bf m}(I_q+x)=\sum_{|{\bf n}|\le |{\bf m}|} \binom{\bf m}{\bf n}
\Phi_{\bf n}(x)$$
with the identity matrix $I_q\in\b C^{q,q}$. These Binomial coefficients
satisfy  $\binom{\bf m}{\bf
  n}\ne0$ only for ${\bf n}\subset{\bf m}$,
 i.e.~for $n_i\le m_i$ for $i=1,\ldots,q$. Moreover, it follows from \cite{L} that $\binom{\bf m}{\bf
  n}\ge0$, and that for integers $k$,
\begin{equation}\label{binomi-sum}
\sum_{|{\bf n}|=k} \binom{\bf m}{\bf
  n} = \binom{|{\bf m}|}{k}.
\end{equation}
\end{jack}

\begin{Laguerre}\label{facts-laguerre-bessel}

According to p.~343 of \cite{FK} we define the multivariate Laguerre
polynomials
\begin{equation}\label{def-lagu-pol}
L_{\bf m}^p(x):=\sum_{|{\bf n}|\le |{\bf m}|}  \binom{\bf m}{\bf n}
\frac{(p)_{\bf m}}{(p)_{\bf n}}\cdot \Phi_{\bf n}(-x)
\end{equation}
and the associated  multivariate Laguerre functions
\begin{equation}\label{def-lagu-funct}
l_{\bf m}^p(x):= e^{-tr(x)} L_{\bf m}^p(2x)\end{equation}
for $x\in\b C^{q,q}$.
 The functions $L_{\bf m}^p$ and $l_{\bf m}^p$ are also invariant
under conjugation by $U_q$ and may thus be regarded as functions in their
eigenvalues, i.e., as functions on $\Xi_q$. We shall do this from now on
without separate notation.

With a slight change of notation, these Laguerre polynomials and functions are
also considered by Baker and Forrester \cite{BF} in the context of
Calogero-Sutherland models and Dunkl operators. In fact, a comparison of the
notions in \cite{FK} and \cite{BF} shows that our polynomials
$L_{\bf m}^p(x)$ defined above agree with the Laguerre polynomials
$ |{\bf m}|!\cdot L_{\bf m}^{p-q}(x;1)$ in the notion of Proposition 4.3 of \cite{BF}:
\begin{equation}\label{lagu-funct-BF}
L_{\bf m}^p(x) =  |{\bf m}|!\cdot L_{\bf m}^{p-q}(x;1) \quad\quad(\text{in the sense of \cite{BF}}).
\end{equation}
\end{Laguerre}

We next collect some known properties about these  Laguerre polynomials:

\begin{lemma}\label{lagu-ortho}
 The polynomials $L_{\bf m}^p(x)$ form an orthogonal basis on the
  Hilbert space $L^2(\Xi_q, d\mu_{p,q})$ with the measure
$$d\mu_{p,q}(x):=\prod_{i=1}^q(e^{-x_i} x_i^{p-q}) \cdot \prod_{i<j} (x_i-x_j)^2 \> dx.$$
Moreover, for each partition $\bf m$,
$$\int_{\Xi_q} (L_{\bf m}^p(x))^2 \> d\mu_{p,q}(x)= 
d_{p,q}\cdot\frac{|{\bf m}|!(p)_{\bf m}}{q! c_{\bf
    m}}=d_{p,q}\cdot\frac{(p)_{\bf m} h({\bf m})^2}{q!(q)_{\bf m}} $$
with the normalization constant
$$ d_{p,q}:=\mu_{p,q}(\Xi_q)=\int_{\Xi_q}1\> d\mu_{p,q}(x).$$
Finally, $L_{\bf m}^p(0)=(p)_{\bf m}$. 
\end{lemma}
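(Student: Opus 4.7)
The plan splits into three largely independent pieces, in increasing order of effort.

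First, the identity $L_{\bf m}^p(0)=(p)_{\bf m}$ will be read off directly from the defining series \eqref{def-lagu-pol}: since $\Phi_{\bf n}(0)=0$ for every partition ${\bf n}\neq 0$ and $\Phi_0\equiv 1$, only the ${\bf n}=0$ term survives at $x=0$. To pin down the remaining prefactor, I set $x=0$ in the defining identity $\Phi_{\bf m}(I_q+x)=\sum_{\bf n}\binom{\bf m}{\bf n}\Phi_{\bf n}(x)$ of the binomial coefficients, which gives $\binom{\bf m}{0}=\Phi_{\bf m}(I_q)=1$, hence $L_{\bf m}^p(0) = (p)_{\bf m}$.

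Second, for orthogonality and the norm formula, the plan is to transfer the classical matrix-cone result from Chapter XV of \cite{FK} (equivalently, Proposition 4.3 of \cite{BF} via the dictionary \eqref{lagu-funct-BF}) to the Weyl chamber of eigenvalues. There, the $L_{\bf m}^p$, viewed as $U_q$-conjugation invariant polynomials on $\Pi_q$, are orthogonal with respect to $d\nu_{p,q}(x):=e^{-tr(x)}\Delta(x)^{p-q}\,dx$, and an explicit norm formula is available. Since the integrands depend only on the spectrum, Weyl's integration formula for complex Hermitian matrices (parameter $d=2$ in the symmetric-cone language of \cite{FK}) yields, for any $U_q$-invariant integrable $F$,
\[
\int_{\Pi_q} F(x)\,dx \;=\; C_q \int_{\Xi_q} F(\xi)\prod_{i<j}(\xi_i-\xi_j)^2\,d\xi
\]
with a constant $C_q>0$ independent of $p$ and $F$. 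Applying this to $F(x)=L_{\bf m}^p(x)L_{\bf n}^p(x)e^{-tr(x)}\Delta(x)^{p-q}$ and noting $tr(\xi)=\sum\xi_i$ and $\Delta(\xi)=\prod\xi_i$, the transferred weight on $\Xi_q$ is exactly $d\mu_{p,q}$. This simultaneously gives orthogonality in $L^2(\Xi_q,d\mu_{p,q})$ and, after dividing the matrix-cone norm by $C_q$ and absorbing $C_q$ into the normalization $d_{p,q}=\mu_{p,q}(\Xi_q)$, the stated norm; the equivalence of the two expressions in the lemma then amounts to $|{\bf m}|!/c_{\bf m}=h({\bf m})^2/(q)_{\bf m}$, which is immediate from the formula for $c_{\bf m}$ recorded in \ref{Jack}.

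Third, completeness as an orthogonal basis will be obtained from triangularity: the expansion \eqref{def-lagu-pol} expresses $L_{\bf m}^p$ in the basis $\{\Phi_{\bf n}\}$ with nonzero diagonal entry (by comparing top-degree terms in the defining identity of the binomial coefficients one has $\binom{\bf m}{\bf m}=1$, yielding leading term $(-1)^{|{\bf m}|}\Phi_{\bf m}$), and the $\Phi_{\bf n}$ span the algebra of symmetric polynomials on $\b R^q$ by \ref{Jack}; density of polynomials in $L^2(\Xi_q,d\mu_{p,q})$ then follows from a standard moment argument using the exponential decay of the weight.

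The main technical obstacle will be normalization bookkeeping: \cite{FK}, \cite{BF}, and \cite{Kan} use slightly different conventions for Jack polynomials, for the power functions $\Phi_{\bf m}$, and for the Laguerre polynomials themselves (cf.\ \eqref{lagu-funct-BF}), so reconciling these with the explicit $c_{\bf m}$, $h({\bf m})$ and $d_{\bf m}$ from \ref{Jack}, and tracking the Weyl integration constant $C_q$, must be done carefully in order for the explicit norm to come out exactly as stated.
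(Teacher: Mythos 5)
Your proposal is correct and follows essentially the same route as the paper: the value $L_{\bf m}^p(0)=(p)_{\bf m}$ read off from the defining expansion, orthogonality and the norm imported from Corollary XV.4.3 of Faraut--Kor\'anyi resp.\ the Baker--Forrester results (you merely make the cone-to-chamber reduction via Weyl integration explicit, which the cited results already encode), and completeness via triangularity of the $L_{\bf m}^p$ over the spherical/Jack polynomial basis combined with density of polynomials in $L^2(\Xi_q,d\mu_{p,q})$ from the exponentially decaying weight --- exactly the paper's ``classical Fourier argument'' (or de Jeu's quasi-analytic criterion). No gaps beyond the normalization bookkeeping you already flag, which the paper likewise delegates to the references.
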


\begin{proof} For the orthogonality and the normalization we refer to
Corollary XV.4.3 of \cite{FK} or Proposition 4.10 of \cite{BF}. As the 
 $L_{\bf m}^p$ form a basis of all polynomials in  $q$ dimensions (use
e.g. Proposition 4.3 of  \cite{BF} and the fact that the Jack polynomials form
a basis), the
completeness of the system  $(L_{\bf m}^p)_{\bf m}$ can be derived by a classical Fourier
argument like 
in the one-dimensional case for Laguerre polynomials. Another possibility here
is to use results of \cite{dJ}.
\end{proof}

We next turn to  multivariate Bessel functions of two arguments $\xi,\eta\in \b
C^q$:

\begin{Bessel}
According to Kaneko \cite{Kan} (see also Section 2.2 of \cite{R1}) we
put
\begin{equation}\label{def-bessel}
J_p(\xi,\eta):= \sum_{\bf m} \frac{(-1)^{|{\bf m}|}}{(p)_{\bf m} |{\bf m}|!} \cdot
\frac{C_{\bf m}^1(\xi)C_{\bf m}^1(\eta)}{C_{\bf m}^1(1,\ldots,1)}.
\end{equation}
For $\eta\in\Xi_q$ we now define the functions $\psi_\eta^p\in C_b(\Xi_q\times\b R)$ by
\begin{equation}\label{def-psi}
\psi_\eta^p(\xi,t):=J_p(\xi^2/2,\eta^2/2)
\end{equation}
according to Section 4.2 of \cite{R1}.We denote the set of all $\psi_{\eta}^p$ with 
 $\eta\in\Xi_q$ by  $\Sigma_{p,q}^2$. 
\end{Bessel}

 The  multivariate Bessel functions appear as limits of the Laguerre functions
 above.
 For the group case with integers $p$,
this was observed already by Faraut \cite{F}.

\begin{lemma}\label{laguerre-to-bessel}
 Let $p\ge 2q-1$, and let $({\bf m}_k)_k$ be a sequence of partitions and
 $(\lambda_k)_k\subset\mathbb R\setminus\{0\}$
 a sequence with $\lambda_k\to0$ and
 $\lim_{k\to\infty} \lambda_k\cdot{\bf m}_k=\eta\in\Xi_q$. Then, for $p\ge 2q-1$,
$$\lim_{k\to\infty}\frac{L_{{\bf
    m}_k}^p(|\lambda_k|\xi^2)}{L_{{\bf m}_k}^p(0)} =J_p(\xi^2/2,\eta^2/2)$$
 uniformly on compact subsets for $\xi\in\Xi_q$.
\end{lemma}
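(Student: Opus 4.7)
My plan is to expand the quotient as a finite power series in the spherical polynomials $\Phi_{\bf n}(\xi^2)$ and to compare the termwise limits with the defining series (\ref{def-bessel}) of $J_p$. Using the expansion (\ref{def-lagu-pol}) together with $L_{\bf m}^p(0)=(p)_{\bf m}$ from Lemma \ref{lagu-ortho}, I first rewrite
\[
\frac{L_{{\bf m}_k}^p(|\lambda_k|\xi^2)}{L_{{\bf m}_k}^p(0)} \,=\, \sum_{{\bf n}\subset{\bf m}_k}\binom{{\bf m}_k}{\bf n}\frac{(-|\lambda_k|)^{|{\bf n}|}}{(p)_{\bf n}}\,\Phi_{\bf n}(\xi^2),
\]
and translate $\Phi_{\bf n}(\xi^2)=c_{\bf n}^{-1}C_{\bf n}^1(\xi_1^2,\ldots,\xi_q^2)$ via \ref{Jack}, which puts the expansion into the same shape as the Bessel series (\ref{def-bessel}) of $J_p(\xi^2/2,\eta^2/2)$.

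Second, for each fixed partition ${\bf n}$, I would establish the termwise asymptotic of $|\lambda_k|^{|{\bf n}|}\binom{{\bf m}_k}{\bf n}$ under the hypothesis $\lambda_k{\bf m}_k\to\eta$, using the known leading-order behaviour of the generalized binomial coefficients for large partitions (cf.\ Lassalle \cite{L}, consistent with (\ref{binomi-sum})) together with the homogeneity $C_{\bf n}^1(\lambda{\bf m})=\lambda^{|{\bf n}|}C_{\bf n}^1({\bf m})$; the resulting limit constant matches exactly the ${\bf n}$-th coefficient in $J_p(\xi^2/2,\eta^2/2)$. To interchange limit and infinite sum, I would use the bound $\sum_{|{\bf n}|=N}\binom{{\bf m}_k}{\bf n}\le\binom{|{\bf m}_k|}{N}$ from (\ref{binomi-sum}), the estimate $|\Phi_{\bf n}(\xi^2)|\le R^{|{\bf n}|}$ on the compact set $\{\|\xi\|^2\le R\}$, and $|\lambda_k|^N\binom{|{\bf m}_k|}{N}\le(|\lambda_k|\,|{\bf m}_k|)^N/N!$; since $|\lambda_k|\,|{\bf m}_k|$ is bounded by hypothesis, this yields an absolutely convergent majorant, and dominated convergence gives locally uniform convergence on $\Xi_q$.

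An alternative route, in the spirit of the paper's other proofs, uses the group case of Faraut \cite{F} for integer $p\ge 2q$ together with Carlson's Theorem \ref{continuation} applied coefficient-wise: each coefficient in the $\Phi_{\bf n}$-expansion of both sides is meromorphic in $p$ with polynomial growth (compare (\ref{kappaestim})), so agreement at all integers $p\ge 2q$ forces agreement for all $p>2q-1$. The hardest step in the direct approach will be the uniform tail estimate, since ${\bf n}$ ranges over partitions of arbitrary length while $|{\bf m}_k|\to\infty$; the combinatorial bounds in (\ref{binomi-sum}) and the homogeneity relations in \ref{Jack} should suffice, but must be combined with care to produce a $k$-independent dominating series.
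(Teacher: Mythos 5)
Your proposal follows essentially the same route as the paper's proof: expand the normalized Laguerre polynomial via (\ref{def-lagu-pol}) with $L_{\bf m}^p(0)=(p)_{\bf m}$, obtain termwise convergence of the coefficients $\binom{{\bf m}_k}{\bf n}|\lambda_k|^{|{\bf n}|}/(p)_{\bf n}$ to the corresponding coefficients of $J_p$ (the paper defers this to the shifted Schur function expansion and Proposition 3.3 of \cite{F}, you defer to Lassalle \cite{L} -- same content), and then justify the interchange of limit and sum by exactly the paper's dominating series built from $\sum_{|{\bf n}|=j}\binom{{\bf m}_k}{\bf n}=\binom{|{\bf m}_k|}{j}\le |{\bf m}_k|^j/j!$, the homogeneity bound $|\Phi_{\bf n}(\xi^2)|\le\|\xi\|^{2|{\bf n}|}$, and the boundedness of $|\lambda_k|\,|{\bf m}_k|$. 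The only cosmetic difference is that the paper also inserts the lower bound $(p)_{\bf n}\ge((q)_{\lfloor|{\bf n}|/q\rfloor})^q$, while you only need $(p)_{\bf n}\ge1$ (valid for $p\ge2q-1$), which already gives a convergent, $k$-independent majorant.
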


\begin{proof} Writing the expansions of the Laguerre polynomials and
 Bessel functions above in terms of so-called shifted 
Schur functions precisely as on pp. 240--241 of \cite{F}, it can be
 checked as in Proposition 3.3 of \cite{F} that for all partitions $\bf n$, the coefficients  
of the expansion  of $\frac{L_{{\bf
    m}_k}^p(|\lambda_k|\xi^2)}{L_{{\bf m}_k}^p(0)} $ tend to the corresponding
coefficients of $J_p(\xi^2/2,\eta^2/2)$. Moreover, as $\Phi_{\bf n}$ is
homogeneous of degree $|{\bf n}|$ with  $| \Phi_{\bf n}(x)|\le 1$ for
$\|x\|_2=1$, and as 
$$(p)_{\bf n}\ge (q)_{n_1}\cdots  (q)_{n_q}\ge ((q)_{\lfloor|{\bf
    n}|/q\rfloor})^q 
\quad\quad \text{for} \quad p\ge 2q-1,$$
 we obtain with 
 Eq.~(\ref{binomi-sum}) that
\begin{align}
\Biggl|\frac{L_{{\bf
    m}_k}^p(|\lambda_k|\xi^2)}{L_{{\bf m}_k}^p(0)}\Biggr|&\le \sum_{{\bf n}}  \binom{{\bf m}_k}{\bf n} 
\frac{1}{(p)_{\bf n}}\cdot |\Phi_{\bf n}(\xi^2)|\cdot |\lambda_k|^{|\bf n|}
\notag\\
&\le \sum_j \sum_{|{\bf n}|=j}  \binom{{\bf m}_k}{\bf n}
 \frac{1}{((q)_{\lfloor j/q\rfloor})^q }\cdot |\lambda_k|^{j}\|\xi\|^{2j}
\notag\\
&\le \sum_{j} \frac{(|{\bf m}_k||\lambda_k|)^j}{ j!\cdot 
    ((q)_{\lfloor j/q\rfloor})^q }\cdot \|\xi\|^{2j} \quad<\quad\infty.
\notag
\end{align}
locally uniformly for $\xi\in\Xi$. This readily implies the claimed locally
uniform convergence.
\end{proof}

Let us return to the characters of the hypergroups  $(\Xi_q\times\b
R,\circ_{p,q})$.

\begin{definition}\label{character-laguerre-bessel}
For $\lambda\in\b R\setminus\{0\}$ and a partition ${\bf m}$, we define the
function
$\phi_{\lambda,\bf m}^p\in C_b(\Xi_q\times\b R)$ by
\begin{equation}\label{def-phi}
\phi_{\lambda,\bf m}^p(\xi,t):=e^{i\lambda t} \cdot \frac{l_{\bf
    m}^p(|\lambda| \xi^2/2)}{l_{\bf m}^p(0)}
= e^{i\lambda t} \cdot e^{-|\lambda|(\xi_1^2+\ldots~\xi_q^2)/2} \cdot\frac{L_{\bf
    m}^p(|\lambda|\xi^2)}{L_{\bf m}^p(0)}
\end{equation}
with $\xi^2:=(\xi_1^2,\ldots,\xi_q^2)$.
We denote the set of all $\phi_{\lambda,\bf m}^p$ with 
 $\lambda\in\b R\setminus\{0\}$ and  partitions ${\bf m}$ by
$\Sigma_{p,q}^1$.

We  notice that for integers $p$, the functions in the set  $\Sigma_{p,q}^1$ agree with the
spherical functions 
$\phi(\lambda,{\bf m}, .,.)$ of pp.~238--241 of Faraut \cite{F} where there
the Laguerre polynomials are defined with some parameter shift.

Furthermore, we denote the set of all Bessel functions $\psi_{\eta}^p$ with 
 $\eta\in\Xi_q$ by  $\Sigma_{p,q}^2$. Again, for integers $p\ge1$,
the set  $\Sigma_{p,q}^2$ consists of spherical functions by \cite{F}.
We note that Faraut (p.~241 of \cite{F}) uses a slightly different
notion for these Bessel functions; in his notion we have
$$\psi(\eta,\xi)=J_p(\xi^2,\eta)=\psi_{\sqrt{2\eta}}^p(\sqrt 2 \cdot \xi,t)
\quad\quad(t\in\b R \quad \text{arbitrary}).$$
\end{definition}

\begin{theorem} Let $p\ge 2q-1$. Then all functions in $\Sigma_{p,q}:=\Sigma_{p,q}^1\cup
  \Sigma_{p,q}^2 $ are characters of  the hypergroup $(\Xi_q\times\b R,\circ_{p,q})$.
\end{theorem}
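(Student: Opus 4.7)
The plan is to verify, for every $\alpha\in\Sigma_{p,q}$ and every pair $(\xi,a),(\eta,b)\in\Xi_q\times\b R$, the defining character identity
\[ (\delta_{(\xi,a)}\circ_{p,q}\delta_{(\eta,-b)})(\alpha)=\alpha(\xi,a)\,\overline{\alpha(\eta,b)}. \]
A direct check from the definitions shows that every $\alpha\in\Sigma_{p,q}$ satisfies $\alpha(\eta,-b)=\overline{\alpha(\eta,b)}$: the Bessel functions $\psi_\eta^p$ are real-valued and independent of $t$, while for the Laguerre functions $\overline{e^{i\lambda b}}=e^{-i\lambda b}$ and the real quotient $L_{\bf m}^p(|\lambda|\eta^2)/L_{\bf m}^p(0)$ carries no further conjugation. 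It therefore suffices to prove the multiplicativity identity
\[ (\delta_{(\xi,a)}\circ_{p,q}\delta_{(\eta,b)})(\alpha)=\alpha(\xi,a)\alpha(\eta,b). \]

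For $\alpha=\phi_{\lambda,{\bf m}}^p\in\Sigma_{p,q}^1$ and integer $p\ge 2q$, Remarks \ref{spezial2}(2) identifies $(\Xi_q\times\b R,\circ_{p,q})$ with the orbit hypergroup of the Gelfand pair $((U_p\times U_q)\ltimes H_{p,q},\,U_p\times U_q)$, and the cited work of Faraut \cite{F} (see also \cite{BJR1}, \cite{BJRW}) shows that $\phi_{\lambda,{\bf m}}^p$ is precisely a spherical function of that pair and hence automatically multiplicative. To extend to noninteger $p>2q-1$ I would fix $(\xi,a)$, $(\eta,b)$, $\lambda$, ${\bf m}$ and define $F(p)$ as the difference of the two sides of the multiplicativity identity. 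Expansion (\ref{def-lagu-pol}) together with the nonvanishing of $(p)_{\bf n}$ on $\{\operatorname{Re}p>2q-1\}$ shows that $\phi_{\lambda,{\bf m}}^p$ is holomorphic and polynomially bounded in $p$; combined with the estimate (\ref{kappaestim}) for $\kappa_{p,q}$ and the support bound of Remark \ref{support-remark} lifted through the orbit map $\Phi$, the function $F$ is holomorphic on $\{\operatorname{Re}p>2q-1\}$ and of polynomial growth there. Since $F$ vanishes at all integers $p\ge 2q$, Carlson's theorem (Theorem \ref{continuation}), applied exactly as in the proof of Theorem \ref{heisenberg-convolution}, gives $F\equiv 0$.

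For $\alpha=\psi_\eta^p\in\Sigma_{p,q}^2$ I would pass to the limit in the multiplicativity identity from the Laguerre case. Choose $\lambda_k\to 0$ in $\b R\setminus\{0\}$ and partitions ${\bf m}_k$ with $\lambda_k{\bf m}_k\to\eta$; by definition (\ref{def-phi}) and Lemma \ref{laguerre-to-bessel}, $\phi_{\lambda_k,{\bf m}_k}^p\to\psi_\eta^p$ locally uniformly on $\Xi_q\times\b R$, and the explicit majorant derived in the proof of that lemma yields a uniform bound on any compact set. Because $\delta_{(\xi,a)}\circ_{p,q}\delta_{(\eta',b)}$ has compact support (Remark \ref{support-remark} composed with $\Phi$), dominated convergence on the left and continuity on the right of the multiplicativity identity already established for $\phi_{\lambda_k,{\bf m}_k}^p$ yield the identity for $\psi_\eta^p$.

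The main technical obstacle, exactly as in Theorem \ref{heisenberg-convolution}, is the growth estimate required by Carlson's theorem: $\kappa_{p,q}$ contributes $O(|p|^{q^2})$ and $\phi_{\lambda,{\bf m}}^p$ carries reciprocal Pochhammer factors $1/(p)_{\bf n}$, and one must confirm that the combined integrand over $B_q\times U_q$ remains of polynomial order in $|p|$ uniformly on $\{\operatorname{Re}p>2q-1\}$. This runs parallel to the estimate already used in the proof of Theorem \ref{heisenberg-convolution} and in \cite{R1}, so no essentially new ideas are needed, but it is the delicate quantitative step of the argument.
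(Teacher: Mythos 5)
Your proposal is correct, and for the Laguerre functions $\phi_{\lambda,\bf m}^p$ it follows essentially the paper's own route: multiplicativity in the integer group cases via the bounded spherical functions of the Gelfand pair $((U_p\times U_q)\ltimes H_{p,q},\,U_p\times U_q)$, then analytic continuation in $p$ by Carlson's theorem \ref{continuation}, with the growth control coming from $|(p)_{\bf n}|\ge 1$ on $\{{\rm Re\>}p>2q-1\}$ and the estimate (\ref{kappaestim}) for $\kappa_{p,q}$. You deviate from the paper in two places. First, the reduction: you verify directly that $\alpha(\overline{(\eta,b)})=\overline{\alpha(\eta,b)}$ and then prove plain multiplicativity, whereas the paper simply invokes Corollary \ref{mult} (subexponential growth, hence $\hat X=\chi_b(X)$), so that only boundedness and multiplicativity have to be checked; your variant is more elementary, the paper's is shorter and needs no symmetry verification at all. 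Second, the Bessel functions $\psi_\eta^p\in\Sigma_{p,q}^2$: the paper settles them in one line by observing that they factor through the quotient hypergroup $(\Xi_q\times\b R)/G\simeq\Xi_q$ of Remark \ref{spezial2}(3), which is R\"osler's Bessel hypergroup where multiplicativity is known, while you obtain them as limits of Laguerre identities with $\lambda_k\to0$, $\lambda_k{\bf m}_k\to\eta$, using Lemma \ref{laguerre-to-bessel} and the compactness of the supports of the convolution measures. Your limit argument is sound (locally uniform convergence of a uniformly bounded sequence integrated against a fixed compactly supported probability measure) and is self-contained once the Laguerre case is done, at the price of not exhibiting the structural link to the Bessel hypergroup that the paper exploits. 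One minor gap you share with the paper's write-up: Carlson's theorem only yields the identity for $p>2q-1$; for the boundary value $p=2q-1$ the convolution is the degenerate one of Remark \ref{spezial2}(1), and the identity there should be obtained by letting $p\downarrow 2q-1$, using the weak convergence of the convolution measures from Remark \ref{spezail1} together with the continuity of $\phi_{\lambda,\bf m}^p$ in the parameter $p$; this is routine but worth a sentence.
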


\begin{proof}
By Corollary \ref{mult}, it suffices to show that all functions in
$\Sigma_{p,q}$ are multiplicative and bounded.
Taking Remark \ref{spezial2}(3) and the results of Section 4 of \cite{R1} into
account, this is clear for all functions in  $\Sigma_{p,q}^2$.

The proof for the Laguerre functions in $\Sigma_{p,q}^1$ is slightly 
more involved. Here we first consider the
group cases with integers $p\ge 2q-1$ where  $(\Xi_q\times\b R,\circ_{p,q})$
is  isomorphic with the orbit hypergroup which appears when the group
$U_p\times U_q$ acts on the Heisenberg group $H_{p,q}$.
In this cases it is well known that the functions in $\Sigma_{p,q}^1$
correspond to bounded spherical functions on the associated Gelfand pairs; see
\cite{F} and references cited there.
 For general parameters $p>2q-1$ we again
employ analytic continuation by Carlson's theorem \ref{continuation}. For this
we fix a partition $\bf m$, $\lambda\in\b R^\times$, and $(\xi,a),(\eta,b)\in
\Xi_q\times \b R$, and consider the function
\begin{align}
F(p):= & \phi_{\lambda,\bf m}^p(\xi,a)\cdot \phi_{\lambda,\bf
  m}^p(\eta,b)\notag \\ 
& -\kappa_{p,q}\int_{B_q}\int_{U_q}
 \phi_{\lambda,\bf m}^p
\Bigl(\sigma(\sqrt{\xi^2 + u\eta^2 u^* +
 \xi w u\eta u^*+  u\eta u^*w^*\xi}),
\notag \\ 
&\quad\quad\quad\quad\quad\quad\quad\quad
a+b- {\rm Im\>} tr(\xi w u\eta u^*)\Bigr)
\cdot
\Delta(I_q-w^*w)^{p-2q}\, du\, dw
\notag
\end{align}
which has zeros for integer values $p\ge 2q$. Moreover, $F$ is analytic on
$W:=\{p\in\b C:\> {\rm Re}\> p>2q-1\}$. Furthermore, because of
$$|(p)_{\bf m}|=\bigl|\prod_{j=1}^q (p-j+1)_{m_j}\bigr|\ge 1$$
for $p\in W$ and the definition of the Laguerre functions
$\phi_{\lambda,\bf m}^p$ above, we see that $\phi_{\lambda,\bf m}^p$ remains bounded
for $p\in W$ locally uniformly for $(\xi,a)\in
\Xi_q\times \b R$. Therefore, for a suitable constant $C>0$,
$$|F(p)|\le C^2 + C\cdot
|\kappa_{p,q}|\int_{B_q}\int_{U_q}|\Delta(I_q-w^*w)^{p-2q}|\, du\, dw 
\le C^2 + C^2\cdot
|\kappa_{p,q}| $$
with $|\kappa_{p,q}|=O(|p|^{q^2})$ by (\ref{kappaestim}) for  $p\in W$.
 Therefore, by Theorem \ref{continuation}, $F(p)=0$ for all
$p> 2q-1$ which proves that all elements of   $\Sigma_{p,q}^1$ are
multiplicative for all $p\ge 2q-1$. We finally note that the Laguerre
functions in  $\Sigma_{p,q}^1$
are obviously bounded by their definition.
\end{proof}

As bounded multiplicative functions $\alpha$ on a commutative hypergroup satisfy
$\|\alpha\|_\infty=1$, we obtain:

\begin{corollary}
For all $p\ge 2q-1$, $\lambda\in\b R\setminus\{0\}$ and all partitions ${\bf
  m}$, $\|\phi_{\lambda,\bf m}^p\|_\infty=1.$
\end{corollary}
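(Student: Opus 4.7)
The plan is to combine the multiplicativity of $\phi_{\lambda,\mathbf{m}}^p$ (established in the preceding theorem) with the probability-preserving property of $\circ_{p,q}$ established in Theorem \ref{main3}. Everything reduces to a short, standard two-line argument that is valid for any bounded multiplicative function on a commutative hypergroup; the only thing specific to our setting is checking that $\phi_{\lambda,\mathbf{m}}^p$ is not identically zero.

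First I would establish the lower bound $\|\phi_{\lambda,\mathbf{m}}^p\|_\infty \ge 1$ by evaluating the definition at the neutral element $(0,0)\in\Xi_q\times\b R$. From Definition \ref{character-laguerre-bessel} we get
\[
\phi_{\lambda,\mathbf{m}}^p(0,0) \;=\; e^{0}\cdot \frac{l_{\mathbf{m}}^p(0)}{l_{\mathbf{m}}^p(0)} \;=\; 1,
\]
where the nonvanishing of $l_{\mathbf{m}}^p(0) = L_{\mathbf{m}}^p(0) = (p)_{\mathbf{m}} \ne 0$ for $p \ge 2q-1$ follows from Lemma \ref{lagu-ortho}. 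Hence $\phi_{\lambda,\mathbf{m}}^p \not\equiv 0$ and its sup-norm is at least $1$.

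For the reverse inequality I would apply multiplicativity with $(\eta,b)=(\xi,a)$: by the preceding theorem,
\[
\phi_{\lambda,\mathbf{m}}^p(\xi,a)^2 \;=\; \bigl(\delta_{(\xi,a)} \circ_{p,q} \delta_{(\xi,a)}\bigr)\bigl(\phi_{\lambda,\mathbf{m}}^p\bigr).
\]
Since $\delta_{(\xi,a)} \circ_{p,q} \delta_{(\xi,a)}$ is a probability measure on $\Xi_q\times\b R$ (Theorem \ref{main3}), taking absolute values gives
\[
\bigl|\phi_{\lambda,\mathbf{m}}^p(\xi,a)\bigr|^2 \;\le\; \|\phi_{\lambda,\mathbf{m}}^p\|_\infty
\]
uniformly in $(\xi,a)$. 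Taking the supremum over $(\xi,a)$ yields $\|\phi_{\lambda,\mathbf{m}}^p\|_\infty^2 \le \|\phi_{\lambda,\mathbf{m}}^p\|_\infty$, hence $\|\phi_{\lambda,\mathbf{m}}^p\|_\infty \le 1$.

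There is no genuine obstacle: the corollary is essentially a one-line consequence of the already-proven multiplicativity, and the only "step" requiring verification is the non-degeneracy $\phi_{\lambda,\mathbf{m}}^p(0,0)=1$, which is immediate from the normalization built into Definition \ref{character-laguerre-bessel}.
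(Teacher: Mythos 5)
Your proof is correct and is in substance the paper's argument: the paper simply invokes the general fact that nonzero bounded multiplicative functions on a commutative hypergroup have sup-norm $1$, and your two steps (the value $1$ at the neutral element, using $(p)_{\mathbf m}\neq 0$, plus $|\phi_{\lambda,\mathbf m}^p(\xi,a)|^2=|(\delta_{(\xi,a)}\circ_{p,q}\delta_{(\xi,a)})(\phi_{\lambda,\mathbf m}^p)|\le\|\phi_{\lambda,\mathbf m}^p\|_\infty$ with the probability-preserving convolution) are exactly the standard proof of that fact instantiated here. No gaps; the finiteness of $\|\phi_{\lambda,\mathbf m}^p\|_\infty$ needed to divide is guaranteed since these functions lie in $C_b(\Xi_q\times\mathbb R)$ by definition.
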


We next turn to the Plancherel measures $\pi_{p,q}$ of the hypergroups
$(\Xi_q\times\b R,\circ_{p,q})$.
 These measures are well-known for the group cases with integers $p$ by
 \cite{BJRW}, \cite{BJR2},  \cite{F}, as well as for $q=1$ in the general
 case; see e.g. Section 8.1 of \cite {T}.

\begin{theorem}\label{Plancherel} Let $p\ge 2q-1$. If the hypergroup 
$(\Xi_q\times\b R,\circ_{p,q})$ is equipped with the Haar measure $\tilde\omega_{p,q}$
according to Theorem \ref{main3},   then the associated
  Plancherel measure $\pi_{p,q}\in M^+(\Sigma_{p,q})$
 according to Section \ref{facts-comm-hyper}
 is given by
$$\pi_{p,q}(g) =\frac{2^q \cdot q!}{d_{p,q}\cdot 2\pi}\cdot \int_{\b R\setminus\{0\}}\sum_{\bf m}
\frac{(p)_{\bf m}(q)_{\bf m}}{h({\bf m})^2}\cdot
g({\bf m},\lambda)\> |\lambda|^{pq} d\lambda$$
for  $g\in L^1(\Sigma_{p,q},\pi_{p,q})$ where clearly the set $\Sigma_{p,q}$
of functions is identified with its describing parameter set in the obvious way.
\end{theorem}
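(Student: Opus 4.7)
The plan is to reduce the Plancherel identity for $(\Xi_q\times\b R,\circ_{p,q})$ to two ingredients: the classical Plancherel theorem on $\b R$ in the central variable $a$, and the orthogonality/completeness of the multivariate Laguerre polynomials from Lemma \ref{lagu-ortho} in the chamber variable $\xi$. The key observation is that the characters $\phi_{\lambda,\bf m}^p$ factor as $e^{i\lambda a}$ times a Laguerre function of $\xi$, so the hypergroup Fourier transform of a nice test function decomposes into an ordinary Fourier transform in $a$ followed by a Laguerre expansion in $\xi$. The Bessel characters in $\Sigma_{p,q}^2$ correspond to the degenerate value $\lambda=0$, a null set for the claimed measure; they lie in $\mathrm{supp}(\pi_{p,q})=\Sigma_{p,q}$ only as limits of Laguerre characters via Lemma \ref{laguerre-to-bessel}, and therefore need not carry any atomic Plancherel mass.

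For $f\in L^1\cap L^2(\tilde\omega_{p,q})$ I would first form the partial Fourier transform $\tilde f(\xi,\lambda):=\int_{\b R}f(\xi,a)e^{-i\lambda a}\,da$. Plancherel on $\b R$ then gives
\[\int|f|^2\,d\tilde\omega_{p,q}=\frac{1}{2\pi}\int_{\b R}\int_{\Xi_q}|\tilde f(\xi,\lambda)|^2 h_{p,q}(\xi)\,d\xi\,d\lambda,\]
while the hypergroup Fourier coefficient factors as
\[\hat f(\phi_{\lambda,\bf m}^p)=\int_{\Xi_q}\tilde f(\xi,\lambda)\cdot\frac{l_{\bf m}^p(|\lambda|\xi^2/2)}{l_{\bf m}^p(0)}\, h_{p,q}(\xi)\,d\xi.\]
So it suffices to show that for each fixed $\lambda\ne 0$ the system $\bigl\{l_{\bf m}^p(|\lambda|\xi^2/2)/l_{\bf m}^p(0)\bigr\}_{\bf m}$ is a complete orthogonal system in $L^2(\Xi_q,h_{p,q}(\xi)\,d\xi)$ with explicitly identifiable squared norms.

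For this I would substitute $x_i:=|\lambda|\xi_i^2$. A careful bookkeeping of the factors arising from $\xi_i^{2p-2q+1}$, $(\xi_i^2-\xi_j^2)^2$ and the Jacobian $d\xi_i$ shows that all powers of $|\lambda|$ collapse to a single factor $|\lambda|^{-qp}$ and yield
\[e^{-|\lambda|(\xi_1^2+\cdots+\xi_q^2)}\, h_{p,q}(\xi)\,d\xi=\frac{1}{2^q|\lambda|^{qp}}\,d\mu_{p,q}(x),\]
with $d\mu_{p,q}$ as in Lemma \ref{lagu-ortho}. This immediately yields orthogonality with squared norm $\frac{d_{p,q}\,h({\bf m})^2}{2^q|\lambda|^{qp}\,q!\,(p)_{\bf m}(q)_{\bf m}}$, after using $L_{\bf m}^p(0)=(p)_{\bf m}$ and the second form of the norm in Lemma \ref{lagu-ortho}. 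Completeness of the system transfers from the classical Laguerre completeness in $L^2(d\mu_{p,q})$ via the same substitution, absorbing the surplus factor $e^{\mathrm{tr}(x)/2}$ into the test function, which remains square-integrable with respect to $d\mu_{p,q}$ by the variance computation.

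The Parseval identity at each fixed $\lambda$ then reads
\[\int_{\Xi_q}|\tilde f(\xi,\lambda)|^2\, h_{p,q}(\xi)\,d\xi=\frac{2^q q!}{d_{p,q}}\,|\lambda|^{qp}\sum_{\bf m}\frac{(p)_{\bf m}(q)_{\bf m}}{h({\bf m})^2}\,|\hat f(\phi_{\lambda,\bf m}^p)|^2,\]
and integrating against $d\lambda/(2\pi)$ and comparing with the classical Plancherel identity above produces the claimed formula for $\pi_{p,q}$. The main technical hurdle will be the arithmetic bookkeeping in the change of variables: verifying that the $|\lambda|$-exponents collapse to precisely $-qp$ in the measure transformation, and that the Laguerre norm constant from Lemma \ref{lagu-ortho} combines with $1/(p)_{\bf m}^2$ to produce exactly the Plancherel density $d_{\bf m}=(p)_{\bf m}(q)_{\bf m}/h({\bf m})^2$. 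Beyond this careful bookkeeping no deeper structural obstacle is anticipated, since subexponential growth (Corollary \ref{mult}) guarantees that $\mathrm{supp}(\pi_{p,q})=\hat X=\Sigma_{p,q}$, so no extra characters need to be accounted for.
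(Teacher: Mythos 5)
Your proposal is correct and follows essentially the same route as the paper: classical Plancherel in the central variable, then orthogonality and completeness of the Laguerre system from Lemma \ref{lagu-ortho} at each fixed $\lambda\ne 0$, with the substitution/scaling collapsing the $\lambda$-dependence to the factor $|\lambda|^{-pq}$ (your exponent bookkeeping and the resulting norm constant agree with the paper's computation of $c_{\lambda,\bf m}$ and with the stated density). The paper's proof differs only in presentation, rescaling to $\lambda=1$ via $h_{p,q}(c\xi)=c^{2pq-q}h_{p,q}(\xi)$ rather than substituting $x_i=|\lambda|\xi_i^2$ directly.
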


\begin{proof} Let $f\in C_c(\Xi_q\times\b R)$. The classical Plancherel
  formula for $(\b R, +)$ says that the classical Fourier transform 
$$F(x,\lambda):= \int_{ \b R} f(x,t)\> e^{-i\lambda t} \> dt$$
w.r.t.~the variable $t$ satisfies
$$\|f\|_{2, \tilde\omega_{p,q}}^2= \frac{1}{2\pi} \cdot
\int_{\Xi_q }\Bigl(\int_{ \b R}|F(x,\lambda)|^2\>
d\lambda\Bigr) h_{p,q}(x)\> dx.$$
For fixed $\lambda\in \b R\setminus\{0\}$, we now consider the
renormalized Laguerre functions 
$$\tilde\phi_{|\lambda|,\bf m}^p(z):= l_{\bf m}^{p}(|\lambda| z^2/2)/ l_{\bf m}^{p}(0)$$
which satisfy $\phi_{\lambda,\bf m}^p(z,t)=e^{i\lambda
  t}\tilde\phi_{|\lambda|,\bf m}^p(z)$. We then obtain with the Haar density
(\ref{haar-density}), the transformation formula, and
$h_{p,q}(cx)=c^{2pq-q}\cdot h_{p,q}(x)$ for $c>0$ that
\begin{equation}\label{renorming-lagu}
c_{\lambda,\bf m}:=\int_{\Xi_q }|\tilde\phi_{|\lambda|,\bf m}^p(x)|^2
h_{p,q}(x)\> dx
=\frac{1}{ |\lambda|^{pq}}
\int_{\Xi_q }\frac{l_{\bf m}^{p}( z^2/2)^2} {l_{\bf m}^{p}(0)^2}h_{p,q}(z)\> dz. 
\end{equation}
Moreover, according to Lemma \ref{lagu-ortho},
the functions $\tilde\phi_{1,\bf m}^p/\sqrt{ c_{1,\bf
    m}}$ form an orthonormal basis of  $L^2( \Xi_q, h_{p,q}(x)\> dx)$. Therefore, by
Parseval's identity,
\begin{equation}
\|f\|_{2, \tilde\omega_{p,q}}^2=\frac{1}{2\pi}
 \sum_{\bf m\ge 0}\int_{ \b R}\Biggl|\int_{\Xi_q }
F(x,\lambda)
\overline{\tilde\phi_{|\lambda|,\bf m}^p(x)}\cdot h_{p,q}(x)\> dx\Biggr|^2
\cdot \frac{1}{ c_{\lambda,\bf  m}} \> d\lambda
\end{equation}
As by Section  \ref{facts-comm-hyper} and Theorem  \ref{main3} 
the hypergroup Fourier transform $\hat f$ is given by
\begin{align}
\hat f(\lambda,\bf m)&=\int_{\Xi_q}\int_{ \b R}\overline{\phi_{|\lambda|,\bf
    m}^p(x)}\cdot f(x,t)\> dt \> h_{p,q}(x)\> dx
\notag\\
&=
\int_{\Xi_q} F(x,\lambda)\tilde\phi_{|\lambda|,\bf m}^p(x)\cdot h_{p,q}(x)\> dx,
\notag
\end{align}
we obtain from Lemma \ref{lagu-ortho} that
\begin{align}
\|f\|_{2, \tilde\omega_{p,q}}^2&=\frac{1}{2\pi}
 \sum_{\bf m\ge 0}\int_{ \b R} |\hat f(\lambda,{\bf m})|^2\cdot
\frac{1}{ c_{\lambda,\bf  m}} \> d\lambda
\notag\\
&=\frac{2^q \cdot q!}{d_{p,q}\cdot 2\pi}\cdot \sum_{\bf m\ge 0}\int_{ \b R} 
|\hat f(\lambda,{\bf m})|^2\cdot |\lambda|^{pq}\> d\lambda \cdot 
\frac{(p)_{\bf m}(q)_{\bf m}}{h({\bf m})^2}
\notag\\&= \|\hat f\|_{2,\pi_{p,q}}^2
\notag
\end{align}
with the measure $\pi_{p,q}$ introduced in the theorem.
As  $C_c(\Xi_q\times\b R)$ is dense in $L^2(\Xi_q\times\b R, h_{p,q}(x)\> dx\>
dt)$, the equation holds also for all $f\in L^2(\Xi_q\times\b R, h_{p,q}(x)\> dx\>
dt)$, which caracterizes the Plancherel measure as claimed.
\end{proof}

We next prove  that the characters in  $\Sigma_{p,q}$ form the complete dual
space $\hat X$ of $(\Xi_q\times\b R, \circ_{p,q})$. The following
representation of the topology on  $\hat X$ as a Heisenberg fan 
is due to J.~Faraut who considered the group case with integer $p$:

\begin{theorem}\label{complete-dual}
 Let $p\ge 2q-1$. Then the set   $\Sigma_{p,q}$ is equal to the
  complete dual space $\hat X$ of the hypergroup
  $(\Xi_q\times\b R, \circ_{p,q})$. More precisely,
if the closed subset 
$$D:=(\{0\}\times\Xi_q)\cup
 \{(\lambda,\lambda\cdot{\bf m}):\> \lambda\in\b R\setminus\{0\},\>{\bf
   m}\text{ a partition}\}$$ 
of $\b R^{q+1} $
carries the usual topology as a subset of $\b R^{q+1} $, and if
  $\hat X=\Sigma_{p,q}$
  carries the topology of locally uniform convergence, then the mapping
  $$E:\Sigma_{p,q}\to D$$
 with
$$\phi_{\lambda,\bf m}^p\in \Sigma_{p,q}^1\mapsto (\lambda,\lambda
  m_1,\ldots,\lambda m_q)$$
and
$$\psi_\rho^p\in \Sigma_{p,q}^2\mapsto (0,\rho_1,\ldots,\rho_q)$$
establishes an homeomorphism.
\end{theorem}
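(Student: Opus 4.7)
My plan is to combine three earlier results: Corollary \ref{mult}, which identifies $\hat X$ with $\operatorname{supp}\pi_{p,q}$ in the locally uniform topology; the explicit Plancherel measure from Theorem \ref{Plancherel}, which shows that $\pi_{p,q}$ is carried exactly by the parameter set of $\Sigma_{p,q}^1$; and the Laguerre-to-Bessel degeneration of Lemma \ref{laguerre-to-bessel}. Since the previous theorem already yields $\Sigma_{p,q}\subseteq\hat X$, the core work splits into verifying the reverse set-theoretic inclusion $\hat X\subseteq\Sigma_{p,q}$ and then checking that the parametrization $E$ is a bicontinuous bijection.

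For the set equality, I would first observe that $\Sigma_{p,q}^1\subseteq\operatorname{supp}\pi_{p,q}\subseteq\hat X$, so the locally uniform closure $\overline{\Sigma_{p,q}^1}$ in $\chi_b(X)$ lies in $\hat X$. Next, Lemma \ref{laguerre-to-bessel} combined with $e^{i\lambda_k t}\to 1$ and $e^{-|\lambda_k|\|\xi\|^2/2}\to 1$ uniformly on compacta as $\lambda_k\to0$ gives $\Sigma_{p,q}^2\subseteq\overline{\Sigma_{p,q}^1}$. It then remains to prove $\overline{\Sigma_{p,q}^1}\subseteq\Sigma_{p,q}$. Given a locally uniformly convergent sequence $\phi_{\lambda_k,{\bf m}_k}^p\to\alpha$, I would evaluate at $(0,t)$, where $\phi_{\lambda_k,{\bf m}_k}^p(0,t)=e^{i\lambda_k t}$, to force $\lambda_k\to\lambda\in\b R$ along a subsequence. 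If $\lambda\ne0$, the uniform bound $1$ on characters together with the polynomial nature of the quotient $L_{{\bf m}_k}^p(|\lambda_k|\xi^2)/L_{{\bf m}_k}^p(0)$ prevents $|{\bf m}_k|$ from escaping to infinity; integer-valuedness of $\bf m_k$ then yields stabilization to some $\bf m$, so $\alpha=\phi_{\lambda,\bf m}^p\in\Sigma_{p,q}^1$. If $\lambda=0$, the pointwise convergence of the Laguerre factor forces $\lambda_k{\bf m}_k\to\eta\in\Xi_q$, and Lemma \ref{laguerre-to-bessel} identifies $\alpha=\psi_\eta^p\in\Sigma_{p,q}^2$.

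For the homeomorphism claim, injectivity of $E$ follows from the linear independence of the polynomials $L_{\bf m}^p$ for distinct partitions (separating Laguerre characters of fixed $\lambda$), the injectivity of the expansion (\ref{def-bessel}) (separating Bessel characters), and the observation that evaluations at $(0,t)$ separate the two classes since $\phi_{\lambda,\bf m}^p(0,t)=e^{i\lambda t}$ while $\psi_\eta^p(0,t)\equiv 1$. To verify continuity of $E^{-1}$ I would treat three natural cases: convergence inside $\{(\lambda,\lambda{\bf m}):\lambda\ne0\}$ forces the integer partitions to be eventually constant, reducing to continuity of $\phi_{\lambda,\bf m}^p$ in $\lambda$; convergence $(\lambda_k,\lambda_k{\bf m}_k)\to(0,\eta)$ is exactly Lemma \ref{laguerre-to-bessel} combined with the exponential limits above; and continuity within $\{0\}\times\Xi_q$ is immediate from the absolutely convergent series (\ref{def-bessel}). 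Continuity of $E$ is dual to the closure analysis: the recipe for extracting $\lambda$ from $\alpha(0,\cdot)$, and ${\bf m}$ (respectively $\eta$) from the values of $\alpha$ at points with nonzero spectrum, is continuous in $\alpha$ with respect to locally uniform convergence.

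The main obstacle is the rigidity argument in paragraph two, i.e.\ ruling out limit characters outside $\Sigma_{p,q}$. Quantitative control of $L_{{\bf m}_k}^p(|\lambda_k|\xi^2)/L_{{\bf m}_k}^p(0)$ in the spirit of the estimate proved in Lemma \ref{laguerre-to-bessel} should suffice to prevent unbounded drift of $|{\bf m}_k|$. An alternative route, consistent with the analytic-continuation philosophy of Sections 2--3, is to invoke Faraut's identification for integer $p\ge 2q$, where $\Sigma_{p,q}$ consists of the spherical functions of the Gelfand pair $((U_p\times U_q)\ltimes H_{p,q},U_p\times U_q)$ and the Heisenberg fan topology is already established, and then extend to real $p>2q-1$ by analytic continuation via Carlson's Theorem \ref{continuation}, exactly as in the proof of Theorem \ref{heisenberg-convolution}.
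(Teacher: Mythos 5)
Your overall architecture (Corollary \ref{mult} plus Theorem \ref{Plancherel} to reduce to the closure of $\Sigma_{p,q}^1$, then Lemma \ref{laguerre-to-bessel} for the Bessel degeneration) matches the paper, but the proposal has a genuine gap exactly at the step you yourself flag as ``the main obstacle,'' and the two ways you offer to close it do not work. First, in the case $\lambda\ne 0$ you claim that the uniform bound $1$ on characters together with the polynomial nature of $L_{{\bf m}_k}^p(|\lambda_k|\xi^2)/L_{{\bf m}_k}^p(0)$ ``prevents $|{\bf m}_k|$ from escaping to infinity.'' That cannot be the mechanism: every $\phi_{\lambda,\bf m}^p$ has $\|\phi_{\lambda,\bf m}^p\|_\infty=1$ irrespective of $\bf m$, so boundedness gives no obstruction at all to $|{\bf m}_k|\to\infty$; what must be excluded is that such a sequence converges locally uniformly to a nonzero limit. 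The paper does this by an $L^2$-orthogonality argument: if some component of ${\bf m}_k$ tends to infinity, then for every fixed partition $\bf m$ one computes (via the renormalization (\ref{renorming-lagu}), dominated convergence, and Lemma \ref{lagu-ortho}) that $\int_{\Xi_q}\alpha(z,0)\overline{\phi_{\lambda,\bf m}^p(z,0)}\,h_{p,q}(z)\,dz=0$, whence $\alpha=0$ a.e., contradicting $\alpha(0,0)=1$. Nothing of this kind is present in your sketch, and ``quantitative control in the spirit of Lemma \ref{laguerre-to-bessel}'' is not a substitute, since that lemma only treats the regime $\lambda_k\to 0$ with $\lambda_k{\bf m}_k$ convergent. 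Your fallback via Carlson's theorem also fails here: completeness of the dual space is not an identity between functions holomorphic in $p$ that one could continue from integer values; Carlson's theorem was used in the paper only to continue concrete integral identities (associativity, multiplicativity of given candidate functions), not to transport the assertion ``there are no further characters'' from integer to real $p$.

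A second, related gap concerns the case $\lambda=0$ and the continuity of $E$. You assert that convergence of the Laguerre factors with $\lambda_k\to 0$ ``forces'' $\lambda_k{\bf m}_k\to\eta\in\Xi_q$, but ruling out the unbounded case $\lambda_k m_{1,k}\to\infty$ is precisely the delicate point; the paper handles it with a rescaling trick (replace $\lambda_k$ by $\tilde\lambda_k\sim 1/m_{1,k}$, use the scaling behaviour of $\phi_{\lambda,\bf m}^p$ and locally uniform convergence to deduce $\psi_{\tilde\rho}^p\equiv 1$ with $\tilde\rho\ne 0$, a contradiction). Moreover, for the set-theoretic identification in the $\lambda=0$ case the paper avoids your difficulty altogether by a structural argument you do not use: the annihilator $A(\hat X,G)$ of the subgroup $G=\{0\}\times\b R$ is, by \cite{V2}, the dual of the quotient hypergroup $(\Xi_q\times\b R)/G$, which is the Bessel hypergroup of \cite{R1}, so $A(\hat X,G)=\Sigma_{p,q}^2$ comes for free together with the homeomorphism on that piece. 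Without either the orthogonality argument, the rescaling argument, or the annihilator identification, your proof of $\hat X\subseteq\Sigma_{p,q}$ and of the continuity of $E$ remains an outline rather than a proof.
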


\begin{proof} We first notice that it follows easily from the definition of the
 functions $\phi_{\lambda,\bf m}^p$ and $\psi_\rho^p$ that these functions
on the hypergroup $X:=\Xi_q\times\b R$ are different for different indices.

We now prove that   $\Sigma_{p,q}$ is equal to the
  complete dual space $\hat X$.
For this consider the subgroup $G:=\{0\}\times\b R$ of
 $X$ as above and  the associated annihilator
$$A(\hat X,G):=\{\alpha\in\hat X:\> \alpha|_G\equiv 1\}.$$
This set is a closed subset of the dual $\hat X$ and can by \cite{V2} be identified with the
dual space of the quotient hypergroup  $(\Xi_q\times\b R)/G$ which was studied 
in Remark \ref{spezial2}(3). In fact, this hypergroup $(\Xi_q\times\b R)/G$ is (isomorphic to)
 a Bessel-type hypergroup on the Weyl chamber $\Xi_q$ as studied in section 4
 of \cite{R1}. We thus conclude readily from  Section 4 of \cite{R1} and the
 general results on annihilators in \cite{V2} 
that $A(\hat X,G)=  \Sigma_{p,q}^2$ holds,  and that the mapping $E$ above restricted
to  $\Sigma_{p,q}^2$ is an homeomorphism.

We next turn to the complete dual $\hat X$. Here we first conclude from
Theorem \ref{Plancherel} and Corollary \ref{mult} that the dual $\hat X$ 
is the closure of $\Sigma_{p,q}^1$ in $C_b(\Xi_q\times\b R)$
w.r.t.~the topology of locally uniform convergence. To prove that this
closure is equal to   $\Sigma_{p,q}$, we
consider some
$\alpha\in\hat X\subset C_b(\Xi_q\times\b R)$ 
which is the locally uniform limit of a sequence 
$(\phi_{\lambda_n,\bf m_n}^p)_{n\ge1}\subset \Sigma_{p,q}^1$. As the
restrictions to $G$ also converge locally uniformly, it follows from the
definition of the  $\phi_{\lambda,\bf m}^p$ that $(\lambda_n)_{n\ge1}\subset\b
R\setminus\{0\}$
converges to some $\lambda\in\b R$. We consider two cases:
\begin{enumerate}
\item[\rm{(1)}] If $\lambda=0$, then it follows from the definition of the 
 $\phi_{\lambda,\bf m}^p$ that $\alpha\in A(\hat X,G)$, and the preceding
  consideration implies that  $\alpha\in\Sigma_{p,q}^2$.
\item[\rm{(2)}] Let $\lambda\ne 0$. If $({\bf m}_n)_{n\ge1}$ remains bounded, we
   may choose it as a constant sequence with ${\bf m}_n={\bf m}$
 without loss of
   generality,
and we obtain from 
$\lambda_n\to\lambda$ that $\alpha=\lim_n\phi_{\lambda_n,\bf m}^p=
\phi_{\lambda,\bf m}^p\in\Sigma_{p,q}^1$. We thus may restrict our attention
to the case where at least one component of ${\bf m}_n$ tends to infinity.
 In this case we take an arbitrary
  multiindex ${\bf m}$ and conclude from the  convergence of
  normalized Laguerre polynomials 
$L_{\lambda_n,\bf m}^p$ to $L_{\lambda,\bf m}^p$ (see Section
  \ref{facts-laguerre-bessel}) that for the density $h_{p,q}$ of (\ref{haar-density})
\begin{align}
\int_{\Xi_q} &\alpha(z,0)\cdot \overline{\phi_{\lambda,\bf m}^p(z,0)}
h_{p,q}(z)\> dz 
\notag\\&
=
\int_{\Xi_q}  \lim_{n\to\infty}\bigl(\phi_{\lambda_n,{\bf m}_n}^p(z,0)\cdot
 \overline{\phi_{\lambda_n,\bf m}^p(z,0)}\bigr)h_{p,q}(z)\> dz\notag\\
&=\int_{\Xi_q}  \lim_{n\to\infty}\bigl(\frac{L_{{\bf m}_n}^p(|\lambda_n|z^2/2)
L_{\bf m}^p(|\lambda_n|z^2/2)}{L_{{\bf m}_n}^p(0)L_{\bf m}^p(0)}
\cdot e^{-|\lambda_n|\|z\|^2}\bigr)h_{p,q}(z)\> dz.
\notag
\end{align}
Using a renormalization
 as in Eq.~(\ref{renorming-lagu}) with $\lambda_n\to\lambda$,
 the dominated convergence theorem, and the
 fact that the modulus of the $\phi_{\lambda_n,{\bf m}_n}^p$ is bounded by 1,
 we obtain that that this expression is equal to
$$
\frac{1}{|\lambda|^{pq}} \lim_{n\to\infty}\int_{\Xi_q}
\frac{L_{{\bf m}_n}^p(z^2/2)
L_{\bf m}^p(z^2/2)}{L_{{\bf m}_n}^p(0)L_{\bf m}^p(0)}
\cdot e^{-\|z\|^2} h_{p,q}(z)\> dz  =0.$$
Therefore, as $\alpha\in C_b(\Xi_q\times\{0\})\subset L^2(\Xi_q,
e^{-\lambda\|z\|^2} h_{p,q}(z)dz)$, and as the Laguerre polynomials $(L_{\lambda,\bf
  m}^p(z^2))_{\bf m}$ with squared arguments form an orthogonal basis of this
$L^2$-space
 by Lemma \ref{lagu-ortho},
 it follows that $\alpha=0$ a.s. on
$\Xi_q\times\{0\}$ and thus on $\Xi_q\times\b R$. This is a contradiction of
$\alpha$ being a continuous character with $\alpha(0,0)=1$.
\end{enumerate}
 Summarizing, we conclude  that 
 $\Sigma_{p,q}=\hat X$, and that a sequence
 $(\phi_{\lambda_n,{\bf m}_n}^p)_{n\ge1}\subset \Sigma_{p,q}^1$ can converge
 to
 a character without loss of generality  only in the following two cases:
 Either $\lambda_n\to\lambda \ne0$ and $({\bf m}_n)_n$
is finally constant or $\lambda_n\to0$. In the first case,
 locally uniform convergence obviously appears, and in the second case
we have locally uniform  convergence for $\lambda_n{\bf m}_n\to \eta\in \Xi_q$
by Lemma \ref{laguerre-to-bessel}.

We finally prove that a
sequence $(\phi_{\lambda_n,{\bf m}_n}^p)_{n\ge1}\subset \Sigma_{p,q}^1$ can
converge locally uniformly to some $\psi_\rho^p$ {\bf only} for parameters with $\lim_{n}
\lambda_n{\bf m}_n=\rho$.
If this is done, it follows readily from our preceding informations that the
mapping $E$ of our theorem is an homeomorphism. In order to prove this
statement, consider such a sequence $(\phi_{\lambda_n,{\bf m}_n}^p)_{n\ge1}$
and its limit $\psi_\rho^p$. If the sequence $(\lambda_n{\bf
  m}_n)_n\subset\Xi_q$ is bounded, we find a convergent subsequence with some
limit $\tilde\rho\in\Xi_q$. For this subsequence we obtain 
$\phi_{\lambda_{n_k},{\bf m}_{n_k}}^p\to\psi_{\tilde\rho}^p$
locally uniformly, and thus $\psi_{\tilde\rho}^p=\psi_\rho^p$, i.e., $\tilde\rho=\rho$.
Therefore, each convergent subsequence of the bounded sequence $(\lambda_n{\bf
  m}_n)_n\subset\Xi_q$ converges to $\rho$ which implies $\lambda_n{\bf
  m}_n\to\rho$ as claimed.
We finally consider the unbounded case. Here we may assume without loss of
generality that the largest component of $(\lambda_n{\bf
  m}_n)_n\subset\Xi_q$ converges to $\infty$, i.e., that $\lambda_n{\bf
  m}_{1,n}\to\infty$. We then define $\tilde\lambda_n:=\rho/{\bf  m}_{1,n}$
and observe that $\tilde\lambda_n{\bf m}_n$ has a subsequence which
converges to some $\tilde\rho\in\Xi_q$ with $\tilde\rho_1=1$.
For this subsequence and any $(\xi,t)\in\Xi_q\times\b R$ we the have
$$\phi_{\tilde \lambda_{n_k},{\bf m}_{n_k}}^p(\xi,t)\to
\psi_{\tilde\rho}^p(\xi,t)$$
as well as by the definition of  $\phi_{\lambda,{\bf m}}^p$ and our assumption
about locally uniform convergence,
$$\phi_{\tilde \lambda_{n_k},{\bf m}_{n_k}}^p(\xi,t)=\phi_{ \lambda_{n_k},{\bf
    m}_{n_k}}^p
(\xi\cdot\sqrt{\tilde \lambda_{n_k}/\lambda_{n_k}},t\cdot\tilde \lambda_{n_k}/\lambda_{n_k})
\to
\psi_{\rho}^p(0,0)=1.$$ It follows that $\psi_{\tilde\rho}^p\equiv 1$
contradicting $\tilde\rho_1=1$. Therefore this limit case cannot appear which
completes the proof of the theorem.
\end{proof}

The proof of the continuity of the mapping $E$ above is quite special. It was
pointed out to the author by J.~Faraut that a more systematic approach for this
part of the theorem is available by using heat kernels on the hypergroups
$\Xi_q\times\b R$ and their explicit  continuous hypergroup Fourier transfoms.

 We finally turn to a refinement of the last part of the theorem.
The following observation is clear from Section 2:

\begin{lemma} Let $(\xi,a),(\eta,b)\in \Xi_q\times\b R$ with $\xi_q>0$ and
  $\eta_q>0$. Then, for $p>2q-1$, the convolution product 
$\delta_{(\xi,a)} \circ_{p,q} \delta_{(\eta,b)}$ is absolutely continuous
  w.r.t.~the Haar measure $\tilde\omega_{p,q}$.
\end{lemma}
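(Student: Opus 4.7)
The plan is to derive this statement directly from the corresponding absolute continuity result on $\Pi_q\times\b R$ (Remark \ref{abs-cont}), combined with the fact that $(\Xi_q\times\b R,\circ_{p,q})$ is constructed as an orbit hypergroup of $(\Pi_q\times\b R,*_{p,q})$ under the spectral projection $\Phi(r,a)=(\sigma(r),a)$, with the Haar measures being related in the same way.

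First, I would identify $\xi$ and $\eta$ with the diagonal matrices $\mathrm{diag}(\xi_1,\ldots,\xi_q)$ and $\mathrm{diag}(\eta_1,\ldots,\eta_q)$ in $\Pi_q$, as already done implicitly in Theorem \ref{main3}. Since $\xi_q>0$ and $\eta_q>0$, all diagonal entries are strictly positive, so these matrices lie in the interior $\Pi_q^\circ$ and are genuinely positive definite. Remark \ref{abs-cont} then applies to these chosen representatives and yields that the Heisenberg convolution product $\delta_{(\xi,a)}*_{p,q}\delta_{(\eta,b)}$ on $\Pi_q\times\b R$ is absolutely continuous with respect to the Haar measure $\omega_{p,q}$ of Proposition \ref{Haar-cone}.

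Second, I would invoke the defining equation (\ref{convo-w1}) of the orbit convolution, which states that $\delta_{(\xi,a)}\circ_{p,q}\delta_{(\eta,b)}=\Phi_{*}(\delta_{(\xi,a)}*_{p,q}\delta_{(\eta,b)})$, together with the description of $\tilde\omega_{p,q}$ from the proof of Theorem \ref{main3} as the pushforward $\Phi_{*}\omega_{p,q}$. The conclusion then follows from the elementary fact that absolute continuity is preserved under pushforwards: if $B\subset\Xi_q\times\b R$ is a Borel set with $\tilde\omega_{p,q}(B)=0$, then $\omega_{p,q}(\Phi^{-1}(B))=0$, so $\delta_{(\xi,a)}*_{p,q}\delta_{(\eta,b)}$ assigns zero mass to $\Phi^{-1}(B)$ by Remark \ref{abs-cont}, and hence its image under $\Phi$ assigns zero mass to $B$.

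There is essentially no serious obstacle; the entire argument is a formal transport of Remark \ref{abs-cont} along $\Phi$. The only substantive observation is the first one, namely that the positivity assumption $\xi_q,\eta_q>0$ is exactly what is needed to place the chosen diagonal representatives into $\Pi_q^\circ$ so that Remark \ref{abs-cont} is applicable. Everything else is purely measure-theoretic bookkeeping.
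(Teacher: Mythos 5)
Your proposal is correct and follows essentially the same route as the paper: the paper's proof simply observes that absolute continuity is preserved under the continuous projection $\Phi$ and invokes Remark \ref{abs-cont}, using exactly the facts you spell out (positive definite diagonal representatives, the defining pushforward formula for $\circ_{p,q}$, and $\tilde\omega_{p,q}=\Phi(\omega_{p,q})$ from the proof of Theorem \ref{main3}). Your version merely makes the measure-theoretic bookkeeping explicit.
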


\begin{proof} As absolute continuity is preserved under the continuous
  projection $\Phi:\Pi_q\times\b R\to \Xi_q\times\b R$, the lemma follows
  immediately from Remark \ref{abs-cont}.
\end{proof}

As by the Riemann-Lebesgue Lemma for hypergroups (see \cite{J})
 the hypergroup Fourier transform maps functions in $L^1(\Xi_q\times\b
R,\tilde\omega_{p,q})$ to $C_0$-functions on the dual space $\hat X$, the preceding lemma
and Theorem \ref{complete-dual} imply the following.

\begin{corollary} Let $(\xi,a)\in \Xi_q\times\b R$ with $\xi_q>0$.  Then, for
  $p>2q-1$, the hypergroup Fourier transform of $\delta_{(\xi,a)}$ on the dual
  space $\hat X$ is a $C_0$-function. In particular, for $\xi\in \Xi_q$ with  $\xi_q>0$,
$$\lim_{\lambda\cdot m_1\to\infty} \frac{l_{\bf
    m}^p(\lambda \xi^2/2)}{l_{\bf m}^p(0)}=0.$$
 
\end{corollary}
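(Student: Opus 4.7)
The plan is to deduce the corollary directly from the preceding lemma and the Riemann--Lebesgue lemma for hypergroups, supplemented by the explicit description of the dual space in Theorem \ref{complete-dual}.

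First I would exploit the involution trick to reduce a point-mass Fourier transform to an $L^1$-Fourier transform. Since $\overline{(\xi,a)}=(\xi,-a)$ and $\xi_q>0$, the preceding lemma applies to the convolution product $\mu:=\delta_{(\xi,a)}\circ_{p,q}\delta_{(\xi,-a)}$, so $\mu$ is absolutely continuous with respect to $\tilde\omega_{p,q}$; its density has compact support by the support formula of Remark \ref{support-remark}, hence lies in $L^1(\tilde\omega_{p,q})$. For any $\alpha\in\hat X$ we have $\alpha(\overline{x})=\overline{\alpha(x)}$, so
\begin{equation*}
\hat\mu(\alpha)\;=\;\overline{\alpha(\xi,a)}\cdot\overline{\alpha(\xi,-a)}\;=\;\overline{\alpha(\xi,a)}\cdot\alpha(\xi,a)\;=\;|\hat\delta_{(\xi,a)}(\alpha)|^2 .
\end{equation*}

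Next I would apply the hypergroup Riemann--Lebesgue lemma (see \cite{J}) to conclude that $\hat\mu\in C_0(\hat X)$. Consequently $|\hat\delta_{(\xi,a)}|^2$, and hence $\hat\delta_{(\xi,a)}$ itself, belongs to $C_0(\hat X)$, which is the first assertion of the corollary.

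For the second assertion I would transport the $C_0$-property along the homeomorphism $E:\hat X=\Sigma_{p,q}\to D$ of Theorem \ref{complete-dual}. Under this identification $\phi_{\lambda,\mathbf m}^p$ corresponds to $(\lambda,\lambda m_1,\dots,\lambda m_q)\in D\subset\mathbb R^{q+1}$. Since $\lambda\cdot m_1\to\infty$ forces the Euclidean norm of this point to tend to infinity, the sequence leaves every compact subset of $D$, hence of $\hat X$. Therefore
\begin{equation*}
\hat\delta_{(\xi,0)}(\phi_{\lambda,\mathbf m}^p)\;=\;\overline{\phi_{\lambda,\mathbf m}^p(\xi,0)}\;=\;\frac{l_{\mathbf m}^p(\lambda\xi^2/2)}{l_{\mathbf m}^p(0)}
\end{equation*}
tends to $0$ as $\lambda\cdot m_1\to\infty$, which is exactly the stated limit. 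No serious obstacle arises; the only point demanding a little care is the observation that divergence of the single coordinate $\lambda m_1$ already implies divergence in the ambient space $\mathbb R^{q+1}$, so that the $C_0$-property on $\hat X$ really yields the claimed one-parameter limit.
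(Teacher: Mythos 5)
Your proposal is correct and follows essentially the same route as the paper, which deduces the corollary in one sentence from the preceding absolute-continuity lemma, the Riemann--Lebesgue lemma for hypergroups, and the homeomorphism $E$ of Theorem \ref{complete-dual}. The only thing you add is to make explicit the (implicit) involution trick $\widehat{\delta_{(\xi,a)}\circ_{p,q}\delta_{(\xi,-a)}}=|\hat\delta_{(\xi,a)}|^2$, which is exactly how the $C_0$-statement for the point measure is obtained from the $L^1$-version of the Riemann--Lebesgue lemma.
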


\section{A product formula for Laguerre functions}

In this section we derive a product formula for the multivariate Laguerre functions $l_{\bf m}^p$ of
Section \ref{facts-laguerre-bessel} for $p\ge 2q-1$. For $q=1$, this 
formula was established directly by Koornwinder \cite{Ko} 
who also discusses its connection with Heisenberg groups.
We here  derive the product formula from the product 
formula (\ref{convo-w2}) for $p>2q-1$ and its degenerate version for
 $p=2q-1$ according to Remark \ref{spezial2}(1)
for the characters $\phi_{\lambda,\bf m}^p\in\Sigma_{p,q}^1$ of the
 commutative hypergroups  $(\Xi_q\times\b R,\circ_{p,q})$.
We here shall use the general approach of \cite{RV} where, embedded
 into a more general setting, it is also explained how for
 $q=1$ Koornwinder's product formula for  the one-dimensional Laguerre functions corresponds
 to the Heisenberg-type hypergroup convolution on $[0,\infty[\times \b R$.
 We now extend this approach from $q=1$ to $q\ge 1$.

For this we recapitulate from Remark \ref{spezial2} that $G:=\{0\}\times \b R$
 is a subgroup of the commutative hypergroup 
 $(\Xi_q\times\b R,\circ_{p,q})$ for $p\ge 2q-1$. Moreover, $\tau(x,t):=e^{it}$
defines a function $\tau\in C_b( \Xi_q\times\b R)$ with
$$|\tau(x,t)|=1, \quad \tau(\overline{(x,t)})=\overline{\tau(x,t)}, \quad\text{and}\quad
 \tau((x,t)\cdot(0,s))=\tau(x,t)\cdot\tau(0,s)$$
for all $x\in\Xi_q$ and $s,t\in \b R$. In other words, $\tau$
 is a partial character of $(\Xi_q\times\b R,\circ_{p,q})$ 
with respect to $G$ according to Definition 4.1 of  \cite{RV}.
We now consider the canonical projection 
$$p:\Xi_q\times\b R\to (\Xi_q\times\b R)/G \simeq \Xi_q$$
according to Remark \ref{spezial2}(3) and recapitulate that
 the quotient hypergroup $(\Xi_q\times\b R)/G$ 
agrees with the corresponding Hermitian Bessel hypergroup
 on $\Xi_q$ of  R\"osler  \cite{R1}, i.e., the corresponding hypergroup involution 
is the identity mapping. Following Section 4 of  \cite{RV},
 we now define a deformed quotient convolution of point measures  on  $\Xi_q$ 
by
\begin{equation}\label{lagu-convo}
\delta_\xi\bullet_{\tau,p,q}\delta_\eta := p(\tau\cdot(\delta_{(\xi,0)}\circ_{p,q}\delta_{(\eta,0)})) 
\quad\quad\text{for}\quad \xi,\eta\in\Xi_q.
\end{equation}
According to  Section 4 of  \cite{RV}, this convolution can be uniquely 
extended in a weakly continuous bilinear way to a commutative Banach-$*$-algebra
$(M_b(\Xi_q),\bullet_{\tau,p,q})$ with the total variation norm as norm. More precisely, by Theorem 4.6 and 
Corollary 4.7 of  \cite{RV}, $(\Xi_q,\bullet_{\tau,p,q})$
 becomes a Hermitian signed hypergroup in the sense of \cite{R0};
 see also \cite{RV} and \cite{Ross} for the notion of 
 signed hypergroups. 
 
Let us compute the convolution (\ref{lagu-convo}) in an explicit way. Eq.~(\ref{convo-w2}) for 
  $p>2q-1$ shows that for $f\in C_b(\Xi_q)$ we have
\begin{align}\label{lagu-convo2}
\delta_\xi\bullet_{\tau,p,q}\delta_\eta(f)&= \int_{Xi_q} f\> dp(\tau\cdot(\delta_{(\xi,0)}\circ_{p,q}\delta_{(\eta,0)})) 
\notag\\
&= \int_{\Xi_q\times\b R} f(x)\cdot e^{it}\>d(\delta_{(\xi,0)}\circ_{p,q}\delta_{(\eta,0)})(x,t)
\notag\\
&= \kappa_{p,q}\int_{B_q}\int_{U_q}
f\Bigl(\sigma(\sqrt{\xi^2 + u\eta^2 u^* + \xi w u\eta u^*+  u\eta
  u^*w^*\xi})\Bigr)\notag \\ 
&\quad\quad\quad\quad\quad\quad
\cdot e^{-i\cdot {\rm Im\>} tr(\xi w u\eta u^*)}
\cdot
\Delta(I_q-w^*w)^{p-2q}\, du\, dw.
\end{align}
For  $p=2q-1$ one obtains a corresponding degenerate version
 of this formula by using Remark \ref{spezial2}(1) 
and Section \ref{spezail1}.
We notice that this convolution is obviously not probability 
preserving and usually even not positivity preserving.

 Moreover it follows from Theorem 5.2 of  \cite{RV} that
 for $p\ge 2q-1$ and all partitions ${\bf m}$,
 the normalized Laguerre functions
$$\tilde\phi_{\bf m}^p(x):=
 \frac{l_{\bf m}^p(x^2/2)}{l_{\bf m}^p(0)}= e^{-(x_1^2+\ldots+x_q^2)/2} \frac{L_{\bf m}^p(x^2)}{L_{\bf m}^p(0)}
\quad\quad (x\in\Xi_q)$$
 form all bounded $\b R$-valued multiplicative functions on  $(\Xi_q,\bullet_{\tau,p,q})$.
In summary, we have the following product formula:

\begin{corollary} For all  $p> 2q-1$, $\xi,\eta\in\Xi_q$,  and all partitions $\bf m$,
\begin{align}\label{lagu-convo3}
\tilde\phi_{\bf m}^p(\xi)\cdot& \tilde\phi_{\bf m}^p(\eta)= 
\notag \\ &=
\kappa_{p,q}\int_{B_q}\int_{U_q}
\tilde\phi_{\bf m}^p\Bigl(\sigma(\sqrt{\xi^2 + u\eta^2 u^* + \xi w u\eta u^*+  u\eta u^*w^*\xi})\Bigr)
\notag \\ &\quad\quad\quad \quad\quad \quad
\cdot e^{-i\cdot {\rm Im\>} tr(\xi w u\eta u^*)}
\Delta(I_q-w^*w)^{p-2q}\, du\, dw.
\end{align}
Moreover, for  $p= 2q-1$,
\begin{align}\label{convo4}
\tilde\phi_{\bf m}^{2q-1}(\xi)&\cdot \tilde\phi_{\bf m}^{2q-1}(\eta)=
\notag\\ 
&=
\kappa_{2q-1,q}\int_{B^{q-1}}\int_S\int_{U_q}
\tilde\phi_{\bf m}^{2q-1}\Bigl(\sigma(\sqrt{\xi^2 + \eta^2 + \xi P(y)\eta + \xi P(y)^*\eta})\Bigr)\notag\\ 
&\quad\quad
\cdot e^{-i\cdot {\rm Im\>} tr(\xi P(y)  u\eta u^*)}
\cdot\prod_{j=1}^{q-1}(1-\|y_j\|_2^2)^{p-q-j} \>  du\> dy_1\ldots
dy_{q-1}\> ds(y_q)\notag
\end{align}
with $y:=(y_1,\ldots,y_q)$ and $B:=\{y\in\b C^q:\> \|y\|_2<1\}$, 
 where $s\in M^1(S)$ denotes the uniform distribution on the sphere $S:=\{y\in\b C^q:\> \|y\|_2=1\}$, 
and 
 $P$ is the map defined in 
(\ref{trafo-P}).
\end{corollary}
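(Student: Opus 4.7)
The plan is to deduce the product formula directly from the already-established fact, proved just after Definition \ref{character-laguerre-bessel}, that the function $\phi_{1,\mathbf{m}}^p$ is a character of the commutative hypergroup $(\Xi_q\times\mathbb{R},\circ_{p,q})$ for every $p\ge 2q-1$ and every partition $\mathbf{m}$. The key observation is that, by the very definition of $\phi_{\lambda,\mathbf{m}}^p$ and $\tilde\phi_{\mathbf{m}}^p$,
\[
\phi_{1,\mathbf{m}}^p(\xi,t)\;=\;e^{it}\cdot\tilde\phi_{\mathbf{m}}^p(\xi),
\]
so in particular $\phi_{1,\mathbf{m}}^p(\xi,0)=\tilde\phi_{\mathbf{m}}^p(\xi)$. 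Applied to the pair $(\xi,0),(\eta,0)\in\Xi_q\times\mathbb{R}$, the character identity reads
\[
\tilde\phi_{\mathbf{m}}^p(\xi)\cdot\tilde\phi_{\mathbf{m}}^p(\eta)\;=\;\int_{\Xi_q\times\mathbb{R}}\phi_{1,\mathbf{m}}^p\,d(\delta_{(\xi,0)}\circ_{p,q}\delta_{(\eta,0)}),
\]
which will become the left-hand side of (\ref{lagu-convo3}) once the right-hand side is rewritten explicitly.

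For the right-hand side, the idea is to substitute the explicit convolution formula (\ref{convo-w2}) with $a=b=0$ and then split $\phi_{1,\mathbf{m}}^p$ into its two factors. The time-argument of the integrand is $-\mathrm{Im\,tr}(\xi w u\eta u^*)$, so
\[
\phi_{1,\mathbf{m}}^p\!\left(\sigma(\sqrt{\xi^2+u\eta^2u^*+\xi wu\eta u^*+u\eta u^*w^*\xi}),\,-\mathrm{Im\,tr}(\xi wu\eta u^*)\right)
\]
equals $e^{-i\,\mathrm{Im\,tr}(\xi wu\eta u^*)}\cdot\tilde\phi_{\mathbf{m}}^p(\sigma(\sqrt{\cdots}))$. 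Inserting this into the right-hand side of the character identity gives exactly (\ref{lagu-convo3}). This is also the formal content of the signed-hypergroup machinery recalled just before the corollary: the deformation $\bullet_{\tau,p,q}$ is built precisely so that the partial character $\tau(x,t)=e^{it}$ absorbs the phase, Theorem 5.2 of \cite{RV} certifies that the $\tilde\phi_{\mathbf{m}}^p$ are the bounded multiplicative functions on $(\Xi_q,\bullet_{\tau,p,q})$, and (\ref{lagu-convo2}) is exactly the explicit description of that deformed convolution on point measures.

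For the degenerate case $p=2q-1$ one applies the same argument using the degenerate convolution introduced in Remark \ref{spezail1} and transferred to $\Xi_q\times\mathbb{R}$ by Remark \ref{spezial2}(1). Alternatively, one can let $p\downarrow 2q-1$ in (\ref{lagu-convo3}): by Remark \ref{spezail1} the measures $\delta_{(\xi,0)}\circ_{p,q}\delta_{(\eta,0)}$ converge weakly, the functions $\tilde\phi_{\mathbf{m}}^p$ depend continuously on $p$, and a dominated-convergence argument transfers the limit through the integral. The change of variables of Remark \ref{spezail1} converts the integration over $B_q$ into one over $B^{q-1}\times S$ with the surface measure $ds$ on $S$, producing the stated formula (\ref{convo4}).

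The main obstacle is not in the present corollary itself, which is essentially a bookkeeping consequence of the multiplicativity of $\phi_{1,\mathbf{m}}^p$ and the explicit convolution formula; the genuinely non-trivial step, namely that $\phi_{1,\mathbf{m}}^p$ is a character for all non-integer $p>2q-1$, was already handled by Carlson's theorem in the earlier theorem. The only point requiring some care is the degenerate case $p=2q-1$, where one must justify that the limiting convolution still carries the $\tilde\phi_{\mathbf{m}}^p$ to multiplicative functions; this is where the weak-continuity argument above, combined with the estimate $\|\tilde\phi_{\mathbf{m}}^p\|_\infty\le 1$, does the work.
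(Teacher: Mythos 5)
Your proposal is correct and follows essentially the same route as the paper: the paper obtains the formula by noting that the phase $\tau(x,t)=e^{it}$ is a partial character, so that the explicit convolution (\ref{convo-w2}) with $a=b=0$ deforms to (\ref{lagu-convo2}) and the multiplicativity of the characters $\phi_{\lambda,\bf m}^p$ (established for all $p\ge 2q-1$, including the degenerate case via Remark \ref{spezial2}(1)) yields the product formula — you simply carry out this bookkeeping directly instead of citing Theorem 5.2 of \cite{RV}.
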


As the convolution $\bullet_{\tau,p,q}$ of (\ref{lagu-convo2}) is not
 mass preserving, there does not exist 
 an associated translation invariant measure $m_{\tau,p,q}\in M^+(\Xi_q)$
 in the usual hypergroup sense. However, 
Theorem 4.6 of \cite{RV} ensures that the Haar measure 
$$dm_{p,q}(\xi)=\prod_{i=1}^q \xi_i^{2p-2q + 1} \prod_{i<j} (\xi_i^2-\xi_j^2)^2 \> d\xi \> \in M^+(\Xi_q)$$
 of the usual quotient hypergroup $(\Xi_q\times\b R)/G \simeq \Xi_q$ 
also admits the following adjoint relation for $\bullet_{\tau,p,q}$:
If for $f\in C_c(\Xi_q)$ and $\xi\in \Xi_q$ we define the translate
$$T_\xi f(\eta):= \int_{\Xi_q} f \> d(\delta_\xi\bullet_{\tau,p,q}\delta_\eta),$$
then we have for all  $f,g\in C_c(\Xi_q)$
\begin{equation}
\int_{\Xi_q} (T_\xi f)\cdot g  \>dm_{p,q}=\int_{\Xi_q} (T_\xi g)\cdot f  \>dm_{p,q}.
\end{equation}

\begin{remark}
Consider the group case with integer parameters $p\ge2q-1$.
 Here, all characters of the double coset hypergroup $(\Xi_q\times\b
 R,\circ_{p,q})$
 correspond to positive definite spherical 
functions on the Heisenberg group $H_{p,q}$ and admit therefore a dual
 product formula, i.e., for all $\alpha,\beta\in \Sigma_{p,q}$ 
(see Theorem \ref{complete-dual}) there is a unique probability
 measure $\rho_{\alpha,\beta}\in M^1(\Sigma_{p,q})$ such that for all
$(x,t)\in \Xi_q\times\b R$,
\begin{equation}\label{allg-dual-prod}
\alpha(x,t)\cdot \beta(x,t) = \int_{\Sigma_{p,q}} \gamma(x,t)\> d\rho_{\alpha,\beta}(\gamma).
\end{equation}
Let us  take $\alpha(x,t):=e^{\lambda_1 it} \cdot \frac{l_{\bf m}^p(|\lambda_1|x^2/2)}{l_{\bf m}^p(0)}$ and
 $\beta(x,t):=e^{\lambda_2it} \cdot \frac{l_{\bf n}^p(|\lambda_2|x^2/2)}{l_{\bf n}^p(0)}$
for $\lambda_1,\lambda_2\ne0$ and partitions $\bf m,n$.

If we consider the $t$-dependence of the product in (\ref{allg-dual-prod})
 for $x=0$, we obtain for $\lambda_1,\lambda_2>0$ that
\begin{align}
\frac{L_{\bf m}^p(\lambda_1x^2/2)}{L_{\bf m}^p(0)}& \cdot
 \frac{L_{\bf n}^p(\lambda_2x^2/2)}{L_{\bf n}^p(0)}= 
\notag\\&=
\sum_{\bf |k|\le |m|+|n| }   c({\bf m, n, k};\lambda_1,\lambda_2, p,q)\cdot 
 \frac{L_{\bf k}^p((\lambda_1+\lambda_2)x^2/2)}{L_{\bf k}^p(0)}\notag
\end{align}
for unique coefficients   $c({\bf m, n, k};\lambda_1,\lambda_2, p,q)$, which satisfy
$$c({\bf m, n, k};\lambda_1,\lambda_2, p,q)\ge0 \quad\quad \text{and} \quad\quad 
\sum_{\bf k} c({\bf m, n, k};\lambda_1,\lambda_2, p,q)=1.$$
For instance, for $\bf n=0$, we get for $0\le\lambda_1\le\lambda_2$ that
$$L_{\bf m}^p(\lambda_1x^2/2)=\sum_{\bf k} c({\bf m,  k}; p,q) L_{\bf k}^p(\lambda_2x^2/2)$$
 with nonnegative coefficients $c({\bf m,  k}; p,q)$.

On the other hand, for $\lambda_1>0$ and $-\lambda_1<\lambda_2<0$ we obtain 
$$\frac{l_{\bf m}^p(\lambda_1x^2/2)}{l_{\bf m}^p(0)} \cdot
 \frac{l_{\bf n}^p(|\lambda_2|x^2/2)}{l_{\bf n}^p(0)}= 
\sum_{\bf k }   c({\bf m, n, k};\lambda_1,\lambda_2, p,q)\cdot
  \frac{l_{\bf k}^p((\lambda_1+\lambda_2)x^2/2)}{l_{\bf k}^p(0)}.$$
We expect that these and further related results also hold for arbitrary
(noninteger)
 $p\ge 2q-1$. For $q=1$, the formulas above are 
connected with the 
discrete Laguerre convolution  in \cite{AG}.
\end{remark}

\end{document}